
\documentclass[pt11]{article}
\usepackage{latexsym,amsmath,url,caption2,epsfig}
\usepackage{amsfonts,euscript}
\usepackage{graphicx}
\usepackage{amsmath}
\usepackage{amssymb}
\usepackage{amsfonts}
\usepackage{amsthm}
\usepackage{mathrsfs}
\usepackage[all]{xy}
\usepackage{epstopdf}
\usepackage{subfigure}
\usepackage{rotating}
\usepackage{appendix}
\usepackage[english]{babel}
\usepackage{listings}
\usepackage{color}
\usepackage{tikz}
\usepackage{graphics}
\usepackage{rotfloat}
\usepackage{framed}
\usepackage{titlesec}
\usepackage{amsmath}
\usepackage{amssymb}
\usepackage{amsfonts}
\usepackage{amsthm}
\usepackage{mathrsfs}
\usepackage[numbers]{natbib}
\usepackage{sectsty}

\setcounter{MaxMatrixCols}{10}

\RequirePackage{amsthm,amsmath}
\RequirePackage[numbers]{natbib}
\RequirePackage[colorlinks,citecolor=blue,urlcolor=blue]{hyperref}

\newcommand{\be}{\begin{equation}}
\newcommand{\ee}{\end{equation}}
\newcommand{\ba}{\begin{aligned}}
\newcommand{\ea}{\end{aligned}}

\newtheoremstyle{mystyle}
  {}
  {}
  {\itshape}
  {}
  {\bfseries}
  {.}
  { }
  {}
\theoremstyle{mystyle}
\newtheorem{definition}{Definition}

\newtheorem{theorem}{Theorem}

\newtheorem{remark}{Remark}

\newtheorem{proposition}{Proposition}

\textwidth=6.6in
\oddsidemargin=0.0in
\evensidemargin=0.0in
\numberwithin{equation}{section}
\input{tcilatex}
\sectionfont{\centering \mdseries \scshape \large}

\begin{document}

\renewenvironment{proof}[1][\proofname]{{\itshape#1. }}{\qed}

\title{Nonlinear Large Deviations: Beyond the Hypercube}

\author{Jun Yan}
\maketitle





\begin{abstract} By extending Chatterjee and Dembo \cite{NLD}, we present a framework to calculate large deviations for nonlinear functions
of independent random variables supported on compact sets in Banach spaces. Previous
research on nonlinear large deviations has only focused on random variables
supported on $\{-1,+1\}^{n}$, and accordingly we build
theory for random variables with general distributions, increasing flexibility in applications. As examples, we compute the large deviation
rate functions for monochromatic subgraph counts in edge-colored complete graphs, and for triangle counts in dense random graphs with continuous edge weights. Moreover, we verify the mean field approximation
for a class of vector spin models.
\end{abstract}




\section{Introduction}

Large deviations theory for the linear function of i.i.d. random objects has
long been studied, see Dembo and Zeitouni \cite{DZ10} and references
therein. Since the linear function is the simplest class of functions to
analyze and only accounts for a small subset of functions people usually
study, it is of natural interest to explore a corresponding theory for
nonlinear functions. Recently, a nonlinear large deviations framework was
built in Chatterjee and Dembo \cite{NLD}, where the authors deal with the
large deviation principles for nonlinear functions of i.i.d. Bernoulli
random variables. The main theorem in \cite{NLD} gives error bounds of the
mean field approximation of $\log \mathbb{E}_{\mu }[e^{f(X_{1},\ldots
,X_{n})}]$ where $\mu $ is the uniform distribution on $\{-1,+1\}^{n}$. The
error bounds consist of two parts: the complexity terms which involve the
covering number of $\nabla f$, and the smoothness terms which involve the
first two derivatives of $f$. Motivated by \cite{NLD}, Eldan \cite{RE} comes
up with a different nonlinear large deviations framework to deal with
nonlinear functions of i.i.d. random variables supported on $\{-1,+1\}^{n}$.
In \cite{RE}, instead of the covering number of $\nabla f$, a different
notion of complexity called Gaussian width of the discrete gradient of $f$
is introduced, and there $f$ is not required to have the second derivative.
In \cite{NLD} many exciting applications are presented, suggesting the
strong power of the new framework. Using the different method, \cite{RE}
gets stronger results for the examples in \cite{NLD}. However, all of the
examples in \cite{NLD} and \cite{RE}\ concern random variables with
distributions supported on $\{-1,+1\}^{n}$, a small subset of random objects
people usually study in probability theory. Therefore it is natural to
research whether a similar nonlinear large deviations regime works for
random objects with more general distributions, and we can expect it since
the Bernoulli random variable should not be special. Indeed, a framework
similar to \cite{NLD} is used in Basak and Mukherjee \cite{AS} to verify the
universality of the mean field approximation on the Potts model.

In this work, we extend the framework of \cite{NLD} to independent random
variables compactly supported on Banach spaces. Similar to \cite{NLD}, our
main result (Theorem \ref{main-theorem}) gives error bounds for the mean
field approximation of $\log \mathbb{E}_{\mu }[e^{f(X_{1},\ldots ,X_{n})}]$,
while $\mu =\mu _{1}\times \ldots \times \mu _{n}$ could be more general
than \cite{NLD}. Our result has considerable flexibility in applications,
because: (1) $\mu _{i}$'s could be defined on general Banach spaces, and
thus there is no dimension constraint on the supports of $\mu _{i}$'s; (2) $%
\mu _{i}$'s are not required to be discrete; (3) $X_{1},\ldots ,X_{n}$ are
not required to be i.i.d. - only independence is needed. To show this
flexibility we provide examples with high dimensional and continuous random
variables, including an example in which the dimension of the support of $%
\mu _{i}$'s is increasing with $n$; previous methods do not work on these
examples. While we take the same approach as \cite{NLD} in proving our main
result (Theorem \ref{main-theorem}), in \cite{NLD} special calculations for
the product Bernoulli distribution are used, and we find general arguments
for Banach spaces. While our result works for general problems, we propose
that for specific problems the error bounds in Theorem \ref{main-theorem}
could be improved by using the particular structures of the problems. As an
example, we extend the result of \cite{AS} by verifying the mathematical
rigor of the mean field approximation for a larger class of vector spin
models. Note that it will also naturally be of interest to extend the
framework in \cite{RE} for general distributions. However, when proving
theorems for distributions supported on $\{-1,1\}^{n}$, \cite{RE} constructs
a Brownian motion running on $[-1,1]^{n}$, such that whenever a facet of $%
[-1,1]^{n}$ is hit the corresponding coordinate stops moving. In this way
the Brownian motion ends up at $\{-1,1\}^{n}$ uniformly, and one can change
the distribution of the ending point by adding a drift to the Brownian
motion. It is not clear what the corresponding objects should be for general
supports.

\subsection{\textbf{The main result}\label{section-main-result}}

Our goal is to find the leading term of $\log \mathbb{E}_{\mu
}[e^{f(X_{1},\ldots ,X_{n})}]$, for $X_{1},\ldots ,X_{n}$ following a
product measure $\mu $ supported on a compact subset of Banach spaces and $f$
a twice Fr\'{e}chet differentiable functional (see Definition \ref%
{def-differentiable}). As demonstrated in Section \ref{ex-multi-color} and
Section \ref{ex-continuous}, such leading term provides us with the large
deviation rate function. It further plays an important role in statistical
physics, as shown in Section \ref{ex-3}. In Theorem \ref{main-theorem}, we
provide error bounds for the mean field approximation (introduced below) of $%
\log \mathbb{E}_{\mu }[e^{f(X_{1},\ldots ,X_{n})}]$ (\ref%
{eq-mean-field-approximation}), in terms of the covering number of the
gradient $\nabla f$ and the norms of the first two derivatives of $f$. One
should then show on a case by case basis that the error terms are of a
smaller order than the mean field approximation. In Section \ref%
{section-applications} we provide three examples, demonstrating how the
latter task is achieved.

For two probability measures $\xi _{1},\xi _{2}$ on the same space $\Omega $%
, denote by $D(\xi _{1}\parallel \xi _{2})$ the Kullback--Leibler divergence%
\begin{equation*}
D(\xi _{1}\parallel \xi _{2}):=\int_{\Omega }\log \left( \frac{d\xi _{1}}{%
d\xi _{2}}(y)\right) \xi _{1}(dy),
\end{equation*}%
where $\frac{d\xi _{1}}{d\xi _{2}}(\cdot )$ is the Radon--Nikodym
derivative, and we set $D(\xi _{1}\parallel \xi _{2})\equiv \infty $ when
the Radon--Nikodym derivative does not exist. From the Gibbs variational
principle, we have the following identity%
\begin{equation}
\log \mathbb{E}_{\mu }[e^{f(X_{1},\ldots ,X_{n})}]=\max_{\nu \ll \mu
}\left\{ \mathbb{E}_{\nu }[f(X_{1},\ldots ,X_{n})]-D(\nu \parallel \mu
)\right\} .  \label{eq-gibbs}
\end{equation}%
The maximum on the right-hand side of (\ref{eq-gibbs}) is taken over all
measures with $\nu \ll \mu $, which is difficult to analyze. Restricting $%
\nu $ to be a product measure leads to the previously mentioned mean field
approximation:%
\begin{equation}
\log \mathbb{E}_{\mu }[e^{f(X_{1},\ldots ,X_{n})}]\approx \max_{\nu \ll \mu
,\nu =\nu _{1}\times \nu _{2}\times \ldots \times \nu _{n}}\left\{ \mathbb{E}%
_{\nu }[f(X_{1},\ldots ,X_{n})]-D(\nu \parallel \mu )\right\} ,
\label{eq-mean-field-approximation}
\end{equation}%
which is much easier to deal with.

We next introduce some definitions needed for stating our main result. Let $%
[n]:=\{1,\ldots ,n\}$. For each $i\in \lbrack n]$, we consider the
probability space $(V_{i},\mathcal{B}_{i},\mu _{i})$, where $V_{i}$ is a
Banach space (over the field $\mathbb{R}$) equipped with norm $\left\Vert
\cdot \right\Vert _{V_{i}}$, $\mathcal{B}_{i}$ is the Borel $\sigma $%
-algebra generated by $V_{i}$'s open sets, and $\mu _{i}$ is a probability
measure on the measurable space $(V_{i},\mathcal{B}_{i})$. We assume that
for each $i$, there exists a compact convex set $W_{i}\subset $ $V_{i}$ such
that $\mu _{i}(W_{i})=1$. Consider the product probability measure $\mu $
supported on the product space $W$ in $V$ where%
\begin{equation*}
\mu :=\mu _{1}\times \ldots \times \mu _{n}\text{, \ }W:=W_{1}\times \ldots
\times W_{n}\text{, \ }V:=V_{1}\times \ldots \times V_{n}.
\end{equation*}%
Write the element in $V$ as $x=(x_{1},\ldots ,x_{n})$ where $x_{i}\in V_{i}$%
. Set the norm $\left\Vert \cdot \right\Vert _{V}$ on $V$ as%
\begin{equation}
\left\Vert x\right\Vert _{V}:=\max_{i\in \lbrack n]}\left\{ \left\Vert
x_{i}\right\Vert _{V_{i}}\right\} ,\text{ \ }\forall x\in V.
\label{def-V-norm}
\end{equation}%
For two Banach spaces $E_{1}$ and $E_{2}$, and some $g:E_{1}\rightarrow
E_{2} $, we say $g(r)=o(r)$, if there exists a mapping $\varepsilon
:E_{1}\rightarrow E_{2}$ such that $\lim_{\left\Vert r\right\Vert
_{E_{1}}\rightarrow 0}\left\Vert \varepsilon (r)\right\Vert _{E_{2}}=0$, and 
$g(r)=\left\Vert r\right\Vert _{E_{1}}\varepsilon (r)$. We introduce the
definition of twice Fr\'{e}chet differentiability as follows.

\begin{definition}
\label{def-differentiable}\bigskip A functional $f(\cdot )$\thinspace $%
:V\rightarrow \mathbb{R}$ is twice Fr\'{e}chet differentiable on $V$, if

(1) For each $x\in V$ there exists a bounded linear functional $f^{\prime
}(x)(\cdot ):V\rightarrow \mathbb{R}$ such that%
\begin{equation}
f(x+r)-f(x)-f^{\prime }(x)(r)=o(r).  \label{def-f-differentiable}
\end{equation}%
For each $i\in \lbrack n]$, we define the partial differential $%
f_{i}(x)(\cdot ):V_{i}\rightarrow \mathbb{R}$ as%
\begin{equation*}
f_{i}(x)(r_{i}):=f^{\prime }(x)((0,\ldots ,r_{i},\ldots ,0)),
\end{equation*}%
where $(0,\ldots ,r_{i},\ldots ,0)\in V$ is an element with the $i$th
coordinate $r_{i}\in V_{i}$ and $0$ otherwise.

(2) Moreover, $\forall z_{i}\in V_{i}$, $f_{i}(\cdot
)(z_{i}):V_{i}\rightarrow \mathbb{R}$ is Fr\'{e}chet differentiable. That
is, $\forall x\in V$ there exists a bounded linear functional $f_{i}^{\prime
}(x)(z_{i},\cdot ):V\rightarrow \mathbb{R}$ such that%
\begin{equation*}
f_{i}(x+r)(z_{i})-f_{i}(x)(z_{i})-f_{i}^{\prime }(x)(z_{i},r)=o(r).
\end{equation*}%
Similarly, $\forall i,j\in \lbrack n]$ and $z_{i}\in V_{i}$, we define the
twice partial differential $f_{ij}(x)(z_{i},\cdot ):V_{j}\rightarrow \mathbb{%
R}$ as%
\begin{equation*}
f_{ij}(x)(z_{i},r_{j}):=f_{i}^{\prime }(x)(z_{i},(0,\ldots ,r_{j},\ldots
,0)).
\end{equation*}
\end{definition}

For more properties about Fr\'{e}chet differentials, see \cite{CNVS}. We
define the operator norms of the first two partial derivatives of $f(x)$ as%
\begin{eqnarray*}
\left\Vert f_{i}(x)\right\Vert &:=& \sup_{\left\Vert r_{i}\right\Vert
_{V_{i}}\leq 1}\left\vert f_{i}(x)(r_{i})\right\vert \text{, } \\
\left\Vert f_{ij}(x)\right\Vert &:=& \sup_{\max \{\left\Vert
r_{j}\right\Vert _{V_{j}},\left\Vert z_{i}\right\Vert _{V_{i}}\}\leq
1}\left\vert f_{ij}(x)(z_{i},r_{j})\right\vert ,\text{ \ }\forall i,j\in
\lbrack n].
\end{eqnarray*}%
Denote by $\left\vert f(x)\right\vert $ the absolute value of $f(x)$. We
assume that there exists $a,b_{i},c_{ij}>0$ such that $\forall x\in W$, 
\begin{equation*}
\left\vert f(x)\right\vert \leq a\text{, \ }\left\Vert f_{i}(x)\right\Vert
\leq b_{i}\text{, \ \ }\left\Vert f_{ij}(x)\right\Vert \leq c_{ij}\text{, \
\ }\forall i,j\in \lbrack n].
\end{equation*}%
Since $W_{i}$'s are assumed to be compact, we can find $M>0$ such that each $%
W_{i}$ satisfies%
\begin{equation}
\forall z_{i}^{(1)},z_{i}^{(2)}\in W_{i}\text{, \ \ }\left\Vert
z_{i}^{(1)}-z_{i}^{(2)}\right\Vert _{V_{i}}\leq M.  \label{intro-M}
\end{equation}%
Denoting by $m(\nu _{i})\in V_{i}$ the mean of $\nu _{i}$, namely the unique
point $m$ such that%
\begin{equation}
\int_{V_{i}}h\left( z\right) d\nu _{i}\left( z\right) =h(m)\text{, }\forall 
\text{ bounded linear functional }h:V_{i}\rightarrow \mathbb{R}\text{.}
\label{def-mi}
\end{equation}%
The existence of $m(\nu _{i})$ is guaranteed by the fact that $\mu _{i}$ is
supported on the compact set $W_{i}$, for example see \cite[Chapter 2]{PIB}.
Then, for any product measure $\nu =\nu _{1}\times \nu _{2}\times \ldots
\times \nu _{n}$ on $W$, let%
\begin{equation}
m(\nu ):=(m(\nu _{1}),\ldots ,m(\nu _{n}))\text{.}  \label{def-m}
\end{equation}%
Fixing some $\epsilon >0$, assume that there exists a finite set $\mathcal{D}%
(\epsilon )=\{d^{(\alpha )}=(d_{1}^{(\alpha )},\ldots ,d_{n}^{(\alpha
)}),\alpha \in I\}$ (where $I$ is the index set, and for each $\alpha \in
I,i\in \lbrack n]$, $d_{i}^{(\alpha )}$ is a bounded linear functional from $%
V_{i}$ to $\mathbb{R}$) such that for any $x\in W$, there exists a $%
d=(d_{1},\ldots ,d_{n})\in \mathcal{D}(\epsilon )$ satisfying%
\begin{equation}
\sum_{i=1}^{n}\left\Vert f_{i}(x)-d_{i}\right\Vert ^{2}\leq \epsilon ^{2}n.
\label{covering_condition}
\end{equation}%
Denote by $\left\vert \mathcal{D}(\epsilon )\right\vert $ the cardinality of 
$\mathcal{D}(\epsilon )$. Following is the main theorem, which gives upper
and lower bounds of the mean field approximation for $\log \mathbb{E}_{\mu
}[e^{f(X)}]$ where $X\thicksim \mu $.

\begin{theorem}
\label{main-theorem}Under the above setting, we have%
\begin{equation}
\log \int_{W}e^{f(x)}d\mu (x)\leq \max_{\nu \ll \mu ,\nu =\nu _{1}\times \nu
_{2}\times \ldots \times \nu _{n}}\left\{ f(m(\nu ))-\sum_{i=1}^{n}D(\nu
_{i}\parallel \mu _{i})\right\} +B_{1}+B_{2}+\log 2+\log \left\vert \mathcal{%
D}(\epsilon )\right\vert ,  \label{thm-1-upper-bound}
\end{equation}%
where%
\begin{eqnarray}
B_{1}&:= &4\left( M^{2}\left(
a\sum_{i=1}^{n}c_{ii}+\sum_{i=1}^{n}b_{i}^{2}\right)
+M^{3}\sum_{i,j=1}^{n}b_{i}c_{ij}+M^{4}\left(
a\sum_{i,j=1}^{n}c_{ij}^{2}+\sum_{i,j=1}^{n}b_{i}b_{j}c_{ij}\right) \right)
^{\frac{1}{2}},  \label{def-B} \\
B_{2}&:= &4\left( \sum_{i=1}^{n}b_{i}^{2}+\epsilon ^{2}n\right) ^{\frac{1}{2}%
}\left( M^{3}\left( \sum_{i=1}^{n}c_{ii}^{2}\right) ^{\frac{1}{2}}+M^{2}n^{%
\frac{1}{2}}\epsilon \right) +\sum_{i=1}^{n}M^{2}c_{ii}+Mn\epsilon .
\label{def-D}
\end{eqnarray}%
Moreover,%
\begin{equation}
\log \int_{W}e^{f(x)}d\mu (x)\geq \max_{\nu \ll \mu ,\nu =\nu _{1}\times \nu
_{2}\times \ldots \times \nu _{n}}\left\{ f(m(\nu ))-\sum_{i=1}^{n}D(\nu
_{i}\parallel \mu _{i})\right\} -\frac{M^{2}}{2}\sum_{i=1}^{n}c_{ii}.
\label{thm-1-lower-bound}
\end{equation}
\end{theorem}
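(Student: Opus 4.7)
The lower bound is the easier half. By the Gibbs variational principle (\ref{eq-gibbs}), restricting the maximum to product measures $\nu=\nu_1\times\cdots\times\nu_n$ (for which $D(\nu\parallel\mu)=\sum_i D(\nu_i\parallel\mu_i)$), it suffices to show $\mathbb{E}_\nu[f(X)]\geq f(m(\nu))-\tfrac{M^2}{2}\sum_i c_{ii}$ for every such $\nu$. I would prove this by a one-coordinate-at-a-time telescoping: set $Y^{(0)}=m(\nu)$, $Y^{(n)}=X$, and $Y^{(i)}=(X_1,\ldots,X_i,m(\nu_{i+1}),\ldots,m(\nu_n))$, and expand each increment as $f(Y^{(i)})-f(Y^{(i-1)})=f_i(Y^{(i-1)})(X_i-m(\nu_i))+R_i$ with $|R_i|\leq \tfrac{c_{ii}}{2}\|X_i-m(\nu_i)\|_{V_i}^2\leq \tfrac{M^2 c_{ii}}{2}$ by the bound on $\|f_{ii}\|$ and (\ref{intro-M}). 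Under the product $\nu$, $f_i(Y^{(i-1)})$ is independent of $X_i$, so applying (\ref{def-mi}) to this bounded linear functional kills the first-order term in expectation, leaving only the $R_i$ contribution.

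For the upper bound I would follow the scheme of \cite{NLD}, reorganized so that each step is phrased in Banach-space terms. First, discretize by partitioning $W$ into at most $|\mathcal{D}(\epsilon)|$ measurable pieces $A_d$ with $\sum_i\|f_i(x)-d_i\|^2\leq\epsilon^2 n$ on $A_d$, giving
\begin{equation*}
\log\int_W e^{f}\,d\mu\leq \log|\mathcal{D}(\epsilon)|+\max_{d\in\mathcal{D}(\epsilon)}\log\int_{A_d}e^{f(x)}\,d\mu(x).
\end{equation*}
Next, for each fixed $d$, introduce the product tilt $\nu^d=\prod_i\nu_i^d$ with $d\nu_i^d/d\mu_i\propto e^{d_i(\cdot)}$; the linear partition function factorizes and rewrites as
\begin{equation*}
\sum_i\log\!\int_{W_i}e^{d_i}\,d\mu_i=\sum_i\bigl(d_i(m(\nu_i^d))-D(\nu_i^d\parallel\mu_i)\bigr),
\end{equation*}
which reduces the problem to estimating $\log\mathbb{E}_{\nu^d}[\mathbf{1}_{A_d}e^{f(X)-\sum_i d_i(X_i)}]$. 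On $A_d$ the residual $g(x):=f(x)-\sum_i d_i(x_i)$ has gradient of $L^2$-norm at most $\epsilon\sqrt{n}$, so after centering at $m(\nu^d)$ a two-term Taylor expansion combined with (\ref{def-mi}) (to cancel the centered first-order sum in expectation) and the Hessian bounds $\|f_{ij}\|\leq c_{ij}$ produces the quadratic corrections that assemble into $B_1$ and $B_2$; the $\log 2$ summand absorbs a union bound between the region where the Taylor remainder is controlled by the $c_{ij}$-bounds and its complement where one falls back on the uniform bound $|f|\leq a$.

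The main obstacle is Stage 3: replacing the distribution-specific arguments of \cite{NLD} (which exploit the sign-flip structure of $\{-1,+1\}^n$) with estimates valid for arbitrary $\mu_i$ on compact convex $W_i\subset V_i$. The key moves are to use (\ref{def-mi}) in place of the Bernoulli mean identity so that any bounded linear functional applied to $X_i-m(\nu_i^d)$ has mean zero under $\nu^d$, to use (\ref{intro-M}) in place of the radius $1$ of Bernoulli coordinates so that $\|X_i-m(\nu_i^d)\|_{V_i}\leq M$ almost surely, and to invoke Fr\'echet--Taylor for $f$ on $V$ (cf.\ Definition~\ref{def-differentiable}) rather than coordinatewise finite differences. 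The bookkeeping that turns these three ingredients into the precise constants $B_1,B_2$ is delicate: each monomial $a\sum c_{ii}$, $\sum b_i^2$, $M^3\sum b_i c_{ij}$, $M^4\sum c_{ij}^2$, etc., corresponds to one Cauchy--Schwarz application in the cross-term analysis, and matching these terms to the covering error $\epsilon^2 n$ (which appears in $B_2$ via $\sqrt{\sum b_i^2+\epsilon^2 n}$ and via $Mn\epsilon$) is where the proof does most of its work.
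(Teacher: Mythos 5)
Your lower bound argument is correct and is essentially the paper's (the paper interpolates along $tx+(1-t)m(\nu)$ and decouples coordinate $i$ with a fixed $\theta_i$ rather than telescoping coordinate by coordinate, but both yield the same error $\tfrac{M^2}{2}\sum_i c_{ii}$). The covering/tilting scaffolding of your upper bound is also right: the partition into the sets $C(d)=\{x:d^x=d\}$, the product tilts $d\nu_i^d/d\mu_i\propto e^{d_i}$, and the identity $\log\int e^{d_i}d\mu_i=d_i(m(\nu_i^d))-D(\nu_i^d\parallel\mu_i)$ all appear verbatim in the paper (equations (\ref{d_equation})--(\ref{kl_equation}) and Section~\ref{Section-upper-bound-final}).

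The gap is in your Stage~3. A two-term Taylor expansion of $g=f-\sum_i d_i$ around $m(\nu^d)$ with the Hessian bounds $\|f_{ij}\|\le c_{ij}$ controls the remainder only by $\tfrac{M^2}{2}\sum_{i,j=1}^n c_{ij}$, and this quantity appears nowhere in $B_1$ or $B_2$: the theorem only ever involves the diagonal sum $\sum_i c_{ii}$, squares $\sum_{i,j}c_{ij}^2$, and products $\sum_{i,j}b_ic_{ij}$, $\sum_{i,j}b_ib_jc_{ij}$, all under a square root. In the applications (e.g.\ triangle counts) $\sum_{i,j}c_{ij}$ is of order $n$ while $B_1$ is $o(n)$, so the naive expansion is quantitatively fatal, not just lossy. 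The terms $a\sum_i c_{ii}$ and $a\sum_{i,j}c_{ij}^2$ involving the sup-norm $a$ of $f$ cannot arise from any Taylor expansion of $g$; they come from the machinery your proposal omits entirely: the tilted measure $\widetilde\mu\propto e^f d\mu$, the conditional-mean map $\widehat{x}_i=\mathbb{E}_{\widetilde\mu}[X_i\mid X_j=x_j,\ j\ne i]$, and the second-moment estimates $\mathbb{E}_{\widetilde\mu}[(f(X)-f(\widehat X))^2]$ and $\mathbb{E}_{\widetilde\mu}[G^2(X)]$ (Propositions~\ref{Prop-diff-g} and~\ref{Prop-E-G-2}), where cancellations from the fixed point $\widehat X$ convert linear sums in $c_{ij}$ into the quadratic combinations above. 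The $\log 2$ is then a Chebyshev bound restricting to an event of $\widetilde\mu$-probability at least $\tfrac12$, not a union bound falling back on $|f|\le a$ on the complement (that fallback would cost an additive $O(a)$, not $\log 2$). One further piece you would still need even after installing $\widehat x$: a comparison between $\widehat x$ and the tilted mean $p^x=m(\nu^{d^x})$, which the paper obtains by a Hahn--Banach/interpolation argument (\ref{lowcomplexity_5}) and which supplies the $M^3(\sum_i c_{ii}^2)^{1/2}+M^2\sqrt n\,\epsilon$ factor in $B_2$.
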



Theorem \ref{main-theorem} is an extension of \cite[Theorem 1.5]{NLD}. If $%
\mu _{i}$'s are Bernoulli distribution with parameter $\frac{1}{2}$, Theorem %
\ref{main-theorem} is merely \cite[Theorem 1.5]{NLD} with slight
modifications. The main challenge here is to avoid the special properties of
the Bernoulli distribution and the hypercube, which are used in the proof of 
\cite[Theorem 1.5]{NLD}. For example, letting $\widetilde{\mu }$ be a
measure such that $\frac{d\widetilde{\mu }}{d\mu }(x)\propto e^{f(x)}$, in 
\cite[preceding Lemma 3.1]{NLD} the authors utilize the explicit formula for 
$\widehat{X}_{i}:=E_{\widetilde{\mu }}\left[ X_{i}\mid X_{j},j\neq i\right] $
in case of Bernoulli $\{X_{i}\}$ when bounding $f(X)-f(\widehat{X})$.
Lacking such a simple formula here requires a more sophisticated analysis of
the error induced by approximating $f(X)$ by $f(\widehat{X})$. For another
instance, in \cite{NLD}, for any point $p$ in the hypercube one has a
product Bernoulli measure $\nu ^{p}$ such that $\nu ^{p}\ll \mu $ and $%
m\left( \nu ^{p}\right) =p$. Lacking such explicit description of $\nu ^{p}$
for all $p\in W$, we instead manage to carry the proof while restricting to $%
\nu ^{p}$ for $p$ in a finite subset of $W$, for which the explicit
description (\ref{d_equation}) exists. See detailed discussions on the
difference from \cite{NLD}, important part in this extension, and the
outline of the proof of Theorem \ref{main-theorem} in Section \ref{sketch}.
The full proof of Theorem \ref{main-theorem} is given in Section \ref%
{section-proof-main-theorem}.

\subsection{\textbf{Applications}\label{section-applications}}

We provide three applications of our framework. The first two of them are
large deviations of subgraph counts in random graph, and the third one is
the mean field approximation for vector spin models.

\subsubsection{\textbf{Monochromatic subgraph counts in edge-colored
complete graphs} \label{ex-multi-color}}

The edge colored complete graph is an important object in combinatorics, for
example see Ramsey's Theorem. People have studied this kind of graphs from
different perspectives, for example see \cite{ECGRAPH-1}, \cite{ECGRAPH-2}
and \cite{ECGRAPH-3}. On the other hand, the large deviations for subgraph
counts in random graph has been studied a lot in probability, for example
see \cite{SKR04}, \cite{BGLZ15} and \cite{C16}. In this example, we consider
the large deviation for the monochromatic subgraph counts in an edge colored
random graph. More precisely, we consider a complete graph $G$ with $N$
vertices, and assume that each edge of $G$ has a color which is i.i.d.
uniformly chosen from $l$ different colors. Take any fixed finite simple
graph $H$. 
We investigate the large deviation of the number of homomorphisms of $H$
into $G$ whose edges are of the same color. We formulate this problem as
follows: consider a random vector $X=(X_{ij})_{1\leq i<j\leq N}$, where $%
X_{ij}$'s are i.i.d. chosen from the set $\Lambda :=\{(1,0,\ldots
,0),(0,1,\ldots ,0),\ldots ,(0,0,\ldots ,1)\}$ (where there are $l$ elements
in $\Lambda$ and the length of each element is $l$). Regard each element in $%
\Lambda $ as a color, and regard $X_{ij}$ as the color of the edge $\{i,j\}$%
. Then $X$ corresponds to a coloring on $G$. Let $m$ be the number of edges
of $H$, $\Delta $ be the maximum degree of $H$, and $k$ be the number of
vertices of $H$. For convenience we let the vertex set of $H$ be $%
\{1,\ldots,k\}$, and denote by $E$ the edge set of $H$. For $%
x=(x_{ij})_{1\leq i<j\leq N}$ where $x_{ij}\in \mathbb{R}^{l}$, define%
\begin{equation}
T(x):=\sum_{q_{1},q_{2},\ldots q_{k}\in \left[ N\right] }\sum_{s=1}^{l}\prod%
\limits_{\{r,r^{\prime }\}\in E}x_{q_{r}q_{r^{\prime }}s},  \label{def-T-x}
\end{equation}%
where $x_{q_{r}q_{r^{\prime }}s}$ is the $s$th coordinate of $%
x_{q_{r}q_{r^{\prime }}}$ (recall that $x_{q_{r}q_{r^{\prime }}}\in \Lambda $
is a vector with length $l$), $x_{ij}$ is interpreted as $x_{ji}$ if $i>j$,
and $x_{ii}$ is interpreted as the $0$ vector in $\mathbb{R}^l$ for all $i$.
It is easy to check that for coloring $X$, $T(X)$ is the number of
homomorphisms of $H$ in $G$ with same color edges. Denote by $o(1)$ a
quantity which goes to $0$ as $N$ goes to $\infty $. We show the following
large deviation result for $T(X).$

\begin{theorem}
\label{Prop-ex-multi-color}For $T(X)$ as above and any $u>1$, as $%
N\rightarrow \infty $ we have%
\begin{equation*}
\mathbb{P}(T(X)\geq u\mathbb{E[}T(X)])\leq \exp \left( -\psi
_{l}(u)(1+o(1))\right) \text{, \ \ when }l\leq N^{1/(19+8m+21\Delta )},
\end{equation*}%
and 
\begin{equation*}
\mathbb{P}(T(X)\geq u\mathbb{E[}T(X)])\geq \exp \left( -\psi
_{l}(u)(1+o(1))\right) \text{, \ \ when }l\leq N^{1/(2\Delta +m+2))}(\log
N)^{-1},
\end{equation*}%
where%
\begin{equation}
\psi _{l}(u):=\inf \{\sum_{1\leq i<j\leq N}\sum_{s=1}^{l}x_{ijs}\log \frac{%
x_{ijs}}{1/l}:x_{ij}\in W_{0},\text{ }T((x_{ij})_{1\leq i<j\leq N})\geq u%
\mathbb{E[}T(X)]\},  \label{def-ex1-variation}
\end{equation}%
and%
\begin{equation}
W_{0}:=\{(z_{1},\ldots ,z_{l}):\text{ }\sum_{i=1}^{l}z_{i}=1,\text{ \ }%
z_{i}\geq 0\text{ \ }\forall i\in \lbrack l]\}.  \label{def_W}
\end{equation}
\end{theorem}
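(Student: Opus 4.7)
The plan is to deduce Theorem~\ref{Prop-ex-multi-color} from Theorem~\ref{main-theorem} applied to the tilted functional $f(x):=\theta T(x)$ for a suitably chosen $\theta>0$, together with a Chernoff bound for the upper tail and a change-of-measure argument for the lower tail. Here the ``coordinates'' are the $n=\binom{N}{2}$ edges of $G$; for each edge $\{i,j\}$ take $V_{ij}=\mathbb{R}^{l}$, $W_{ij}=W_{0}$, and $\mu_{ij}$ uniform on $\Lambda$. Since the mean $m(\nu_{ij})$ of a probability measure on $\Lambda$ is exactly its probability vector in $W_{0}$, and $D(\nu_{ij}\|\mu_{ij})=\sum_{s}m(\nu_{ij})_{s}\log(m(\nu_{ij})_{s}/(1/l))$, the rate function $\psi_{l}(u)$ in \eqref{def-ex1-variation} coincides with $\inf\{\sum_{ij}D(\nu_{ij}\|\mu_{ij}):T(m(\nu))\geq u\,\mathbb{E}[T(X)]\}$ over product measures, linking the mean-field term in \eqref{thm-1-upper-bound} to the desired tail bounds via Legendre duality.

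For the upper bound, Markov's inequality gives $\log\mathbb{P}(T(X)\geq u\mathbb{E}[T(X)])\leq \log\mathbb{E}[e^{\theta T(X)}]-\theta u\mathbb{E}[T(X)]$, so an upper estimate on $\log\mathbb{E}[e^{\theta T(X)}]$ via \eqref{thm-1-upper-bound}, followed by optimization in $\theta>0$, yields the claim. To apply Theorem~\ref{main-theorem} I need estimates on $a,b_{ij},c_{ij,i'j'}$ and a cover $\mathcal{D}(\epsilon)$ of the gradient. Since $T$ is a polynomial of degree $m$ in variables of norm $M=O(1)$, direct differentiation produces polynomial-in-$N$ bounds proportional to $\theta$, with exponents determined by $k$, $\Delta$, and the structure of $H$ (counted via the number of $H$-homomorphisms passing through the relevant edge(s)). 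For the cover I will adapt the graphon discretization of \cite{NLD, AS}: partition $[N]$ into blocks of size $\delta N$ and, for each block pair and each color $s\in[l]$, specify the average of $x_{ij,s}$ in an $\epsilon'$-net of $[0,1]$; a Cauchy--Schwarz estimate shows such a cover satisfies \eqref{covering_condition} for suitable $\delta,\epsilon'$, with cardinality bounded by $\exp(\widetilde{O}(l\delta^{-2}\log(1/\epsilon')))$.

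For the lower bound, select a product measure $\nu^{\star}$ (a near-optimizer of $\psi_{l}(u)$, regularized if needed so that $\nu^{\star}$ places positive mass on every color) with $T(m(\nu^{\star}))\geq u\mathbb{E}[T(X)]$ and $\sum_{ij}D(\nu^{\star}_{ij}\|\mu_{ij})\leq \psi_{l}(u)(1+o(1))$. A standard change-of-measure argument gives
\begin{equation*}
\log\mathbb{P}_\mu\bigl(T\geq u\mathbb{E}[T]\bigr)\geq -D(\nu^{\star}\|\mu)+\log\mathbb{P}_{\nu^{\star}}\Bigl(T\geq u\mathbb{E}[T]\text{ and }\tfrac{d\nu^{\star}}{d\mu}\text{ is close to its mean}\Bigr).
\end{equation*}
Because $T$ is multilinear in independent bounded variables under $\nu^{\star}$ and $\log(d\nu^{\star}/d\mu)$ factorizes into $\binom{N}{2}$ bounded independent summands, Chebyshev's inequality forces both events to have probability at least $1/2$ for $N$ large, yielding $\log\mathbb{P}_\mu\geq -\psi_{l}(u)(1+o(1))$ in the claimed regime $l\leq N^{1/(2\Delta+m+2)}(\log N)^{-1}$.

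The main obstacle will be the bookkeeping for the upper bound: choosing $\theta$ and the discretization parameters $\epsilon,\delta,\epsilon'$ so that each of $B_{1}$, $B_{2}$, and $\log|\mathcal{D}(\epsilon)|$ is $o(\psi_{l}(u))$. Since $\psi_{l}(u)$ scales polynomially in $N$ with a $\log l$ prefactor, while the error contributions involve high powers of $N$, $l$, $m$, and $\Delta$, the precise exponent $19+8m+21\Delta$ in the allowed range of $l$ emerges from a delicate simultaneous balancing of these three contributions. A secondary subtlety, present on the lower side, is that an exact minimizer of $\psi_{l}(u)$ may lie on the boundary of the simplex $W_{0}$ where $D(\nu_{ij}\|\mu_{ij})$ blows up, which is why $\nu^{\star}$ must be a mildly regularized approximate minimizer before the change-of-measure step can be carried out.
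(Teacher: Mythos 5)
Your lower-bound argument is essentially the paper's: tilt to a product near-optimizer of \eqref{def-ex1-variation}, and use Chebyshev to show that both $\{T\geq u\mathbb{E}[T]\}$ and the concentration of the log-likelihood ratio hold with probability at least $1/2$ under the tilted measure. (One small point: since each $\mu_{ij}$ is uniform on $l$ atoms, $D(\nu_{ij}\parallel\mu_{ij})\leq\log l$ on all of $W_0$, so no boundary regularization is needed; what is needed instead, and what the paper supplies, is a quantitative continuity estimate for the variational problem, see below.) The upper bound, however, has a genuine gap. You propose to take $f=\theta T$, apply \eqref{thm-1-upper-bound}, use Markov, and optimize over $\theta$. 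This produces the exponent
\begin{equation*}
\inf_{\theta>0}\ \sup_{\nu}\left\{\theta\bigl(T(m(\nu))-u\mathbb{E}[T(X)]\bigr)-\sum_{i<j}D(\nu_{ij}\parallel\mu_{ij})\right\},
\end{equation*}
which by weak duality is always $\geq-\psi_l(u)$ and can be strictly larger: the constraint set $\{T(m(\nu))\geq u\mathbb{E}[T]\}$ is non-convex ($T$ is a degree-$m$ polynomial), so the Chernoff/Legendre exponent only recovers the convex envelope of the rate function, not $\psi_l(u)$ itself. No choice of $\theta$ closes this duality gap, so this route cannot yield the claimed sharp constant. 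The paper avoids tilting entirely: it applies Theorem \ref{main-theorem} to the smoothed truncated indicator $g(x)=nKh(((\widetilde{T}(x)/n)-t)/\delta)$ with $h$ as in \eqref{def-h} and $K=\phi_l(t)/n$, so that $\mathbb{P}(\widetilde{T}(X)\geq tn)\leq\int e^{g}d\mu$ and the mean-field maximum for $g$ is directly bounded by $-\phi_l(t-\delta)$, encoding the hard constraint without any Legendre transform.

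This substitution forces a second ingredient that your proposal omits entirely and that consumes much of the paper's effort: a quantitative continuity bound $\phi_l(t)-\phi_l(t-\delta)\leq CN^{2}\delta^{1/2}l^{(m+2)/2}\log l$, proved by perturbing a near-minimizer toward the monochromatic configuration and controlling the gain in $\widetilde{T}$ via a rearrangement-inequality argument and a count of homomorphisms through low-weight edges. Together with the a priori estimate $-\log\mathbb{P}(\widetilde{T}(X)\geq tn)\geq CN^{2}l^{-\Delta}$ (obtained by comparing each color class with subgraph counts in $G(N,l^{-1})$, which is also needed to convert additive error bounds into the multiplicative $(1+o(1))$), this continuity bound is where the dependence on $l$, $m$ and $\Delta$ in the threshold $l\leq N^{1/(19+8m+21\Delta)}$ actually originates — not, as you suggest, solely from balancing $B_1$, $B_2$ and $\log|\mathcal{D}(\epsilon)|$. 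Your covering-number and derivative estimates are in the right spirit (the paper also builds $\mathcal{D}(\epsilon)$ coordinatewise from the scalar cover of \cite{NLD}), but without replacing the Chernoff step and supplying the continuity estimate the upper bound does not go through.
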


Theorem \ref{Prop-ex-multi-color} provides the large deviation rate function
for $T(X)$ via the variational problem (\ref{def-ex1-variation}), in the
case that the number of colors $l$ not increasing with $N$ faster than
certain polynomial speed. We give the proof of Theorem \ref%
{Prop-ex-multi-color} in Section \ref{section-ex1-proof}.

\subsubsection{\textbf{Triangle counts with continuous edge weights}\label%
{ex-continuous}}

The large deviation principle for the triangle counts in random graph has
been studied for a long time. People study this problem for both dense Erd%
\H{o}s-R\'{e}nyi random graph $G(N,p)$, in which $p$ is fixed (\cite{CV11}),
and sparse Erd\H{o}s-R\'{e}nyi random graph $G(N,p)$, in which $p$ goes to $%
0 $ as $N$ goes to $\infty $ (\cite{KV04}, \cite{SKR04}, \cite{DK12}, \cite%
{LZ14}, \cite{NLD}, \cite{RE}). See Chatterjee \cite{C16} for more
discussions and references. Here we consider the continuous version of the
triangle counts problem in the dense random graph. That is, let $G$ be a
complete graph with $N$ vertices. Let $X=(X_{ij})_{1\leq i<j\leq N}$ where $%
X_{ij}$'s are i.i.d. from $U(0,1)$, the uniform distribution on $[0,1]$. For
each $1\leq i<j\leq N$, we assign a weight $X_{ij}$ to the edge $\{i,j\}$.
For $x=(x_{ij})_{1\leq i<j\leq N}$, we define%
\begin{equation*}
T(x):= \frac{1}{6} \sum_{i,j,k \in \left[ N\right] } x_{ij}x_{jk}x_{ki},
\end{equation*}%
where we interpret $x_{ij}=x_{ji}$ if $i>j$, and $x_{ii}=0$ for all $i\in %
\left[ N\right] $. Then $T(X)$ is the number of weighted triangles in $G$
for weights $X$. For any $a\in (0,1),$ we denote by $\nu ^{a}$ the truncated
exponential distribution on $[0,1]$ with mean $a$, that is, the distribution
whose density $p_{\nu ^{a}}(\cdot )$ is%
\begin{equation*}
p_{\nu ^{a}}(z)=\frac{\lambda _{a}e^{-\lambda _{a}z}}{1-e^{-\lambda _{a}}}%
\text{ for }z\in \left( 0,1\right) \text{, with }\lambda _{a}\text{ such
that }\int_{0}^{1}p_{\nu ^{a}}(z)dz=a.
\end{equation*}%
By direct calculation, the KL divergence between $\nu ^{a}$ and $U(0,1)$ is%
\begin{equation*}
D(\nu ^{a}||U(0,1))=\int_{0}^{1}\frac{\lambda _{a}e^{-\lambda _{a}x}}{%
1-e^{-\lambda _{a}}}\log (\frac{\lambda _{a}e^{-\lambda _{a}x}}{%
1-e^{-\lambda _{a}}})dx=-1+\frac{\lambda _{a}e^{-\lambda _{a}}}{%
1-e^{-\lambda _{a}}}+\log (\frac{\lambda _{a}}{1-e^{-\lambda _{a}}}).
\end{equation*}%
Let $n=N(N-1)/2$, the number of edges in $G$. Define%
\begin{equation*}
\psi _{n}(u):=\inf \{\sum_{1 \leq i<j \leq N}(-1+\frac{\lambda
_{y_{ij}}e^{-\lambda _{y_{ij}}}}{1-e^{-\lambda _{y_{ij}}}}+\log (\frac{%
\lambda _{y_{ij}}}{1-e^{-\lambda _{y_{ij}}}})):y_{ij}\in (0,1),\text{ }%
T((y_{ij})_{1\leq i<j\leq N})\geq u\mathbb{E[}T(X)]\}.
\end{equation*}%
We show that

\begin{theorem}
\label{Prop-ex-continuous}For $T(X)$ as above and any $1<u<8$, we have%
\begin{equation*}
\mathbb{P}(T(X)\geq u\mathbb{E[}T(X)])=\exp \left( -\psi
_{n}(u)(1+o(1))\right) \text{ as }N\rightarrow \infty \text{.}
\end{equation*}
\end{theorem}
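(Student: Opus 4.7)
The plan is to apply Theorem \ref{main-theorem} to $f = \theta T$ on $W = [0,1]^n$ (with $n = \binom{N}{2}$, $V_i = \mathbb{R}$, $M = 1$) and optimize over $\theta > 0$. The upper bound on the probability follows from exponential Chebyshev,
\begin{equation*}
\log \mathbb{P}(T(X) \geq u\mathbb{E}[T(X)]) \leq -\theta u\mathbb{E}[T(X)] + \log \int_W e^{\theta T(x)}\,d\mu(x),
\end{equation*}
combined with (\ref{thm-1-upper-bound}) to bound the integral by a mean-field variational problem plus error terms. The lower bound I plan to derive by a direct tilting: take $\nu = \prod_{i<j}\nu^{y_{ij}}$ where $(y_{ij})$ nearly realizes the infimum defining $\psi_n(u)$, and use $\mathbb{P}_\mu(A) = \mathbb{E}_\nu[(d\mu/d\nu)\mathbf{1}_A]$ together with concentration of $T(X)$ and of $\log(d\mu/d\nu)(X)$ under $\nu$.

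For the inputs to Theorem \ref{main-theorem}, $\partial T/\partial x_{ij}(x) = \sum_k x_{ik}x_{kj} \in [0, N-2]$ and the mixed second partial $\partial^2 T/\partial x_{ij}\partial x_{kl}(x)$ vanishes unless the two edges share exactly one vertex, in which case it is a single $x$-entry in $[0,1]$. Thus for $f=\theta T$ I obtain $a = O(\theta N^3)$, $b_i = O(\theta N)$, and $c_{ij} \leq \theta$ with only $O(N)$ nonzero $c_{ij}$ per row $i$. Since $\psi_n(u) = \Theta(N^2)$, I take $\theta = \Theta(N^{-2})$; the definitions (\ref{def-B})--(\ref{def-D}) then give $B_1$, $B_2$, and $\tfrac{M^2}{2}\sum_i c_{ii}$ all $o(N^2)$ provided I can exhibit $\mathcal{D}(\epsilon)$ with $\log|\mathcal{D}(\epsilon)| = o(N^2)$ at some $\epsilon \to 0$. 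The mean-field maximum $\max_\nu\{\theta T(m(\nu)) - \sum_{i<j} D(\nu_{ij} \parallel U(0,1))\}$ reduces, after minimizing the relative entropy over $\nu_{ij}$ with fixed mean $y_{ij}$ (achieved by the truncated exponential $\nu^{y_{ij}}$), to a variational problem whose Legendre dual at $t = u\mathbb{E}[T(X)]$ is $\psi_n(u)$; choosing $\theta$ as the associated Lagrange multiplier converts the exponential-Chebyshev estimate into $-\psi_n(u)(1+o(1))$.

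The main obstacle will be constructing the covering $\mathcal{D}(\epsilon)$. Because $\partial T/\partial x_{ij}(x)$ is a bilinear form in the symmetric matrix $x$, a Frieze--Kannan weak-regularity argument should suffice: partition $[N]$ into $K = O(\epsilon^{-2})$ blocks, replace each $x_{ij}$ by its block-pair average, and discretize each block-pair average to $O(1/\epsilon)$ levels. A direct cut-norm estimate shows the resulting $L^2$ gradient error is $O(\epsilon\sqrt{n})$, and $\log|\mathcal{D}(\epsilon)| = O(\epsilon^{-4}\log(1/\epsilon) + N\log(1/\epsilon))$ once the partition itself is encoded. Taking $\epsilon$ to decay slowly in $N$ (for example a small negative power of $\log N$) makes this $o(N^2)$ while also keeping the $M^2 \sqrt{n}\,\epsilon$ and $Mn\epsilon$ contributions in $B_2$ subleading.

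For the lower bound, under $\nu = \prod_{i<j}\nu^{y_{ij}}$ the product structure gives $\mathbb{E}_\nu[T(X)] = T(y) \geq u\mathbb{E}_\mu[T(X)]$, and $T(X)$ concentrates around $T(y)$ by a direct variance bound for the cubic form (each edge perturbation changes $T$ by $O(N)$, so $\mathrm{Var}_\nu T = O(N^4) = o(T(y)^2)$). Meanwhile $\log(d\mu/d\nu)(X)$ is a sum of bounded independent variables of mean $-\sum_{i<j} D(\nu^{y_{ij}}\parallel U(0,1)) = -\psi_n(u)(1+o(1))$ and variance $o(\psi_n(u)^2)$, so it concentrates by Chebyshev. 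Plugging these two concentration statements into $\mathbb{P}_\mu(A) = \mathbb{E}_\nu[(d\mu/d\nu)\mathbf{1}_A]$ yields the matching lower bound $\exp(-\psi_n(u)(1+o(1)))$. The hypothesis $u < 8$ enters through feasibility of the variational problem: the constant weight $y \equiv (u/8)^{1/3}$ lies in $(0,1)$ exactly when $u < 8$, which guarantees $\psi_n(u) < \infty$ and permits the preceding construction.
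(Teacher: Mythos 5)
Your lower bound is essentially the paper's argument (tilt to the product of truncated exponentials, Chebyshev for $T$ and for the log-likelihood ratio), but your upper bound takes a genuinely different route that has a gap. You apply Theorem \ref{main-theorem} to the linear tilt $f=\theta T$ and then invoke exponential Chebyshev, asserting that ``choosing $\theta$ as the associated Lagrange multiplier converts the exponential-Chebyshev estimate into $-\psi_n(u)(1+o(1))$.'' This is exactly the step that is not justified. What Chebyshev plus the mean-field approximation gives is
\begin{equation*}
\log \mathbb{P}(T(X)\geq tn)\leq \inf_{\theta>0}\sup_{y}\bigl\{\theta\bigl(T(y)-tn\bigr)-\textstyle\sum_{i<j}D(\nu^{y_{ij}}\parallel U)\bigr\}+o(N^2),
\end{equation*}
i.e.\ the Lagrangian dual of the constrained problem defining $\psi_n(u)$. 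The dual value always dominates $-\psi_n(u)$, and it equals $-\psi_n(u)$ only under strong duality, which requires (essentially) convexity of $t\mapsto\phi_n(t)$. For subgraph-count rate functions this convexity is known to fail in general (this non-convexity is precisely why Chatterjee and Dembo, and this paper following them, do not use a linear tilt): the paper instead applies Theorem \ref{main-theorem} to the smoothed cutoff $g(x)=nK\,h\bigl(((\widetilde T(x)/n)-t)/\delta\bigr)$, for which the mean-field maximum is bounded by $-\phi_n(t-\delta)$ directly via a three-case argument, with no Legendre transform and hence no duality gap. Your error-term and covering estimates for $\theta T$ are fine as far as they go, but they feed into an inequality whose leading constant you cannot identify with $\psi_n(u)$ without proving convexity of the rate function on the relevant range, which you have not done and which is not expected to hold.

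A secondary omission in your lower bound: for the concentration step you need $T(y)\geq(t+\delta_0)n$ strictly (if $T(y)=tn$ exactly, concentration of $T(Z)$ around $T(y)$ gives only $\mathbb{P}_\nu(T(Z)\geq tn)$ bounded below by a constant \emph{after} you move strictly into the interior), and then you must show $\phi_n(t+\delta_0)\leq\phi_n(t)+o(N^2)$. The paper establishes this continuity estimate by a perturbation argument (increasing the weights on a positive fraction of edges, see (\ref{upper-bound-difference-phi})); your proposal skips it. The feasibility remark about $u<8$ and the constant configuration $(u/8)^{1/3}$ is correct and matches the paper's use of a constant witness to bound $K$.
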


We give the proof of Theorem \ref{Prop-ex-continuous} in Section \ref%
{section-ex2-proof}.

\begin{remark}[of Theorem \protect\ref{main-theorem}]
\label{discuss-improvement}The bounds in Theorem \ref{main-theorem} are not
guaranteed and have no reason to be optimal; they could be improved case by
case by utilizing particular structures of specific problems. 
We provide the following example to show this.
\end{remark}

\subsubsection{\textbf{Mean field approximation on a class of vector spin
models\label{ex-3}}}

Mean field approximation is an important method derived from Physics, and it
has been applied to many different fields. See \cite{AMFM00} or \cite{AS}
for an introduction to this method. Like other methods in statistical
physics, its mathematical rigor is not guaranteed and needs to be verified
for specific models. In \cite{AS} the universality of the mean field
approximation for a class of Potts model is verified. Our next theorem
extends the result in \cite{AS} to a more general setting. We introduce some
notations first. Let $X_{i}$'s be i.i.d. random variables with corresponding
distributions $\mu _{i}$'s supported on a compact set $W_{1}$ in $\mathbb{R}%
^{N}$ for some $N\geq 1$. Define the product measure as $\mu :=\mu
_{1}\times \ldots \times \mu _{n}$. Let $J$ be a real symmetric $N\times N$
matrix, $h$ be a real vector with length $N$, and for each $n\in \mathbb{Z}%
^{+}$ let $A_{n}$ be a real symmetric $n\times n$ matrix. Define the
Hamiltonian $H_{n}^{J,h}(\cdot ):(\mathbb{R^{N}})^{n}\rightarrow \mathbb{R}$
such that for any $x=(x_{1},\ldots ,x_{n})\in (\mathbb{R^{N}})^{n}$ 
\begin{equation}
H_{n}^{J,h}(x):=\frac{1}{2}\sum_{i,j=1}^{n}A_{n}(i,j)x_{i}^{T}Jx_{j}+%
\sum_{i=1}^{n}x_{i}^{T}h\text{.}  \label{def-general-H}
\end{equation}%
For a sequence $\{c_{n}\}_{n\geq 1}$ and a positive sequence $\{a_n\}$, we
say $c_{n}=o(a_n)$ if $\lim_{n\rightarrow \infty }c_{n}/{a_n}=0$, and $%
c_{n}=O(a_n)$ if $\limsup_{n\rightarrow \infty }\left\vert c_{n}\right\vert /%
{a_n}<\infty $. We have the following theorem.

\begin{theorem}
\label{theorem-as-new}If the sequence of matrices $A_{n}$ satisfies%
\begin{equation}
tr(A_{n}^{2})=o(n)\,\,\text{ and }\,\,\sup_{x\in \lbrack 0,1]^{n}}\sum_{i\in
\lbrack n]}\left\vert \sum_{j\in \lbrack n]}A_{n}(i,j)x_{j}\right\vert =O(n),
\label{ex-new-condition}
\end{equation}%
then{\small 
\begin{equation}
\lim_{n\rightarrow \infty }\frac{1}{n}\left[ \log
\int_{W_{1}^{n}}e^{H_{n}^{J,h}(x)}d\mu (x)-\max_{\nu \ll \mu ,\nu =\nu
_{1}\times \nu _{2}\times \ldots \times \nu _{n},}\left\{ H_{n}^{J,h}(m(\nu
))-\sum_{i=1}^{n}D(\nu _{i}\parallel \mu _{i})\right\} \right] =0.
\label{eq-theorem-as-new}
\end{equation}%
}
\end{theorem}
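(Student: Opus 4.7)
The plan is to apply Theorem~\ref{main-theorem} with $f=H_n^{J,h}$, choose a covering scale $\epsilon_n\to 0$ decaying slowly, and show that each of the discrepancy terms $B_1$, $B_2$, $\log|\mathcal{D}(\epsilon_n)|$, together with the lower-bound term $\tfrac{M^2}{2}\sum_i c_{ii}$, is $o(n)$; dividing the upper and lower bounds in Theorem~\ref{main-theorem} by $n$ and letting $n\to\infty$ then yields (\ref{eq-theorem-as-new}).

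First I would record the derivatives. Since $H_n^{J,h}$ is quadratic, direct differentiation gives $f_i(x)(r_i)=r_i^T\bigl(J\sum_{j}A_n(i,j)x_j+h\bigr)$ and $f_{ij}(x)(z_i,r_j)=A_n(i,j)z_i^T J r_j$. Because $W_1\subset\mathbb{R}^N$ is a fixed compact convex set, $M=\operatorname{diam}(W_1)$ is a constant, and I may take $c_{ij}=\|J\|_{\mathrm{op}}|A_n(i,j)|$, $b_i\lesssim\sum_j|A_n(i,j)|+1$, with $a$ growing at most linearly in $\sum_{i,j}|A_n(i,j)|$ plus $n$. The trace hypothesis $\operatorname{tr}(A_n^2)=o(n)$ immediately gives $\sum_{i,j}c_{ij}^2=o(n)$, hence $\sum_i c_{ii}^2=o(n)$ and $\sum_i c_{ii}=o(n)$ by Cauchy--Schwarz; together with $\sum_i\bigl(\sum_j|A_n(i,j)|\bigr)^2\le n\operatorname{tr}(A_n^2)=o(n^2)$ it also yields $\sum_i b_i^2=o(n^2)$. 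The second hypothesis in (\ref{ex-new-condition}), extended coordinate-wise in $\mathbb{R}^N$ by a sign-pattern argument, controls $a$ and $\sum_i b_i$ at size $O(n)$.

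The main obstacle is the construction of $\mathcal{D}(\epsilon_n)$ with $\log|\mathcal{D}(\epsilon_n)|=o(n)$. Observe that $f_i(x)$ depends on $x$ only through $v_i(x):=\sum_j A_n(i,j)x_j\in\mathbb{R}^N$, so the requirement (\ref{covering_condition}) reduces to approximating the linear map $X\mapsto A_n X$ on $X\in W_1^n\subset\mathbb{R}^{n\times N}$ in the Frobenius-like norm $\sum_i\|v_i-v_i'\|^2\le C\epsilon_n^2 n$. I would exploit the symmetric spectral decomposition $A_n=\sum_k\lambda_k u_k u_k^T$: the trace condition bounds the number of eigenvalues with $\lambda_k^2\ge\delta$ by $\operatorname{tr}(A_n^2)/\delta=o(n)/\delta$, and placing an $\eta$-net on each surviving eigen-coordinate yields a covering of size $\exp\bigl(o(n)/\delta\cdot\log(1/\eta)\bigr)$. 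Tuning $\delta$ and $\eta$ so that the discarded spectrum and the net error together contribute at most $\epsilon_n^2 n$ makes $\log|\mathcal{D}(\epsilon_n)|=o(n)$ whenever $\epsilon_n\to 0$ slowly enough (for instance $\epsilon_n=(\operatorname{tr}(A_n^2)/n)^{1/4}$). This spectral truncation, where the trace hypothesis genuinely enters as a complexity bound rather than a smoothness bound, is the technical heart of the argument.

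Finally, substituting the estimates of the first two paragraphs into the explicit formulas (\ref{def-B}) and (\ref{def-D}) for $B_1$ and $B_2$ and carrying out the algebra yields $B_1,B_2=o(n)$ for the same slowly-decaying $\epsilon_n$, while the lower bound (\ref{thm-1-lower-bound}) contributes $-\tfrac{M^2}{2}\sum_i c_{ii}=-o(n)$. Dividing by $n$ and sending $n\to\infty$ gives (\ref{eq-theorem-as-new}).
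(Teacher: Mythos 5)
Your overall strategy---apply Theorem \ref{main-theorem} directly with a slowly decaying $\epsilon_n$ and check that $B_1$, $B_2$, $\log|\mathcal{D}(\epsilon_n)|$ and $\tfrac{M^2}{2}\sum_i c_{ii}$ are all $o(n)$---is exactly the route the paper rules out: Section 4.3 states explicitly that ``if we calculate $B_1$ and $B_2$ in Theorem \ref{main-theorem}, then they are of the wrong order,'' and identifies $\sum_{i,j}b_ib_jc_{ij}$ (coming from (\ref{upper_6})) as the offending term. Your final paragraph asserts that ``carrying out the algebra yields $B_1,B_2=o(n)$,'' but none of the estimates you list controls $M^4\sum_{i,j}b_ib_jc_{ij}$, and the estimates you do list are not enough: from $\sum_i b_i^2=o(n^2)$ and $\sum_{i,j}c_{ij}^2=o(n)$ one only gets bounds like $\sum_{i,j}b_ib_jc_{ij}\le(\sum_ib_i^2)\,\||A_n|\|_{\mathrm{op}}=o(n^2)\cdot o(\sqrt{n})$, which is far from the required $o(n^2)$. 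A related problem afflicts $B_2$: the term $4(\sum_ib_i^2+\epsilon^2n)^{1/2}M^2n^{1/2}\epsilon$ is only $o(n^{3/2})\epsilon$ under your estimates, which forces $\epsilon\ll n^{-1/2}$ and destroys the covering bound. Moreover, your claim that the second hypothesis in (\ref{ex-new-condition}) gives $\sum_ib_i=O(n)$ is unjustified: that hypothesis bounds $\sup_x\sum_i|\sum_jA_n(i,j)x_j|$ with a \emph{common} $x$ across rows, whereas $b_i$ is a supremum over $x\in W_1^n$ taken separately for each $i$, so $\sum_ib_i\asymp\sum_{i,j}|A_n(i,j)|+n$, which condition (\ref{ex-new-condition}) does not force to be $O(n)$ because of sign cancellation.

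What the paper actually does is re-run the proof of Theorem \ref{main-theorem} exploiting the quadratic structure of $H_n^{J,h}$: it replaces the generic bound (\ref{upper_6}) by the tailored estimates (\ref{eq-new-ex-dif-f-tilt}) and (\ref{eq-new-ex-f-tilt-dif-x}), whose proofs hinge on the identity $\widetilde f(x)-\widetilde f(x_\theta^{(i)})=\widetilde f_i(x)(x_i)$ and on the $\ell_1$-type bound (\ref{eq-new-ex-sig-f-i}), which is where the second hypothesis of (\ref{ex-new-condition}) genuinely enters. It then restricts to the high-probability event $\Omega_n$, covers by actual configurations $z_d\in\Omega_n$ (the set $\widetilde{\mathcal{D}}(\epsilon)$) rather than by the abstract gradients, and proves the approximation (\ref{eq-new-ex-final-approximation}) with error $2\sigma_nn+2\sqrt{2N}n\epsilon$. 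Your spectral-truncation construction of $\mathcal{D}(\epsilon)$ with $\log|\mathcal{D}(\epsilon)|=o(n)$ is in the right spirit (it is essentially \cite[Lemma 3.4]{AS}), and the lower bound is indeed a direct consequence of (\ref{thm-1-lower-bound}); but the heart of the theorem is precisely the replacement of $B_1$ and $B_2$ that your proposal omits.
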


\begin{remark}[of Theorem \protect\ref{theorem-as-new}]
If we let $\mu _{i}$'s be the uniform distribution on $\{(1,0,\ldots
,0),\ldots ,(0,0,\ldots ,1)\}$ (each element belongs to $\mathbb{R}^N$ for $%
N \geq 2$ and has a unique nonzero entry), then we get the Potts model, and
Theorem \ref{theorem-as-new} is merely Theorem 1.1 in \cite{AS}.
\end{remark}

Theorem \ref{theorem-as-new} covers a large class of models in statistical
physics. In the simple case of $A_{n}(i,j)=1/n$, it is easy to verify that
condition (\ref{ex-new-condition}) holds, and $N=1,2,3$ correspond to the
mean field Curie-Weiss model, XY model and Heisenberg model respectively.
The validity of the mean field approximation for these mean field models has
long been known, for example see \cite{KT} and \cite{DZ10}. The more
difficult case is when $A_{n}(i,j)$ are not same, see examples and
discussions in \cite[Section 1.3]{AS}. A direct application of Theorem \ref%
{theorem-as-new} is letting $\mu _{i}$ be the uniform distribution on the
unit sphere $S^{N-1}$, which is often studied in statistical physics and is
not covered by \cite{AS}.

If we directly apply Theorem \ref{main-theorem} to the setting above, we
will find that (\ref{eq-theorem-as-new}) is stronger than what we can get.
In order to prove Theorem \ref{theorem-as-new}, we need to incorporate the
special properties of $H_{n}^{J,h}$. We give the proof of Theorem \ref%
{theorem-as-new}\ in Section \ref{section-new-ex-proof}.

\bigskip

We give the proof outline of Theorem \ref{main-theorem} in Section \ref%
{sketch} below, including detailed discussions on the differences from \cite%
{NLD} and important parts in our extensions. The full proof of Theorem \ref%
{main-theorem} is provided in Section \ref{section-proof-main-theorem}. The
proofs of three applications are given in Section \ref%
{section-proof-applications}.

\section{Proof Outline of Theorem \protect\ref{main-theorem}\label{sketch}}

We proceed to sketch the key part of Theorem \ref{main-theorem}, namely
proving the upper bound (\ref{thm-1-upper-bound}), together with the
differences from the proof in \cite{NLD} (see Section \ref%
{section-proof-main-theorem-lower-bound} for the much easier proof of the
lower bound (\ref{thm-1-lower-bound})).

(1) We define a measure $\widetilde{\mu }$ supported on $W$ such that%
\begin{equation}
\frac{d\widetilde{\mu }}{d\mu }(x):=\frac{e^{f(x)}}{\int_{W}e^{f(x)}d\mu (x)}%
\text{, }\forall x\in W.  \label{def-miu-hat}
\end{equation}%
We define $\widehat{\mathrm{x}}_{i}(\cdot ):V\rightarrow W_{i}$ and $%
\widehat{\mathrm{x}}(\cdot ):V\rightarrow W$, such that for every $%
x=(x_{1},\ldots ,x_{n})\in V$,%
\begin{equation}
\widehat{\mathrm{x}}_{i}(x):=\frac{\int_{W_{i}}z_{i}e^{f(x_{1},\ldots
,x_{i-1},z_{i},x_{i+1},\ldots ,x_{n})}d\mu _{i}(z_{i})}{%
\int_{W_{i}}e^{f(x_{1},\ldots ,x_{i-1},z_{i},x_{i+1},\ldots ,x_{n})}d\mu
_{i}(z_{i})}\text{ \ \ and \ \ }\widehat{\mathrm{x}}(x):=(\widehat{\mathrm{x}%
}_{1}(x),\ldots ,\widehat{\mathrm{x}}_{n}(x)).  \label{def-x-hat}
\end{equation}%
For simplicity, we write $\widehat{x}$ and $\widehat{x}_{i}$ for $\widehat{%
\mathrm{x}}(x)$ and $\widehat{\mathrm{x}}_{i}(x)$. For $x\in W$, $\widehat{x}%
_{i}$ is merely $\mathbb{E}_{\widetilde{\mu }}\left[ X_{i}\mid X_{j}=x_{j}%
\text{ for }j\neq i\right] $. The existence of $\widehat{x}$ is guaranteed
by the fact that $W_{i}$ is compact, and obviously $\widehat{x}\in W$ since $%
W_{i}$ is convex. We first do the approximation%
\begin{equation}
f(x)\thickapprox f(\widehat{x}).  \label{eq-approx-1}
\end{equation}
In this sketch we write $L \thickapprox R$ if under $\widetilde{\mu }$ with
high probability $|L-R|$ is controlled, we will not bother to make rigorous
the meaning of $\thickapprox$. \newline
In \cite{NLD}, since each $\mu _{i}$ is supported on $\{0,1\}$, $\widehat{x}$
has the good expression \cite[the expression above Lemma 3.1]{NLD}:%
\begin{equation}
\widehat{x}_{i}=\frac{1}{1+e^{-\Delta _{i}f(x)}},  \label{eq-NLD-xhat}
\end{equation}%
where $\Delta _{i}f(x)$ is the discrete derivative defined as follows 
\begin{equation*}
\Delta _{i}f(x):=f(x_{1},\ldots ,x_{i-1},1,x_{i+1},\ldots
,x_{n})-f(x_{1},\ldots ,x_{i-1},0,x_{i+1},\ldots ,x_{n}).
\end{equation*}%
In our case we do not have a good expression as (\ref{eq-NLD-xhat}).

(2) The next step is to construct a covering set $\mathcal{D}^{\prime
}(\epsilon )$ of $\left\{ \widehat{x}:\text{ }x\in W\right\} $, such that
for each $x\in W$, there exists some $p^{x}=(p_{1}^{x},\ldots ,p_{n}^{x})\in 
\mathcal{D}^{\prime }(\epsilon )$ which is close to $\widehat{x}$. 
Consequently we have%
\begin{equation}
f(\widehat{x})\thickapprox f(p^{x}).  \label{eq-purpose-Dprime}
\end{equation}

In \cite{NLD}, the covering set $\mathcal{D}^{\prime }(\epsilon )$ is
constructed by applying a function $u(x)=1/(1+e^{-x})$ on each point in $%
\mathcal{D}(\epsilon )$ (\cite[3 lines below (3.16)]{NLD}). This makes sense
because $\mathcal{D}(\epsilon )$ is the covering set of the gradient of $%
f(x) $, and $\widehat{x}_{i}$ has the expression (\ref{eq-NLD-xhat}). 
%
%
%
Special properties of this explicit construction is used in \cite{NLD}, such
as $\left\vert u^{\prime }(x)\right\vert \leq 1/4$. In our case we construct 
$\mathcal{D}^{\prime }(\epsilon )$ in the general setting.

(3) Next, for each $i$ and $p=(p_{1},\ldots ,p_{n})\in \mathcal{D}^{\prime
}(\epsilon )$ we construct a measure $\nu _{i}^{p}$ supported on $W_{i}$,
such that $\nu _{i}^{p}\ll \mu _{i}$, $m(\nu _{i}^{p})=p_{i}$, and the
following approximation holds%
\begin{equation}
-\sum_{i=1}^{n}D(\nu _{i}^{p^{x}}\parallel \mu _{i})+\sum_{i=1}^{n}\log (%
\frac{d\nu _{i}^{p^{x}}}{d\mu _{i}}(x)) \thickapprox 0.
\label{eq-purpose-vy}
\end{equation}

In \cite{NLD}, $\mu _{i}$ is Bernoulli$(\frac{1}{2})$ (the Bernoulli
distribution with parameter $\frac{1}{2}$). Therefore for any $%
y=(y_{1},\ldots ,y_{n})\in \lbrack 0,1]^{n}$, the unique measure $\nu
_{i}^{y}$ with $\nu _{i}^{y}\ll \mu _{i}$ and $m(\nu _{i}^{y})=y_{i}$ is
just Bernoulli$(y_{i})$. Hence one can write down the explicit form of the
KL divergence between $\nu _{i}^{y}$ and $\mu _{i}$ as%
\begin{equation*}
D(\nu _{i}^{y}\parallel \mu _{i})=y_{i}\log y_{i}+(1-y_{i})\log
(1-y_{i})+\log 2.
\end{equation*}%
In this way, $-\sum_{i=1}^{n}D(\nu _{i}^{p^{x}}\parallel \mu
_{i})+\sum_{i=1}^{n}\frac{d\nu _{i}^{p^{x}}}{d\mu _{i}}(x)$ becomes \cite[%
(3.13)]{NLD}, which has a good form to analyze. In our case, we build the
measure $\nu _{i}^{p^{x}}$ in Section \ref{section-pre-analysis}, and we
show several general properties of this kind of measures, which help us to
prove our approximation.

(4) Combining (\ref{eq-approx-1}), (\ref{eq-purpose-Dprime}) and (\ref%
{eq-purpose-vy}), we get the following approximation%
\begin{equation}
f(x)\thickapprox f(p^{x})-\sum_{i=1}^{n}D(\nu _{i}^{p^{x}}\parallel \mu
_{i})+\sum_{i=1}^{n}\log (\frac{d\nu _{i}^{p^{x}}}{d\mu _{i}}(x)).
\label{eq-approximation}
\end{equation}

In \cite{NLD}, to bound the error of the above approximation, the authors
decompose the error into $f(x)-f(\widehat{x})$ and \cite[(3.13)]{NLD}, which
does not work in the general case here. In our proof, we find the
decomposition ((\ref{def-I}) and (\ref{def-II})) that works in general.

(5) Note that if we fix $y\in W$, then by the fact that $\int_{W_{i}}\frac{%
d\nu _{i}^{p^{y}}}{d\mu _{i}}(x)d\mu _{i}(x)=1$ we get%
\begin{equation*}
\int_{W}e^{f(p^{y})-\sum_{i=1}^{n}D(\nu _{i}^{p^{y}}\parallel \mu
_{i})+\sum_{i=1}^{n}\log \frac{d\nu _{i}^{p^{y}}}{d\mu _{i}}(x)}d\mu
(x)=f(p^{y})-\sum_{i=1}^{n}D(\nu _{i}^{p^{y}}\parallel \mu _{i}).
\end{equation*}%
Therefore, with above approximations we have that{\small 
\begin{eqnarray*}
\log \int_{W}e^{f(x)}d\mu (x) &=&\log
\int_{W}e^{f(p^{x})-\sum_{i=1}^{n}D(\nu _{i}^{p^{x}}\parallel \mu
_{i})+\sum_{i=1}^{n}\log \frac{d\nu _{i}^{p^{x}}}{d\mu _{i}}(x)}d\mu (x)+%
\text{error terms} \\
&\leq &\log \sum_{p\in \mathcal{D}^{\prime }(\epsilon )}\left(
f(p)-\sum_{i=1}^{n}D(\nu _{i}^{p}\parallel \mu _{i})\right) +\text{error
terms} \\
&\leq &\max_{\nu \ll \mu ,\nu =\nu _{1}\times \nu _{2}\times \ldots \times
\nu _{n}}\left\{ f(m(\nu ))-\sum_{i=1}^{n}D(\nu _{i}\parallel \mu
_{i})\right\} +\log \left\vert \mathcal{D}^{\prime }(\epsilon )\right\vert +%
\text{error terms,}
\end{eqnarray*}%
}where in the last inequality we use the fact that $m(\nu _{i}^{p})=p_{i}$.
The above inequality leads to the desired upper bound.

\section{Proof of Theorem \protect\ref{main-theorem}\label%
{section-proof-main-theorem}}

\subsection{\textbf{The lower bound part of Theorem \protect\ref%
{main-theorem}}}

\label{section-proof-main-theorem-lower-bound}

The idea to prove the lower bound is first to use the Gibbs variational
principle ((\ref{lower_3}) below) on any product measure $\nu $, and then to
approximate the first term on the right-hand side of (\ref{lower_3}) by $%
f(m(\nu ))$ ($m(\nu )$ is defined at (\ref{def-m})), where the error is
controlled by the norms of the second derivatives of $f$.

\begin{proof}
For any $\nu =\nu _{1}\times \nu _{2}\times \ldots \times \nu _{n}$, by the
Gibbs variational principle, we have%
\begin{equation}
\log \int_{W}e^{f(x)}d\mu (x)\geq \int_{W}f(x)d\nu (x)-D(\nu \parallel \mu ).
\label{lower_3}
\end{equation}%
Because $\nu $ and $\mu $ are both product measures, we have the following
decomposition%
\begin{equation}
D(\nu \parallel \mu )=\sum_{i=1}^{n}D(\nu _{i}\parallel \mu _{i})\text{.}
\label{lower_4}
\end{equation}%
Next we approximate $\int_{W}f(x)d\nu (x)$ by $f(m(\nu ))$. For $x\in V$, $%
i\in \lbrack n]$ and $z_{i}\in V_{i}$, define%
\begin{equation}
x_{z_{i}}^{(i)}:=(x_{1},\ldots ,x_{i-1},z_{i},x_{i+1},\ldots ,x_{n}).
\label{def-x-i-bracket}
\end{equation}%
Fix $\theta =(\theta _{1},\ldots ,\theta _{n})\in W$. For $t\in \lbrack 0,1]$%
, by the definition of $m(\nu _{i})$ (\ref{def-mi}) and the fact that $%
f_{i}(tx_{\theta _{i}}^{(i)}+(1-t)m(\nu ))(\cdot )$ is linear, we have $%
\int_{W}f_{i}(tx_{\theta _{i}}^{(i)}+(1-t)m(\nu ))(x_{i}-m(\nu _{i}))d\nu
(x)=0$, which implies that{\small 
\begin{eqnarray}
&&\left\vert \int_{W}f_{i}(tx+(1-t)m(\nu ))(x_{i}-m(\nu _{i}))d\nu
(x)\right\vert  \notag \\
&=&\left\vert \int_{W}\left( f_{i}(tx+(1-t)m(\nu ))-f_{i}(tx_{\theta
_{i}}^{(i)}+(1-t)m(\nu ))\right) (x_{i}-m(\nu _{i}))d\nu (x)\right\vert 
\notag \\
&\leq &\int_{W}c_{ii}\times \left\Vert tx_{i}-t\theta _{i}\right\Vert
_{V_{i}}\times \left\Vert x_{i}-m(\nu _{i})\right\Vert _{V_{i}}d\nu (x)\leq
tc_{ii}M^{2}.  \label{lower_2}
\end{eqnarray}%
}By (\ref{lower_2}) and the expression $f(x)-f(m(\nu
))=\sum_{i=1}^{n}\int_{0}^{1}f_{i}(tx+(1-t)m(\nu ))(x_{i}-m(\nu _{i}))dt$,
we further get%
\begin{equation}
\int_{W}\left( f(x)-f(m(\nu ))\right) d\nu (x)\geq
-\sum_{i=1}^{n}\int_{0}^{1}tc_{ii}M^{2}dt=-\frac{M^{2}}{2}%
\sum_{i=1}^{n}c_{ii}.  \label{lower_5}
\end{equation}%
Plugging (\ref{lower_4}) and (\ref{lower_5}) into (\ref{lower_3}), we get%
\begin{equation*}
\log \int_{W}e^{f(x)}d\mu (x)\geq f(m(\nu ))-\sum_{i=1}^{n}D(\nu
_{i}\parallel \mu _{i})-\frac{M^{2}}{2}\sum_{i=1}^{n}c_{ii}.
\end{equation*}%
Taking the sup over $\left\{ \nu :\nu =\nu _{1}\times \nu _{2}\times \ldots
\times \nu ,\text{ }\nu \ll \mu \right\} $ completes the proof.
\end{proof}

\subsection{\textbf{The upper bound part of Theorem \protect\ref%
{main-theorem}}}

In this subsection we prove the upper bound of Theorem \ref{main-theorem}.
In Section \ref{Section-construction-D-prime-covering}, we construct the
covering of $\left\{ \widehat{x}\text{: }x\in W\right\} $, which plays an
important role in our approximation. In Section \ref{section-pre-analysis}
we show several properties of the measure $\nu ^{p^{x}}$, which is described
in (\ref{eq-purpose-vy}) and is defined at (\ref{d_equation}). We provide
the error bound for the approximation (\ref{eq-approximation}) in Section %
\ref{section-proof-approximation}, and we summarize and finish the proof in
Section \ref{Section-upper-bound-final}.

\subsubsection{\textbf{The construction of} $\mathcal{D}^{\prime }(\protect%
\epsilon )\label{Section-construction-D-prime-covering}$}

In order to construct the covering of $\left\{ \widehat{x}\text{: }x\in
W\right\} $ (defined at (\ref{def-x-hat})), for any $d=(d_{1},d_{2},\ldots
,d_{n})\in \mathcal{D}(\epsilon )$ we construct a corresponding $%
p(d)=(p(d)_{1},p(d)_{2},\ldots ,p(d)_{n})\in W$ in the following way:
recalling that $d_{i}(\cdot )$ is a bounded linear functional from $V_{i}$
to $\mathbb{R}$, let%
\begin{equation}
p(d)_{i}:=\frac{\int_{W_{i}}z_{i}e^{d_{i}(z_{i})}d\mu _{i}(z_{i})}{%
\int_{W_{i}}e^{d_{i}(z_{i})}d\mu _{i}(z_{i})}\text{ \ \ and \ \ }%
p(d):=(p(d)_{1},\ldots ,p(d)_{n}).  \label{def-p-d}
\end{equation}%
The existence of $p(d)$ is guaranteed by the fact that $W_{i}$ is compact,
and obviously $p(d)\in W$ since $W$ is convex. Define%
\begin{equation*}
\mathcal{D}^{\prime }(\epsilon ):=\left\{ p(d):\text{ }d\in \mathcal{D}%
(\epsilon )\right\} .
\end{equation*}%
For each $x$, we choose a $d^{x}$ such that%
\begin{equation}
d^{x}\in \left\{ d\in \mathcal{D}(\epsilon )\text{ s.t. }\sum_{i=1}^{n}\left%
\Vert f_{i}(x)-d_{i}\right\Vert ^{2}\leq \epsilon ^{2}n\right\} ,
\label{def_d}
\end{equation}%
where if the set on the right-hand side contains more than one element, we
just choose any one in it and fix the choice. Using (\ref{def-p-d}) we can
further define%
\begin{equation}
p^{x}:=(p_{1}^{x},p_{2}^{x},\ldots ,p_{n}^{x})\text{, \ \ \ where \ }%
p_{i}^{x}:=p(d^{x})_{i}\text{ \ \ }\forall i\in \lbrack n]\text{.}
\label{def-p-x}
\end{equation}%
In the following we show that $\mathcal{D}^{\prime }(\epsilon )$ is a good
covering of $\left\{ \widehat{x}\text{: }x\in W\right\} $, by bounding the
term $\sum_{i=1}^{n}\left\Vert \widehat{x}_{i}-p_{i}^{x}\right\Vert
_{V_{i}}^{2}$. Recall that $d_{i}^{x}(\cdot )$ is a linear functional from $%
W_{i}$ to $\mathbb{R}$. Let%
\begin{equation*}
p_{i}^{x}(t):=\frac{%
\int_{W_{i}}z_{i}e^{tf(x_{z_{i}}^{(i)})+(1-t)d_{i}^{x}(z_{i})}d\mu
_{i}(z_{i})}{\int_{W_{i}}e^{tf(x_{z_{i}}^{(i)})+(1-t)d_{i}^{x}(z_{i})}d\mu
_{i}(z_{i})}\text{.}
\end{equation*}%
Then $p_{i}^{x}(t)$ is an interpolation between $p_{i}^{x}$ and $\widehat{x}%
_{i}$, since it is easy to verify that%
\begin{equation}
p_{i}^{x}(0)=p_{i}^{x}\text{, \ \ \ }p_{i}^{x}(1)=\widehat{x}_{i}.
\label{eq-p-i-t-0-1}
\end{equation}%
Let%
\begin{equation}
e(x,i):=\frac{\widehat{x}_{i}-p_{i}^{x}}{\left\Vert \widehat{x}%
_{i}-p_{i}^{x}\right\Vert _{V_{i}}}\text{, \ \ \ }V_{x,i}:=\{ke(x,i):k\in 
\mathbb{R}\}.  \label{def-e-1-x-i}
\end{equation}%
Then clearly $V_{x,i}$ is a $1$-dimension subspace of $V_{i}$. Define a
linear functional $g_{0}:V_{x,i}\rightarrow \mathbb{R}$ as%
\begin{equation}
g_{0}(ke(x,i))=k,  \label{def-g0}
\end{equation}%
and then obviously $\left\Vert g_{0}\right\Vert =1$. By the Hahn-Banach
theorem, we can extend $g_{0}$ to $g$, a linear functional from $V_{i}$ to $%
\mathbb{R}$ such that%
\begin{equation}
g(z_{i})=g_{0}(z_{i})\text{ \ }\forall \text{ }z_{i}\in V_{x,i},\text{ \ \ \
\ }\left\Vert g\right\Vert =\left\Vert g_{0}\right\Vert =1.  \label{def-g}
\end{equation}%
Thus for any $z_{i}^{(1)},z_{i}^{(2)}\in W_{i}$ we have%
\begin{equation}
\left\vert g(z_{i}^{(1)})-g(z_{i}^{(2)})\right\vert \leq \left\Vert
z_{i}^{(1)}-z_{i}^{(2)}\right\Vert _{V_{i}}.  \label{condition-g-z1-z2}
\end{equation}%
Using the fact that $f(\cdot )$ is bounded and Fr\'{e}chet differentiable,
and $W_{i}$ is compact, it is easy to see that $g(p_{i}^{x}(t))$ is
differentiable with respect to $t$. By the definition of $p_{i}^{x}(t)$,
after some algebra we arrive at%
\begin{equation}
\frac{dg(p_{i}^{x}(t))}{dt}=\left( \frac{%
\int_{W_{i}}g(z_{i})e^{tf(x_{z_{i}}^{(i)})+(1-t)d_{i}^{x}(z_{i})}d\mu
_{i}(z_{i})}{\int_{W_{i}}e^{tf(x_{z_{i}}^{(i)})+(1-t)d_{i}^{x}(z_{i})}d\mu
_{i}(z_{i})}\right) _{t}^{\prime }=\mathbb{E}_{\phi
_{t}^{i}}[(f(x_{Z_{i}}^{(i)})-d_{i}^{x}(Z_{i}))(g(Z_{i})-\mathbb{E}_{\phi
_{t}^{i}}[g(Z_{i})])],  \label{derivative-p-x}
\end{equation}%
where the expectation is taken with respect to $Z_{i}$, which obeys the
measure $\phi _{t}^{i}\ll \mu _{i}$ defined as%
\begin{equation*}
\frac{d\phi _{t}^{i}}{d\mu _{i}}(z_{i}):=\frac{%
e^{tf(x_{z_{i}}^{(i)})+(1-t)d_{i}^{x}(z_{i})}}{%
\int_{W_{i}}e^{tf(x_{z_{i}}^{(i)})+(1-t)d_{i}^{x}(z_{i})}d\mu _{i}(z_{i})}.
\end{equation*}%
Recall that $\theta =(\theta _{1},\ldots ,\theta _{n})$ is a fixed point in $%
W$. It is easy to check that 
\begin{equation*}
(f(tx_{z_{i}}^{(i)}+(1-t)x_{\theta _{i}}^{(i)})-d_{i}^{x}(tz_{i}+(1-t)\theta
_{i}))_{t}^{\prime }=\left( f_{i}(tx_{z_{i}}^{(i)}+(1-t)x_{\theta
_{i}}^{(i)})-d_{i}^{x}\right) (z_{i}-\theta _{i}).
\end{equation*}%
Therefore, writing the following difference as the integral of derivative,
we can see that for any $z_{i}\in W_{i}$,{\small 
\begin{eqnarray}
&&\left\vert f(x_{z_{i}}^{(i)})-d_{i}^{x}(z_{i})-(f(x_{\theta
_{i}}^{(i)})-d_{i}^{x}(\theta _{i}))\right\vert  \notag \\
&=&\left\vert \int_{0}^{1}\left( f_{i}(tx_{z_{i}}^{(i)}+(1-t)x_{\theta
_{i}}^{(i)})-d_{i}^{x}\right) (z_{i}-\theta _{i})dt\right\vert  \notag \\
&\leq &\int_{0}^{1}\left\vert \left( f_{i}(tx_{z_{i}}^{(i)}+(1-t)x_{\theta
_{i}}^{(i)})-f_{i}(x)\right) (z_{i}-\theta _{i})\right\vert
dt+\int_{0}^{1}\left\vert \left( f_{i}(x)-d_{i}^{x}\right) (z_{i}-\theta
_{i})\right\vert dt  \notag \\
&\leq &c_{ii}M^{2}+\left\Vert f_{i}(x)-d_{i}^{x}\right\Vert M.
\label{eq-bound-f-difference-4-terms}
\end{eqnarray}%
}Noting that $\mathbb{E}_{\phi _{t}^{i}}[g(Z_{i})-\mathbb{E}_{\phi _{t}^{i}}%
\left[ g(Z_{i})\right] ]=0$, we have%
\begin{equation}
\mathbb{E}_{\phi _{t}^{i}}[(f(x_{\theta _{i}}^{(i)})-d_{i}^{x}(\theta
_{i}))(g(Z_{i})-\mathbb{E}_{\phi _{t}^{i}}[g(Z_{i})])]=0.
\label{eq-construction-1}
\end{equation}%
From (\ref{intro-M}) and (\ref{condition-g-z1-z2}) it is clear that\ for
each $z_{i}^{(1)},z_{i}^{(2)}\in W_{i}$ we have \TEXTsymbol{\vert}$%
g(z_{i}^{(1)})-g(z_{i}^{(2)})|\leq M$, which implies that%
\begin{equation}
\mathbb{E}_{\phi _{t}^{i}}[|g(Z_{i})-\mathbb{E}_{\phi _{t}^{i}}\left[
g(Z_{i})\right] |]\leq M.  \label{eq-construction-2}
\end{equation}%
Subtracting $\mathbb{E}_{\phi _{t}^{i}}[(f(x_{\theta
_{i}}^{(i)})-d_{i}^{x}(\theta _{i}))(g(Z_{i})-\mathbb{E}_{\phi
_{t}^{i}}[g(Z_{i})])]$ from the right-hand side of (\ref{derivative-p-x}),
with (\ref{eq-bound-f-difference-4-terms}), (\ref{eq-construction-1}) and (%
\ref{eq-construction-2}) we have%
\begin{equation}
\left\vert \frac{dg(p_{i}^{x}(t))}{dt}\right\vert \leq
c_{ii}M^{3}+\left\Vert f_{i}(x)-d_{i}^{x}\right\Vert M^{2},
\label{lowcomplexity_3}
\end{equation}%
and consequently by (\ref{def-e-1-x-i}), (\ref{def-g0}) and (\ref{def-g}) we
see that%
\begin{equation}
\left\Vert \widehat{x}_{i}-p_{i}^{x}\right\Vert _{V_{i}}=g(\widehat{x}%
_{i}-p_{i}^{x})=g(p_{i}^{x}(1)-p_{i}^{x}(0))\leq c_{ii}M^{3}+\left\Vert
f_{i}(x)-d_{i}^{x}\right\Vert M^{2}.  \label{eq-construction-3}
\end{equation}%
Therefore from (\ref{def_d}), (\ref{eq-construction-3}) and the basic
inequalities $(a+b)^{2}\leq 2a^{2}+2b^{2}$, $(a^{2}+b^{2})^{1/2}\leq a+b$,
we have%
\begin{equation}
\left( \sum_{i=1}^{n}\left\Vert \widehat{x}_{i}-p_{i}^{x}\right\Vert
_{V_{i}}^{2}\right) ^{\frac{1}{2}}\leq \left( \sum_{i=1}^{n}\left(
c_{ii}M^{3}+\left\Vert f_{i}(x)-d_{i}^{x}\right\Vert M^{2}\right)
^{2}\right) ^{\frac{1}{2}}\leq \sqrt{2}M^{3}\left(
\sum_{i=1}^{n}c_{ii}^{2}\right) ^{\frac{1}{2}}+\sqrt{2}M^{2}n^{\frac{1}{2}%
}\epsilon .  \label{lowcomplexity_5}
\end{equation}

\subsubsection{\textbf{The} \textbf{construction and properties of the
measure} $\protect\nu ^{p}\label{section-pre-analysis}$}

Before constructing the measure $\nu ^{p}$, let us take a look at the term%
\begin{equation}
\max_{\nu \ll \mu ,\nu =\nu _{1}\times \nu _{2}\times \ldots \times \nu
_{n}}\left\{ f(m(\nu ))-\sum_{i=1}^{n}D(\nu _{i}\parallel \mu _{i})\right\} .
\label{term_max}
\end{equation}%
In order to achieve the maximum, a natural question one might ask is: when $%
(m(\nu ))$ is fixed, what is the minimum value of $\sum_{i=1}^{n}D(\nu
_{i}\parallel \mu _{i})$? For every $y=(y_{1},\ldots ,y_{n})\in W$, we
consider the following problem:%
\begin{equation}
\min \left\{ \sum_{i=1}^{n}D(\nu _{i}\parallel \mu _{i})\text{: }\nu \text{
is a product probability measure with }\nu \ll \mu \text{ and }m(\nu
)=y\right\} .\text{ }  \label{measure}
\end{equation}%
In this subsection, we show several properties of the minimizer of (\ref%
{measure}). We prove that

\begin{proposition}
\label{measure_v}If a measure $\nu ^{y}=\nu _{1}^{y}\times \nu
_{2}^{y}\times \ldots \times \nu _{n}^{y}$ satisfies that for each $i\in
\lbrack n]$,%
\begin{equation}
\nu _{i}^{y}\ll \mu _{i}\text{,\ \ }m(\nu _{i}^{y})=y_{i},\text{ \ and }%
\frac{d\nu _{i}^{y}}{d\mu _{i}}(z_{i})=e^{R_{i}(z_{i})}\text{ for a linear
functional }R_{i}(\cdot ):V_{i}\rightarrow \mathbb{R},  \label{v-exp-form}
\end{equation}%
then $\nu ^{y}$ achieves the minimum in (\ref{measure}).
\end{proposition}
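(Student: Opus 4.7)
The plan is to invoke the classical Pythagorean identity for the Kullback--Leibler divergence relative to an exponential tilt. Since both $\nu$ and $\mu$ are product measures the divergence decomposes as $\sum_i D(\nu_i\|\mu_i)$ (this is already used in (\ref{lower_4})), so it suffices to handle each coordinate $i$ separately: given any competitor $\nu_i \ll \mu_i$ with $m(\nu_i)=y_i$, I want to show $D(\nu_i\|\mu_i)\geq D(\nu_i^y\|\mu_i)$.

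First, because $\frac{d\nu_i^y}{d\mu_i}=e^{R_i}$ is strictly positive on $W_i$ and $\nu_i\ll\mu_i$, we have $\nu_i\ll\nu_i^y$ with $\frac{d\nu_i}{d\nu_i^y}(z_i)=\frac{d\nu_i}{d\mu_i}(z_i)\,e^{-R_i(z_i)}$. Expanding $\log$ and integrating against $\nu_i$ yields the algebraic identity
\begin{equation*}
D(\nu_i\|\mu_i) \;=\; D(\nu_i\|\nu_i^y) \;+\; \int_{V_i} R_i(z_i)\,d\nu_i(z_i).
\end{equation*}

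Second, since $R_i$ is a bounded linear functional on $V_i$, the defining property (\ref{def-mi}) of the mean applies, giving $\int R_i(z_i)\,d\nu_i(z_i)=R_i(m(\nu_i))=R_i(y_i)=R_i(m(\nu_i^y))=\int R_i(z_i)\,d\nu_i^y(z_i)$. Unwinding the definition of $D(\nu_i^y\|\mu_i)$ with $\log(d\nu_i^y/d\mu_i)=R_i$, the right-hand integral equals $D(\nu_i^y\|\mu_i)$. Substituting back,
\begin{equation*}
D(\nu_i\|\mu_i) \;=\; D(\nu_i\|\nu_i^y) \;+\; D(\nu_i^y\|\mu_i) \;\geq\; D(\nu_i^y\|\mu_i),
\end{equation*}
where the inequality is the nonnegativity of KL divergence. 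Summing over $i$ finishes the proof.

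The argument is essentially an exponential-family calculation, so the only delicate points are measure-theoretic: $R_i$ must be integrable against both $\nu_i$ and $\nu_i^y$ (which follows from its boundedness on the compact set $W_i$), and the mean-matching step requires $R_i$ to fall within the class of functionals for which (\ref{def-mi}) applies, i.e. bounded linear functionals on $V_i$. I expect no genuine obstacle beyond recording these integrability checks, since the hypothesis on $\nu_i^y$ is tailored precisely to make the density a linear functional so that the identity $\int R_i\,d\nu=R_i(m(\nu))$ is available on both sides.
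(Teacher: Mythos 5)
Your proof is correct and is essentially the same argument as the paper's: both rest on the Pythagorean identity $D(\nu_i\parallel\mu_i)=D(\nu_i\parallel\nu_i^{y})+D(\nu_i^{y}\parallel\mu_i)$, which holds because the cross term $\int R_i\,d\nu_i$ equals $\int R_i\,d\nu_i^{y}=D(\nu_i^{y}\parallel\mu_i)$ by linearity of $R_i$ and the matching means, followed by nonnegativity of the KL divergence. The paper phrases it as $0\leq D(\widetilde{\nu}_i^{y}\parallel\nu_i^{y})=D(\widetilde{\nu}_i^{y}\parallel\mu_i)-D(\nu_i^{y}\parallel\mu_i)$, which is the identical computation.
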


\begin{proof}
For each $i$, assume that $\nu _{i}^{y}$ satisfies (\ref{v-exp-form}). For
any other measure $\widetilde{\nu }_{i}^{y}$ with $m(\widetilde{\nu }%
_{i}^{y})=y_{i}$ and $\widetilde{\nu }_{i}^{y}\ll \mu _{i}$, since $\log 
\frac{d\nu _{i}^{y}}{d\mu _{i}}(\cdot )$ is linear by (\ref{v-exp-form}), we
have%
\begin{equation}
\int_{W_{i}}\log \frac{d\nu _{i}^{y}}{d\mu _{i}}(z_{i})d\widetilde{\nu }%
_{i}^{y}(z_{i})=\int_{W_{i}}\log \frac{d\nu _{i}^{y}}{d\mu _{i}}(z_{i})d\nu
_{i}^{y}(z_{i})=D(\nu _{i}^{y}\parallel \mu _{i}).
\label{eq-newproof-preanalysis-1}
\end{equation}%
Combining (\ref{eq-newproof-preanalysis-1}) and the fact that $D(\widetilde{%
\nu }_{i}^{y}\parallel \nu _{i}^{y})\geq 0$, we have%
\begin{equation*}
0\leq D(\widetilde{\nu }_{i}^{y}\parallel \nu _{i}^{y})=\int_{W_{i}}\frac{d%
\widetilde{\nu }_{i}^{y}}{d\mu _{i}}(z_{i})\log \frac{\frac{d\widetilde{\nu }%
_{i}^{y}}{d\mu _{i}}(z_{i})}{\frac{d\nu _{i}^{y}}{d\mu _{i}}(z_{i})}d\mu
_{i}(z_{i})=D(\widetilde{\nu }_{i}^{y}\parallel \mu _{i})-D(\nu
_{i}^{y}\parallel \mu _{i}),
\end{equation*}%
and it completes the proof.
\end{proof}

Now let us consider the properties of $\nu ^{y}$ satisfying (\ref{v-exp-form}%
). From (\ref{v-exp-form}) we can see that $\forall z_{i}\in W_{i}$,%
\begin{equation}
\log \frac{d\nu _{i}^{y}}{d\mu _{i}}(z_{i})=R_{i}(z_{i}).
\label{distribution_2}
\end{equation}%
Recalling that $\mathbb{E}_{\nu _{i}^{y}}[Z_{i}]=m(\nu _{i}^{y})$, by (\ref%
{v-exp-form}) and (\ref{distribution_2}), we see that%
\begin{equation}
D(\nu _{i}^{y}\parallel \mu _{i})=\int_{W_{i}}\frac{d\nu _{i}^{y}}{d\mu _{i}}%
(z_{i})\log \frac{d\nu _{i}^{y}}{d\mu _{i}}(z_{i})d\mu
_{i}(z_{i})=\int_{W_{i}}R_{i}(z_{i})d\nu _{i}^{y}(z_{i})=R_{i}(m(\nu
_{i}^{y})).  \label{distribution_4}
\end{equation}%
Note that, we did not prove that for any $y\in W$ there exists a measure $%
\nu ^{y}$ satisfying (\ref{v-exp-form}). For each $p\in \mathcal{D}^{\prime
}(\epsilon )$, we construct $\nu ^{p}=(\nu _{1}^{p},\ldots ,\nu _{n}^{p})$
directly at (\ref{d_equation}) below, and show that it satisfies (\ref%
{v-exp-form}), and hence it shares the property (\ref{distribution_4}). For
each $p=p(d)\in \mathcal{D}^{\prime }(\epsilon )$, recalling that $d_{i}$ is
a linear functional from $V_{i}$ to $\mathbb{R}$, we can define $\nu
_{i}^{p} $, a measure on $V_{i}$, as%
\begin{equation}
\frac{d\nu _{i}^{p}}{d\mu _{i}}(z_{i}):=\frac{e^{d_{i}(z_{i})}}{%
\int_{W_{i}}e^{d_{i}(z_{i})}d\mu _{i}(z_{i})}=e^{\lambda
(p_{i})+d_{i}(z_{i})},  \label{d_equation}
\end{equation}%
where $\lambda (p_{i})$ is a normalizing number satisfies that $e^{\lambda
(p_{i})}=(\int_{W_{i}}e^{d_{i}(z_{i})}d\mu _{i}(z_{i}))^{-1}$. From the
construction of $p(d)$ in (\ref{def-p-d}), it is easy to see that%
\begin{equation*}
\int_{W_{i}}z_{i}d\nu _{i}^{p}(z_{i})=\int_{W_{i}}z_{i}e^{\lambda
(p_{i})+d_{i}(z_{i})}d\mu _{i}(z_{i})=p_{i}.
\end{equation*}%
The same approach we used in (\ref{distribution_4}) can be applied here to
show that%
\begin{equation}
D(\nu _{i}^{p}\parallel \mu _{i})=\lambda (p_{i})+d_{i}(p_{i}),
\label{kl_equation}
\end{equation}%
and consequently%
\begin{equation}
\sum_{i=1}^{n}\left( \lambda (p_{i}^{x})+d_{i}^{x}(x_{i})\right)
-\sum_{i=1}^{n}D(\nu _{i}^{p^{x}}\parallel \mu
_{i})=\sum_{i=1}^{n}d_{i}^{x}(x_{i}-p_{i}^{x}).  \label{eq-kl-another-form}
\end{equation}

\subsubsection{\textbf{The approximation (\protect\ref{eq-approximation}) 
\label{section-proof-approximation}}}

Due to (\ref{eq-kl-another-form}), for the approximation (\ref%
{eq-approximation})\textbf{\ }it suffices to bound%
\begin{equation*}
\left\vert f(p^{x})+\sum_{i=1}^{n}d_{i}^{x}(x_{i}-p_{i}^{x})-f(x)\right\vert
\leq \Delta _{1}+\Delta _{2}\text{,}
\end{equation*}%
where%
\begin{eqnarray}
\text{$\Delta _{1}$}&:= &\left\vert f(\widehat{x})-f(x)\right\vert
+\left\vert \sum_{i=1}^{n}f_{i}(x_{\theta _{i}}^{(i)})(\widehat{x}%
_{i}-x_{i})\right\vert ,  \label{def-I} \\
\text{$\Delta _{2}$}&:= &\left\vert f(\widehat{x})-f(p^{x})\right\vert
+\left\vert \sum_{i=1}^{n}d_{i}^{x}(\widehat{x}_{i}-p_{i}^{x})\right\vert
+\left\vert \sum_{i=1}^{n}\left( d_{i}^{x}-f_{i}(x_{\theta
_{i}}^{(i)})\right) (\widehat{x}_{i}-x_{i})\right\vert .  \label{def-II}
\end{eqnarray}%
So the proof of the approximation (\ref{eq-approximation}) consists of the
bounds for $\Delta _{1}$ and $\Delta _{2}$, which will be given separately
below.

\paragraph{\textbf{Bound for} $\Delta _{1}$}

Recall the definition of $\widetilde{\mu }$ (\ref{def-miu-hat}). We show the
following proposition.

\begin{proposition}
\label{Prop-diff-g}Let all notations be as in Theorem \ref{main-theorem}. We
have the following bound%
\begin{equation}
\mathbb{E}_{\widetilde{\mu }}\left[ \left( f(X)-f(\widehat{X})\right) ^{2}%
\right] \leq M^{2}\left(
a\sum_{i=1}^{n}c_{ii}+\sum_{i=1}^{n}b_{i}^{2}\right) +M^{4}\left(
a\sum_{i,j=1}^{n}c_{ij}^{2}+\sum_{i,j=1}^{n}b_{i}b_{j}c_{ij}\right) .
\label{Prop-3-for-f}
\end{equation}
\end{proposition}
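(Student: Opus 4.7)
\medskip

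\noindent The strategy is to decompose $f(X) - f(\widehat{X})$ into a main ``linear'' part plus a higher-order remainder, exploiting the crucial identity $\mathbb{E}_{\widetilde{\mu}}[X_i - \widehat{X}_i \mid X_{-i}] = 0$ (immediate from the definition \eqref{def-x-hat}) to kill cross terms in the main part. This is the natural generalization of the Bernoulli argument in \cite{NLD}, where one avoids the explicit logistic formula \eqref{eq-NLD-xhat} by working directly with the conditional-mean characterization of $\widehat{X}_i$.

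\medskip

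\noindent \textbf{Step 1 (Telescoping and first-order approximation).} I would write
\begin{equation*}
f(X) - f(\widehat{X}) \;=\; \sum_{i=1}^{n} \bigl[ f(Y^{(i)}) - f(Y^{(i-1)}) \bigr],
\qquad Y^{(i)} := (X_1,\ldots,X_i,\widehat{X}_{i+1},\ldots,\widehat{X}_n),
\end{equation*}
so $Y^{(0)} = \widehat{X}$ and $Y^{(n)} = X$. Each difference equals
$\int_0^1 f_i(Y^{(i)}(t))(X_i - \widehat{X}_i)\,dt$ by Fr\'echet differentiability, where $Y^{(i)}(t)$ linearly interpolates coordinate $i$ between $\widehat{X}_i$ and $X_i$. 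I would then replace $f_i(Y^{(i)}(t))$ by the $X_{-i}$-measurable quantity $f_i(X^{(i)}_{\widehat{X}_i})$ (using notation \eqref{def-x-i-bracket}), which removes the $X_i$-dependence from the coefficient. The replacement error is controlled coordinate-by-coordinate via the operator norms $c_{ij}$, analogously to the estimate \eqref{eq-bound-f-difference-4-terms} carried out earlier in the excerpt.

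\medskip

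\noindent \textbf{Step 2 (Orthogonality of the main term).} Set $L_i := f_i(X^{(i)}_{\widehat{X}_i})(X_i - \widehat{X}_i)$. Since the coefficient is $X_{-i}$-measurable and $\mathbb{E}_{\widetilde{\mu}}[X_i - \widehat{X}_i \mid X_{-i}] = 0$, I get $\mathbb{E}_{\widetilde{\mu}}[L_i] = 0$. Moreover, for $i < j$, conditioning on $X_{-j}$ leaves $L_i$ fixed and $L_j$ linear in $(X_j - \widehat{X}_j)$ with $X_{-j}$-measurable coefficient, so $\mathbb{E}_{\widetilde{\mu}}[L_i L_j] = 0$. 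Combined with the pointwise bound $|L_i| \leq b_i M$, this gives
\begin{equation*}
\mathbb{E}_{\widetilde{\mu}}\!\left[\Bigl(\sum_i L_i\Bigr)^{\!2}\right]
\;=\; \sum_i \mathbb{E}_{\widetilde{\mu}}[L_i^2] \;\leq\; M^{2}\sum_{i=1}^{n} b_i^{2},
\end{equation*}
which accounts for the $M^{2}\sum b_i^2$ term on the right-hand side of \eqref{Prop-3-for-f}.

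\medskip

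\noindent \textbf{Step 3 (The remainder and the $a$-factors).} Writing $R := f(X) - f(\widehat{X}) - \sum_i L_i$, the Taylor-type bookkeeping from Step~1 gives a pointwise control of the form
\begin{equation*}
|R| \;\lesssim\; \sum_{i} c_{ii}\,\|X_i-\widehat{X}_i\|_{V_i}^{2}
\;+\; \sum_{i<j} c_{ij}\,\|X_i-\widehat{X}_i\|_{V_i}\,\|X_j-\widehat{X}_j\|_{V_j}.
\end{equation*}
Then I expand
\begin{equation*}
\mathbb{E}_{\widetilde{\mu}}\!\left[(f(X)-f(\widehat{X}))^{2}\right]
= \mathbb{E}_{\widetilde{\mu}}\!\left[\Bigl(\sum_i L_i\Bigr)^{\!2}\right]
+ 2\,\mathbb{E}_{\widetilde{\mu}}\!\left[R \textstyle\sum_i L_i\right]
+ \mathbb{E}_{\widetilde{\mu}}[R^2],
\end{equation*}
and bound the last two pieces. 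The mixed term produces the $M^{3}\sum b_i c_{ij}$-type contributions that combine (after Cauchy--Schwarz and using $\|X_i - \widehat X_i\|\le M$) into the $M^{4}\sum b_i b_j c_{ij}$ component. For the pure remainder $\mathbb{E}_{\widetilde\mu}[R^2]$, rather than squaring the pointwise bound (which would give the far too crude $M^{4}(\sum c_{ij})^{2}$), I would instead bound one factor of $R$ pointwise and, in the other factor, use the uniform bound $|f(X)-f(\widehat{X})|\le 2a$ to absorb an $L^\infty$-norm. This is where the factor $a$ enters, producing the $M^{2}a\sum c_{ii}$ and $M^{4}a\sum c_{ij}^{2}$ summands.

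\medskip

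\noindent \textbf{Main obstacle.} The difficult step is controlling the remainder $\mathbb{E}_{\widetilde\mu}[R^2]$ so as to obtain the two \emph{separate} groups $M^{2}(a\sum c_{ii}+\sum b_i^{2})$ and $M^{4}(a\sum c_{ij}^{2}+\sum b_i b_j c_{ij})$. A naive squared-norm estimate couples indices and yields a cross-term $(\sum c_{ij})^{2}$ that is unacceptable for the applications (since $n$ is large). Resolving this requires trading one factor of the second-order bound against the $L^\infty$ bound $a$ on $f$, together with a careful repeated application of the conditional-mean identity for $\widehat X_i$. Once those tradeoffs are organized correctly, the remaining arithmetic -- $(A+B)^2\le 2A^2+2B^2$ and collecting like terms -- yields \eqref{Prop-3-for-f}.
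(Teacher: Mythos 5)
There is a genuine gap, in two places. First, the orthogonality claimed in Step 2 is false. With $L_i = f_i(X^{(i)}_{\widehat{X}_i})(X_i - \widehat{X}_i)$, the coefficient $f_i(X^{(i)}_{\widehat{X}_i})$ and the centering $\widehat{X}_i$ are functions of $X_{-i}$, hence they depend on $X_j$ for every $j\neq i$. So conditioning on $X_{-j}$ does \emph{not} leave $L_i$ fixed: $L_i$ is a nonconstant function of $X_j$, and $\mathbb{E}_{\widetilde{\mu}}[L_iL_j]$ does not vanish; it is a covariance controlled by the sensitivity of $L_i$ to $X_j$ (involving $c_{ij}$ and the derivative of $\widehat{X}_i$ in $X_j$), and these cross terms must be estimated, not discarded. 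This is exactly why the paper (following \cite{NLD}) never attempts an exact orthogonal decomposition of $h:=f(X)-f(\widehat{X})$: it instead writes $\mathbb{E}_{\widetilde{\mu}}[h^2]=\int_0^1\sum_i\mathbb{E}_{\widetilde{\mu}}[u_i(t,X)(X_i-\widehat{X}_i)h(X)]\,dt$ with $u_i(t,x)=f_i(tx+(1-t)\widehat{x})$, keeps one factor of $h$ whole, and for each $i$ substitutes $X\mapsto X^{(i)}_{\theta_i}$ in the other factors so that the conditional-mean identity kills the main term and only two error terms per $i$ remain.

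Second, your remainder estimate cannot reach the stated constants. Bounding $|R|$ pointwise by $\sum_{i,j}c_{ij}\|X_i-\widehat{X}_i\|\,\|X_j-\widehat{X}_j\|\le M^2\sum_{i,j}c_{ij}$ and trading one factor against the $L^\infty$ bound $2a$ yields a term of order $aM^2\sum_{i\neq j}c_{ij}$, whereas the proposition has only $aM^2\sum_i c_{ii}$ and $aM^4\sum_{i,j}c_{ij}^2$. In the applications $\sum_{i,j}c_{ij}$ is of order $n$ while $\sum_{i,j}c_{ij}^2$ is of order $1$, so this loss is fatal. The off-diagonal $c_{ij}^2$ in the paper arises because the perturbation of the $j$th coordinate in $u_i(t,X)-u_i(t,X^{(i)}_{\theta_i})$ is $(1-t)(\widehat{X}_j-\widehat{(X^{(i)}_{\theta_i})}_j)$, which is itself of size $O(c_{ij}M^3)$ by the Fr\'echet-derivative bound $\|\,d\widehat{\mathrm{x}}_j/dx_i\,\|\le c_{ij}M^2$ on the Gibbs conditional mean; multiplying this by the sensitivity $c_{ij}$ of $f_i$ to that coordinate produces $c_{ij}^2M^3$. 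Your proposal never establishes or uses any bound on the derivative of $\widehat{\mathrm{x}}_j$, and without it neither the $aM^4\sum c_{ij}^2$ nor the $M^4\sum b_ib_jc_{ij}$ group is attainable.
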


\begin{proof}
Let%
\begin{equation*}
h(X):=f(X)-f(\widehat{X}),
\end{equation*}%
and then clearly%
\begin{equation}
\left\vert h(X)\right\vert \leq 2a.  \label{intro-h-a}
\end{equation}%
From the definition of $\widehat{x}$ in (\ref{def-x-hat}), we have%
\begin{equation}
\widehat{\mathrm{x}}_{j}(x)=\frac{\int_{W_{j}}z_{j}e^{f(x_{z_{j}}^{(j)})}d%
\mu _{j}(z_{j})}{\int_{W_{j}}e^{f(x_{z_{j}}^{(j)})}d\mu _{j}(z_{j})}.
\label{eq-x-hat}
\end{equation}%
Note that $\widehat{\mathrm{x}}_{j}(\cdot )$ is a functional from $V$ to $%
V_{j}$. We claim that $\widehat{\mathrm{x}}_{j}(\cdot )$ is Fr\'{e}chet
differentiable (in (\ref{def-f-differentiable}) we just define the notion of
Fr\'{e}chet differentiability for real-valued functional. We can define it
for vector-valued functional similarly, see \cite[chapter 2]{CNVS}). For $%
r\in V$ we let 
\begin{equation*}
r_{0}^{(j)}:=(r_{1},\ldots ,r_{j-1},0,r_{j+1},\ldots ,r_{n}).
\end{equation*}%
Define $\phi _{j}(x)(\cdot ):V\rightarrow V_{j}$ as {\small 
\begin{equation*}
\phi _{j}(x)(r):=\frac{\int_{W_{j}}z_{j}f^{\prime
}(x_{z_{j}}^{(j)})(r_{0}^{(j)})e^{f(x_{z_{j}}^{(j)})}d\mu _{j}(z_{j})}{%
\int_{W_{j}}e^{f(x_{z_{j}}^{(j)})}d\mu _{j}(z_{j})}-\frac{%
\int_{W_{j}}z_{j}e^{f(x_{z_{j}}^{(j)})}d\mu _{j}(z_{j})\int_{W_{j}}f^{\prime
}(x_{z_{j}}^{(j)})(r_{0}^{(j)})e^{f(x_{z_{j}}^{(j)})}d\mu _{j}(z_{j})}{%
\left( \int_{W_{j}}e^{f(x_{z_{j}}^{(j)})}d\mu _{j}(z_{j})\right) ^{2}}.
\end{equation*}%
}By writing out $\widehat{\mathrm{x}}_{j}(x+r)$ and $\widehat{\mathrm{x}}%
_{j}(x)$ according to their definitions and calculating their difference,
due to the fact that $W_{j}$ is compact and $f(\cdot )$ is bounded and Fr%
\'{e}chet differentiable, we can check that 
$\widehat{\mathrm{x}}_{j}(x+r)-\widehat{\mathrm{x}}_{j}(x)-\phi
_{j}(x)(r)=o(r)$. We define the partial differential $\frac{d\widehat{%
\mathrm{x}}_{j}(x)}{dx_{i}}(\cdot ):V_{i}\rightarrow V_{j}$ as%
\begin{equation*}
\frac{d\widehat{\mathrm{x}}_{j}(x)}{dx_{i}}(r_{i}):=\phi _{j}((0,\ldots
,r_{i},\ldots ,0)).
\end{equation*}%
Recall the definition of $\widetilde{\mu }$ (\ref{def-miu-hat}). From the
definition of $\phi _{j}(x)(\cdot )$ we can write that for $j\neq i$, 
\begin{eqnarray}
\frac{d\widehat{\mathrm{x}}_{j}(x)}{dx_{i}}(\cdot ) &=&\mathbb{E}_{%
\widetilde{\mu }}[X_{j}f_{i}(X)(\cdot )-\widehat{x}_{j}f_{i}(X)(\cdot )\mid
X_{k}=x_{k}\text{ for }k\neq j]  \notag \\
&=&\mathbb{E}_{\widetilde{\mu }}[(X_{j}-\widehat{x}_{j})(f_{i}(X)-f_{i}(X_{%
\theta _{j}}^{(j)}))(\cdot )\mid X_{k}=x_{k}\text{ for }k\neq j]  \notag \\
&&+\,\,\mathbb{E}_{\widetilde{\mu }}[(X_{j}-\widehat{x}_{j})f_{i}(X_{\theta
_{j}}^{(j)})(\cdot )\mid X_{k}=x_{k}\text{ for }k\neq j].  \label{upper_1}
\end{eqnarray}%
By the definition of $\widehat{x}_{j}$ we have that for any $r\in V$%
\begin{equation}
\mathbb{E}_{\widetilde{\mu }}[(X_{j}-\widehat{x}_{j})f_{i}(X_{\theta
_{j}}^{(j)})(r)\mid X_{k}=x_{k}\text{ for }k\neq j]=0.  \label{upper-bound-8}
\end{equation}%
Due to the fact that%
\begin{equation*}
\left\Vert (f_{i}(X)-f_{i}(X_{\theta _{j}}^{(j)}))(\cdot )\right\Vert
=\left\Vert \int_{0}^{1}f_{ij}(tX+(1-t)X_{\theta _{j}}^{(j)})(\cdot
,X_{j}-\theta _{j})dt\right\Vert \leq c_{ij}M,
\end{equation*}%
we have%
\begin{equation}
\left\Vert \mathbb{E}_{\widetilde{\mu }}[(X_{j}-\widehat{x}%
_{j})(f_{i}(X)-f_{i}(X_{\theta _{j}}^{(j)}))(\cdot )\mid X_{k}=x_{k}\text{
for }k\neq j]\right\Vert \leq c_{ij}M^{2}.  \label{upper-bound-9}
\end{equation}%
Combining (\ref{upper_1}), (\ref{upper-bound-8}) and (\ref{upper-bound-9}),
we see that for $j\neq i$%
\begin{equation}
\left\Vert \frac{d\widehat{\mathrm{x}}_{j}(x)}{dx_{i}}(\cdot )\right\Vert
\leq c_{ij}M^{2}.  \label{bound-derivative-x-hat}
\end{equation}%
Obviously $\frac{d\widehat{\mathrm{x}}_{i}(x)}{dx_{i}}(\cdot )\equiv 0$. For 
$t\in \lbrack 0,1]$ and $x\in W$, we define a linear functional $%
u_{i}(t,x)(\cdot ):V_{i}\rightarrow \mathbb{R}$ as%
\begin{equation}
u_{i}(t,x)(\cdot ):=f_{i}(tx+(1-t)\widehat{x})(\cdot ).  \label{def-u}
\end{equation}%
Then it is clear that%
\begin{equation}
h(x)=\int_{0}^{1}\sum_{i=1}^{n}u_{i}(t,x)(x_{i}-\widehat{x}_{i})dt.
\label{def-h-x}
\end{equation}%
Follow the same idea from \cite[(3.3)]{NLD} to the end of the proof of \cite[%
Lemma 3.1]{NLD}, we can verify that%
\begin{equation}
\left\vert \mathbb{E}_{\widetilde{\mu }}\left[ \left(
u_{i}(t,X)-u_{i}(t,X_{\theta _{i}}^{(i)})\right) (X_{i}-\widehat{X}%
_{i})h(X_{\theta _{i}}^{(i)})\right] \right\vert \leq 2aM\left(
tMc_{ii}+(1-t)M^{3}\sum_{j=1}^{n}c_{ij}^{2}\right) ,  \label{upper_4}
\end{equation}%
and%
\begin{equation}
\left\vert \mathbb{E}_{\widetilde{\mu }}\left[ u_{i}(t,X)(X_{i}-\widehat{X}%
_{i})\left( h(X)-h(X_{\theta _{i}}^{(i)})\right) \right] \right\vert \leq
b_{i}M\left( Mb_{i}+M^{3}\sum_{j=1}^{n}b_{j}c_{ij}\right) .  \label{upper_6}
\end{equation}%
Due to the fact that $\mathbb{E}_{\widetilde{\mu }}\left[ u_{i}(t,X_{\theta
_{i}}^{(i)})(X_{i}-\widehat{X}_{i})h(X_{\theta _{i}}^{(i)})\right] =0$, we
have the following decomposition%
\begin{eqnarray*}
&&\mathbb{E}_{\widetilde{\mu }}\left[ u_{i}(t,X)(X_{i}-\widehat{X}_{i})h(X)%
\right] \\
&=&\mathbb{E}_{\widetilde{\mu }}\left[ \left( u_{i}(t,X)-u_{i}(t,X_{\theta
_{i}}^{(i)})\right) (X_{i}-\widehat{X}_{i})h(X_{\theta _{i}}^{(i)})\right] +%
\mathbb{E}_{\widetilde{\mu }}\left[ u_{i}(t,X)(X_{i}-\widehat{X}_{i})\left(
h(X)-h(X_{\theta _{i}}^{(i)})\right) \right] .
\end{eqnarray*}%
Thus by (\ref{def-h-x}), (\ref{upper_4}) and (\ref{upper_6}), using the
above decomposition we have%
\begin{eqnarray*}
\mathbb{E}_{\widetilde{\mu }}\left[ h^{2}(X)\right] &=&\int_{0}^{1}%
\sum_{i=1}^{n}\mathbb{E}_{\widetilde{\mu }}\left[ u_{i}(t,X)(X_{i}-\widehat{X%
}_{i})h(X)\right] dt \\
&\leq &M^{2}\left( a\sum_{i=1}^{n}c_{ii}+\sum_{i=1}^{n}b_{i}^{2}\right)
+M^{4}\left(
a\sum_{i,j=1}^{n}c_{ij}^{2}+\sum_{i,j=1}^{n}b_{i}b_{j}c_{ij}\right) .
\end{eqnarray*}
\end{proof}

We provide the following proposition, which is also needed for bounding $%
\Delta _{1}$.

\begin{proposition}
\label{Prop-E-G-2}If we denote%
\begin{equation*}
G(x):=\sum_{i=1}^{n}f_{i}(x_{\theta _{i}}^{(i)})(x_{i}-\widehat{x}_{i}),
\end{equation*}%
then%
\begin{equation*}
\mathbb{E}_{\widetilde{\mu }}[G^{2}(X)]\leq
M^{2}\sum_{i=1}^{n}b_{i}^{2}+M^{3}\sum_{i,j=1}^{n}b_{i}(c_{ji}+b_{j}c_{ji}M).
\end{equation*}
\end{proposition}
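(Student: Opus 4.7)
The plan is to expand $G^{2}(X)=\sum_{i,j=1}^{n}\psi_{i}(X)\psi_{j}(X)$, where I write $\psi_{i}(X):=f_{i}(X_{\theta_{i}}^{(i)})(X_{i}-\widehat{X}_{i})$, and then bound the diagonal and the off-diagonal contributions under $\widetilde{\mu}$ separately. Since $\|f_{i}(X_{\theta_{i}}^{(i)})\|\leq b_{i}$ and $\|X_{i}-\widehat{X}_{i}\|_{V_{i}}\leq M$ by (\ref{intro-M}), I get $|\psi_{i}(X)|\leq b_{i}M$ pointwise, and summing $\mathbb{E}_{\widetilde{\mu}}[\psi_{i}^{2}(X)]\leq b_{i}^{2}M^{2}$ yields the diagonal term $M^{2}\sum_{i}b_{i}^{2}$ of the proposed bound. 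For the cross terms with $i\neq j$, the key observation is that $\psi_{i}(X)$ has conditional mean zero given $\{X_{k}:k\neq i\}$: both $f_{i}(X_{\theta_{i}}^{(i)})$ and $\widehat{X}_{i}=\widehat{\mathrm{x}}_{i}(X)$ are measurable with respect to $\sigma(X_{k}:k\neq i)$, while $\mathbb{E}_{\widetilde{\mu}}[X_{i}\mid X_{k},k\neq i]=\widehat{X}_{i}$ by (\ref{def-x-hat}), so linearity of $f_{i}(X_{\theta_{i}}^{(i)})$ gives $\mathbb{E}_{\widetilde{\mu}}[\psi_{i}(X)\mid X_{k},k\neq i]=0$.

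Because $\psi_{j}(X_{\theta_{i}}^{(i)})$ does not depend on $X_{i}$, this vanishing yields $\mathbb{E}_{\widetilde{\mu}}[\psi_{i}(X)\psi_{j}(X_{\theta_{i}}^{(i)})]=0$, hence
\[
\mathbb{E}_{\widetilde{\mu}}[\psi_{i}(X)\psi_{j}(X)]=\mathbb{E}_{\widetilde{\mu}}\!\left[\psi_{i}(X)\bigl(\psi_{j}(X)-\psi_{j}(X_{\theta_{i}}^{(i)})\bigr)\right].
\]
To bound $|\psi_{j}(X)-\psi_{j}(X_{\theta_{i}}^{(i)})|$, I add and subtract $f_{j}((X_{\theta_{i}}^{(i)})_{\theta_{j}}^{(j)})(X_{j}-\widehat{\mathrm{x}}_{j}(X))$ and split the difference into
\[
\bigl(f_{j}(X_{\theta_{j}}^{(j)})-f_{j}((X_{\theta_{i}}^{(i)})_{\theta_{j}}^{(j)})\bigr)(X_{j}-\widehat{\mathrm{x}}_{j}(X))\quad\text{and}\quad f_{j}((X_{\theta_{i}}^{(i)})_{\theta_{j}}^{(j)})(\widehat{\mathrm{x}}_{j}(X_{\theta_{i}}^{(i)})-\widehat{\mathrm{x}}_{j}(X)).
\]
Since $X_{\theta_{j}}^{(j)}$ and $(X_{\theta_{i}}^{(i)})_{\theta_{j}}^{(j)}$ differ only in the $i$-th coordinate by $X_{i}-\theta_{i}$, integrating $f_{ji}$ along that segment gives $\|f_{j}(X_{\theta_{j}}^{(j)})-f_{j}((X_{\theta_{i}}^{(i)})_{\theta_{j}}^{(j)})\|\leq c_{ji}M$, so the first piece is at most $c_{ji}M^{2}$. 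For the second piece, integrating the derivative bound (\ref{bound-derivative-x-hat}) along the segment in the $i$-th coordinate yields $\|\widehat{\mathrm{x}}_{j}(X)-\widehat{\mathrm{x}}_{j}(X_{\theta_{i}}^{(i)})\|_{V_{j}}\leq c_{ij}M^{3}$, and since $c_{ij}=c_{ji}$ by the symmetry of the second Fréchet derivative, this piece is at most $b_{j}c_{ji}M^{3}$. Combining, $|\psi_{j}(X)-\psi_{j}(X_{\theta_{i}}^{(i)})|\leq c_{ji}M^{2}(1+b_{j}M)$.

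Multiplying by $|\psi_{i}(X)|\leq b_{i}M$ and summing over all $i,j$ (including $i=j$, which only enlarges the bound) produces $M^{3}\sum_{i,j}b_{i}(c_{ji}+b_{j}c_{ji}M)$, and together with the diagonal estimate this yields the stated inequality. The main obstacle is really the already-established partial derivative bound (\ref{bound-derivative-x-hat}) for $\widehat{\mathrm{x}}_{j}$; once that Fréchet-differentiability analysis is in hand, the rest of the argument reduces to the conditional-expectation trick together with triangle inequalities and mean-value-type integrations in a single coordinate.
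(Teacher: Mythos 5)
Your proof is correct and follows essentially the same route as the paper: the paper computes the bound $\left\Vert \partial G(x)/\partial x_{i}\right\Vert \leq b_{i}+\sum_{j}(c_{ji}M+b_{j}c_{ji}M^{2})$ using (\ref{bound-derivative-x-hat}) and then invokes the argument of \cite[Lemma 3.2]{NLD}, which is exactly your combination of the conditional-mean-zero identity $\mathbb{E}_{\widetilde{\mu }}[\psi _{i}(X)\mid X_{k},k\neq i]=0$ with a coordinatewise mean-value bound on $\psi _{j}(X)-\psi _{j}(X_{\theta _{i}}^{(i)})$. Your expansion of $G^{2}$ into pairs rather than bounding $G(X)-G(X_{\theta _{i}}^{(i)})$ wholesale is only a cosmetic reorganization of the same estimate.
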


\begin{proof}
Taking derivative of $G$ and using (\ref{bound-derivative-x-hat}), we have%
\begin{eqnarray}
\left\Vert \frac{\partial G(x)}{\partial x_{i}}(\cdot )\right\Vert
&=&\left\Vert f_{i}(x_{\theta _{i}}^{(i)})(\cdot )+\sum_{j\neq i}^{n}\left(
f_{ji}(x_{\theta _{j}}^{(j)})(x_{j}-\widehat{x}_{j},\cdot )+f_{j}(x_{\theta
_{j}}^{(j)})(-\frac{\partial \widehat{\mathrm{x}}_{j}(x)}{\partial x_{i}}%
(\cdot ))\right) \right\Vert  \notag \\
&\leq &b_{i}+\sum_{j\neq i}^{n}\left( c_{ji}M+b_{j}c_{ji}M^{2}\right) \leq
b_{i}+\sum_{j}^{n}\left( c_{ji}M+b_{j}c_{ji}M^{2}\right) .  \label{upper-7}
\end{eqnarray}%
Following the same idea from \cite[(3.11)]{NLD} to the end of the proof of 
\cite[Lemma 3.2]{NLD}, we finish the proof.
\end{proof}

Next we combine the above two propositions. Denote%
\begin{eqnarray}
B_{1,1}&:= &\left( M^{2}\left(
a\sum_{i=1}^{n}c_{ii}+\sum_{i=1}^{n}b_{i}^{2}\right) +M^{4}\left(
a\sum_{i,j=1}^{n}c_{ij}^{2}+\sum_{i,j=1}^{n}b_{i}b_{j}c_{ij}\right) \right)
^{\frac{1}{2}},  \notag \\
B_{1,2}&:= &\left(
M^{2}\sum_{i=1}^{n}b_{i}^{2}+M^{3}\sum_{i,j=1}^{n}b_{i}(c_{ji}+b_{j}c_{ji}M)%
\right) ^{\frac{1}{2}}.  \label{def-B1B2}
\end{eqnarray}%
And let%
\begin{eqnarray*}
A_{1}&:= &\left\{ x\text{ }\in W\text{, }\left\vert f(x)-f(\widehat{x}%
)\right\vert \leq 2B_{1,1}\right\} , \\
A_{2}&:= &\{x\text{ }\in W\text{, }|\sum_{i=1}^{n}f_{i}(x_{\theta
_{i}}^{(i)})(x_{i}-\widehat{x}_{i})|\leq 2B_{1,2}\}.
\end{eqnarray*}%
Define $A:=A_{1}\cap A_{2}$. Then with Proposition \ref{Prop-diff-g} and
Proposition \ref{Prop-E-G-2} it is easy to see that $\mathbb{P}_{\widetilde{%
\mu }}(A)\geq \frac{1}{2}$. Therefore, with the fact that $%
2(B_{1,1}+B_{1,2})<B_{1}$ (defined in (\ref{def-B})), we have%
\begin{eqnarray}
\log \int_{W}e^{f(x)}d\mu (x) &\leq &\log \int_{A}e^{f(x)}d\mu (x)+\log 2 
\notag \\
&\leq &\log \int_{A}e^{f(\widehat{x})+\sum_{i=1}^{n}f_{i}(x_{\theta
_{i}}^{(i)})(x_{i}-\widehat{x}_{i})}d\mu (x)+B_{1}+\log 2.  \label{final_1}
\end{eqnarray}

\paragraph{\textbf{Bound for} $\Delta _{2}$}

For $\left\vert f(\widehat{x})-f(p^{x})\right\vert $, rewriting it as 
\begin{equation*}
f(\widehat{x})-f(p^{x})=\int_{0}^{1}\sum_{i=1}^{n}f_{i}(t\widehat{x}+(1-t)(%
\widehat{x}-p^{x}))(\widehat{x}_{i}-p_{i}^{x})dt,
\end{equation*}%
by (\ref{lowcomplexity_5}) and Cauchy inequality we have%
\begin{equation}
\left\vert f(\widehat{x})-f(p^{x})\right\vert \leq
\sum_{i=1}^{n}b_{i}\left\Vert \widehat{x}_{i}-p_{i}^{x}\right\Vert
_{V_{i}}\leq \left( \sum_{i=1}^{n}b_{i}^{2}\right) ^{\frac{1}{2}}\left( 
\sqrt{2}M^{3}\left( \sum_{i=1}^{n}c_{ii}^{2}\right) ^{\frac{1}{2}}+\sqrt{2}%
M^{2}n^{\frac{1}{2}}\epsilon \right) .  \label{lowcomplexity_6}
\end{equation}%
For $\left\vert \sum_{i=1}^{n}\left( f_{i}(x_{\theta
_{i}}^{(i)})-d_{i}^{x}\right) (\widehat{x}_{i}-x_{i})\right\vert $, using (%
\ref{covering_condition}) and Cauchy inequality we have%
\begin{equation*}
\left\vert \sum_{i=1}^{n}\left( f_{i}(x)-d_{i}^{x}\right) (\widehat{x}%
_{i}-x_{i})\right\vert \leq M\sqrt{n}\left( \sum_{i=1}^{n}\left\Vert
f_{i}(x)-d_{i}^{x}\right\Vert ^{2}\right) ^{\frac{1}{2}}\leq Mn\epsilon ,
\end{equation*}%
and thus by decomposing $f_{i}(x_{\theta _{i}}^{(i)})-d_{i}^{x}$ as $%
f_{i}(x_{\theta _{i}}^{(i)})-f_{i}(x)$ and $f_{i}(x)-d_{i}^{x}$ we get%
{\small 
\begin{eqnarray}
\left\vert \sum_{i=1}^{n}\left( f_{i}(x_{\theta
_{i}}^{(i)})-d_{i}^{x}\right) (\widehat{x}_{i}-x_{i})\right\vert &\leq
&\left\vert \sum_{i=1}^{n}\left( f_{i}(x_{\theta
_{i}}^{(i)})-f_{i}(x)\right) (\widehat{x}_{i}-x_{i})\right\vert +\left\vert
\sum_{i=1}^{n}\left( f_{i}(x)-d_{i}^{x}\right) (\widehat{x}%
_{i}-x_{i})\right\vert  \notag \\
&\leq &\sum_{i=1}^{n}M^{2}c_{ii}+Mn\epsilon .  \label{lowcomplexity_8}
\end{eqnarray}%
}For the last term $\left\vert \sum_{i=1}^{n}d_{i}^{x}(\widehat{x}%
_{i}-p_{i}^{x})\right\vert $, noting that $\sum_{i=1}^{n}\left\Vert
d_{i}^{x}\right\Vert ^{2}\leq 2\sum_{i=1}^{n}b_{i}^{2}+2\epsilon ^{2}n$, by (%
\ref{lowcomplexity_5}) and Cauchy inequality we have%
\begin{equation}
\left\vert \sum_{i=1}^{n}d_{i}^{x}(\widehat{x}_{i}-p_{i}^{x})\right\vert
\leq \left( 2\sum_{i=1}^{n}b_{i}^{2}+2\epsilon ^{2}n\right) ^{\frac{1}{2}%
}\left( \sqrt{2}M^{3}\left( \sum_{i=1}^{n}c_{ii}^{2}\right) ^{\frac{1}{2}}+%
\sqrt{2}M^{2}n^{\frac{1}{2}}\epsilon \right) .  \label{lowcomplexity_7}
\end{equation}%
Recalling the definition of $\Delta _{2}$, with (\ref{lowcomplexity_6}), (%
\ref{lowcomplexity_8}), (\ref{lowcomplexity_7}) and the definition of $B_{2}$
in (\ref{def-D}), it is clear that%
\begin{equation}
\Delta _{2}\leq B_{2}.  \label{bound-for-II}
\end{equation}

\subsubsection{\textbf{Proof of (\protect\ref{thm-1-upper-bound})}\label%
{Section-upper-bound-final}}

\begin{proof}
By the definition of $\Delta _{2}$ (\ref{def-II}) it is easy to verify that%
\begin{equation*}
\left\vert f(\widehat{x})-f(p^{x})+\sum_{i=1}^{n}f_{i}(x_{\theta
_{i}}^{(i)})(x_{i}-\widehat{x}_{i})-\sum_{i=1}^{n}d_{i}^{x}(x_{i}-p_{i}^{x})%
\right\vert \leq \text{$\Delta _{2}$}.
\end{equation*}%
Define%
\begin{equation*}
C(d):=\{x:x\in W,d^{x}=d\}.
\end{equation*}%
Using (\ref{final_1}) and (\ref{bound-for-II}) we have%
\begin{equation}
\log \int_{W}e^{f(x)}d\mu (x)\leq \log 2+B_{1}+B_{2}+\log \sum_{d\in
D(\epsilon )}\int_{x\in
C(d)}e^{f(p(d))+\sum_{i=1}^{n}d_{i}(x_{i}-p(d)_{i})}d\mu (x).
\label{final-2}
\end{equation}%
From (\ref{d_equation}) it is clear that%
\begin{equation*}
\int_{W}e^{\sum_{i=1}^{n}\lambda (p(d)_{i})+\sum_{i=1}^{n}d_{i}(x_{i})}d\mu
(x)=1.
\end{equation*}%
Combining the above equality and (\ref{kl_equation}), we get the following
bound%
\begin{equation}
\int_{x\in C(d)}e^{f(p(d))+\sum_{i=1}^{n}d_{i}(x_{i}-p(d)_{i})}d\mu (x)\leq
e^{f(p(d))-\sum_{i=1}^{n}d_{i}(p(d)_{i})-\sum_{i=1}^{n}\lambda
(p(d)_{i})}=e^{f(p(d))-\sum_{i=1}^{n}D(\nu _{i}^{p(d)}\parallel \mu _{i})}.
\label{eq-integral-1}
\end{equation}%
Plugging (\ref{eq-integral-1}) into (\ref{final-2}) and noting the fact that
for any $d\in D(\epsilon )$%
\begin{equation*}
f(p(d))-\sum_{i=1}^{n}D(\nu _{i}^{p(d)}\parallel \mu _{i})\leq \max_{\nu \ll
\mu ,\nu =\nu _{1}\times \nu _{2}\times \ldots \times \nu _{n}}\left\{
f(m(\nu ))-\sum_{i=1}^{n}D(\nu _{i}\parallel \mu _{i})\right\} ,
\end{equation*}%
we finish the proof of the upper bound.
\end{proof}

\section{Proofs of applications\label{section-proof-applications}}

In this section we give the proofs of our examples. 

\subsection{\textbf{Proof of Theorem} \protect\ref{Prop-ex-multi-color}\label%
{section-ex1-proof}}

In this subsection we prove Theorem \ref{Prop-ex-multi-color}. Throughout
the proof, $C$ will denote any positive constant that does not depend on $N$%
. Recall the definitions in Section \ref{ex-multi-color}, and write $n=%
\binom{N}{2}$ for the total number of edges in $G$. Write $\widetilde{T}(x)$
as the normalized version of $T(x)$, that is,%
\begin{equation*}
\widetilde{T}(x):=T(x)/N^{k-2}\text{.}
\end{equation*}%
For $u>1$, by the above definition we see that%
\begin{equation}
T(x)\geq u\mathbb{E[}T(X)]\text{ \ \ }\Longleftrightarrow \text{ \ \ }%
\widetilde{T}(x)\geq tn\text{, \ with }t=\frac{\mathbb{E[}T(X)]}{nN^{k-2}}u.
\label{def-T-tilt}
\end{equation}%
Thanks to the choice of $t$ we have $\psi _{l}(u)=\phi _{l}(t)$, where%
\begin{equation*}
\phi _{l}(t):=\inf \{\sum_{1\leq i<j\leq N}\sum_{s=1}^{l}x_{ijs}\log \frac{%
x_{ijs}}{1/l}:x_{ij}\in W_{0},\text{ }\widetilde{T}((x_{ij})_{1\leq i<j\leq
N})\geq tn\}.
\end{equation*}%
Similarly to the proof of \cite[Theorem 1.1]{NLD}, for $K,\delta >0$ to be
determined later we define%
\begin{equation*}
g(x):=nKh(((\widetilde{T}(x)/n)-t)/\delta ),
\end{equation*}%
where $h(x)=-1$ if $x<-1$, $h(x)=0$ if $x>0$, and for $x\in \lbrack -1,0]$ 
\begin{equation}
h(x)=10(x+1)^{3}-15(x+1)^{4}+6(x+1)^{5}-1.  \label{def-h}
\end{equation}%
By our choice of $h$ we can see that it is negative on $(-1,0)$, with
bounded first and second derivatives. Denote by $\mu _{ij}$ the measure of $%
X_{ij}$ for $1\leq i<j\leq N$, and $\mu $ the measure of $X$. Using the
definition of $g(\cdot )$ we further see that%
\begin{equation}
\mathbb{P}(\widetilde{T}(X)\geq tn)\leq \int_{W_{0}^{n}}e^{g(x)}d\mu (x)%
\text{.}  \label{eq-ex-multi-first-ineq}
\end{equation}%
%
%
%
%
%
%
For $s\in \lbrack l]$, let $e_{s}$ be the length $l$ vector with $s$th
coordinate $1$ and other coordinates $0$. Recalling that $\mu _{ij}$ is the
uniform distribution on $\{e_{s},s\in \lbrack l]\}$, we see that for any $%
y_{ij}\in W_{0}$, the only distribution with $\nu _{ij}\ll \mu _{ij}$ and $%
m(\nu _{ij})=y_{ij}$ is $\nu _{ij}(e_{s})=y_{ijs}$ for all $s\in \lbrack l]$%
. Therefore it is easy to see that%
\begin{eqnarray*}
&&\max_{\nu \ll \mu ,\nu =\nu _{1}\times \nu _{2}\times \ldots \times \nu
_{n}}\left\{ g(\left( m(\nu _{ij})\right) _{1\leq i<j\leq N})-\sum_{1\leq
i<j\leq N}D(\nu _{ij}\parallel \mu _{ij})\right\}  \\
&{\tiny =}&\max_{y_{ij}\in W_{0},1\leq i<j\leq N}\left\{ g(\left(
y_{ij}\right) _{1\leq i<j\leq N})-\sum_{1\leq i<j\leq
N}\sum_{s=1}^{l}y_{ijs}\log \frac{y_{ijs}}{1/l}\right\} \text{.}
\end{eqnarray*}%
Let $K=\phi _{l}(t)/n$. We claim that 
\begin{equation}
\max_{y_{ij}\in W_{0},1\leq i<j\leq N}\left\{ g(\left( y_{ij}\right) _{1\leq
i<j\leq N})-\sum_{1\leq i<j\leq N}\sum_{s=1}^{l}y_{ijs}\log \frac{y_{ijs}}{%
1/l}\right\} \leq -\phi _{l}(t-\delta )\text{.}  \label{eq-psi-l-delta}
\end{equation}%
This is because, for $y=\left( y_{ij}\right) _{1\leq i<j\leq N}$, if $%
\widetilde{T}(y)\geq tn$, we have $g(y)=0$, and thus 
\begin{equation*}
g(y)-\sum_{1\leq i<j\leq N}\sum_{s=1}^{l}y_{ijs}\log \frac{y_{ijs}}{1/l}%
=-\sum_{1\leq i<j\leq N}\sum_{s=1}^{l}y_{ijs}\log \frac{y_{ijs}}{1/l}\leq
-\phi _{l}(t)\leq -\phi _{l}(t-\delta ).
\end{equation*}%
If $\widetilde{T}(y)\leq (t-\delta )n$, we have $g(y)=-Kn$, and then 
\begin{equation*}
g(y)-\sum_{1\leq i<j\leq N}\sum_{s=1}^{l}y_{ijs}\log \frac{y_{ijs}}{1/l}\leq
-Kn=-\phi _{l}(t)\leq -\phi _{l}(t-\delta ).
\end{equation*}%
If $\widetilde{T}(y)=(t-\delta ^{\prime })n$ for some $\delta ^{\prime }\in
(0,\delta )$, we have 
\begin{equation*}
g(y)-\sum_{1\leq i<j\leq N}\sum_{s=1}^{l}y_{ijs}\log \frac{y_{ijs}}{1/l}\leq
-\sum_{1\leq i<j\leq N}\sum_{s=1}^{l}y_{ijs}\log \frac{y_{ijs}}{1/l}\leq
-\phi _{l}(t-\delta ^{\prime })\leq -\phi _{l}(t-\delta ).
\end{equation*}%
Observe that if we denote by $\mathcal{D}(\epsilon )$ a $\sqrt{n}\epsilon $%
-covering for the gradient of $\widetilde{T}(x)$ in the sense of (\ref%
{covering_condition}), then $D((\delta \epsilon )/(4K))$ is a $\sqrt{n}%
\epsilon $-covering for the gradient of $g(x)$. Applying Theorem \ref%
{main-theorem} for $g(\cdot )$, with (\ref{eq-ex-multi-first-ineq}) and (\ref%
{eq-psi-l-delta}) we get%
\begin{equation}
\log \mathbb{P}(\widetilde{T}(X)\geq tn)\leq -\phi _{l}(t-\delta
)+B_{1}+B_{2}+\log 2+\log \left\vert D((\delta \epsilon )/(4K))\right\vert .
\label{eq-mid-bound-ex-multicolor}
\end{equation}%
Next we analyze the right-hand side of (\ref{eq-mid-bound-ex-multicolor}).
First we bound $\phi _{l}(t)-\phi _{l}(t-\delta )$.

\subsubsection{\textbf{Upper bound of} $\protect\phi _{l}(t)-\protect\phi %
_{l}(t-\protect\delta )$\label{upper-bound-phi-diff-tech}}

Obviously $\phi _{l}(t)\geq \phi _{l}(t-\delta )$. If $\phi _{l}(t)=\phi
_{l}(t-\delta )$, then $0$ is an upper bound. Now we consider the case that $%
\phi _{l}(t)>\phi _{l}(t-\delta )$, and by the definition of $\phi _{l}$,
the only possibility is that $\phi _{l}(t-\delta )$ is achieved on some $%
x^{\ast }=(x_{ij}^{\ast })_{1\leq i<j\leq N}$ with $x_{ij}^{\ast }\in W_{0}$
and $\widetilde{T}(x^{\ast })\in \lbrack \left( t-\delta \right) n,tn)$.
Note that in addition we can assume $x_{ij1}^{\ast }\geq x_{ij2}^{\ast }\geq
\ldots \geq x_{ijl}^{\ast }$ for all $1\leq i<j\leq N$, since when $%
(\{x_{ij1}^{\ast },x_{ij2}^{\ast },\ldots ,x_{ijl}^{\ast }\})_{1\leq i<j\leq
N}$ is fixed, the choice $x_{ij1}^{\ast }\geq x_{ij2}^{\ast }\geq \ldots
\geq x_{ijl}^{\ast }$ achieves the maximum of $\widetilde{T}(\cdot )$ by the
rearrangement inequality. Thus if this decreasing relation is not satisfied,
we can choose another $x^{\prime }$ satisfying it with $\widetilde{T}%
(x^{\prime })>\widetilde{T}(x^{\ast })$, and $\phi _{l}(t-\delta )$ is
achieved on $x^{\prime }$ too, and we must have $\widetilde{T}(x^{\prime
})\in \lbrack \left( t-\delta \right) n,tn)$ otherwise $\phi _{l}(t)=\phi
_{l}(t-\delta )$.

For any $x=(x_{ij})_{1\leq i<j\leq N}$ with $x_{ij}\in W_{0}$, $\widetilde{T}%
(x)=\left( t-\delta ^{\prime }\right) n$ for some $\delta ^{\prime }\in
\lbrack 0,\delta ]$, and $x_{ij1}\geq x_{ij2}\geq \ldots \geq x_{ijl}$ for
any $1\leq i<j\leq N$, we consider $y=(y_{ij})_{1\leq i<j\leq N}$, where for
some $\gamma >0$ to be determined later,%
\begin{equation*}
y_{ij}:=(1-\gamma )x_{ij}+\gamma e_{1}\text{, \ }e_{1}=(1,0,\ldots ,0).
\end{equation*}%
By the definition of $y$ and $\widetilde{T}$ we have{\small 
\begin{eqnarray}
&&N^{k-2}(\widetilde{T}(y)-\widetilde{T}(x))  \notag \\
&=&\sum_{q_{1},\ldots ,q_{k}\in \left[ N\right] }\big(\prod\limits_{\{r,r^{%
\prime }\}\in E}(x_{q_{r}q_{r^{\prime }}1}+\gamma
\sum_{s=2}^{l}x_{q_{r}q_{r^{\prime }}s})+(1-\gamma
)^{m}\sum_{s=2}^{l}\prod\limits_{\{r,r^{\prime }\}\in E}x_{q_{r}q_{r^{\prime
}}s}-\sum_{s=1}^{l}\prod\limits_{\{r,r^{\prime }\}\in E}x_{q_{r}q_{r^{\prime
}}s}\big).  \label{eq-T-tilt-y-x}
\end{eqnarray}%
} Next we show that 
\begin{equation}
N^{k-2}(\widetilde{T}(y)-\widetilde{T}(x))\geq (m-1)\gamma
^{2}\sum_{q_{1},q_{2},\ldots ,q_{k}\in \left[ N\right] }\left(
\sum_{\{r,r^{\prime }\}\in E}\frac{1-x_{q_{r}q_{r^{\prime }}1}}{%
x_{q_{r}q_{r^{\prime }}1}}\prod\limits_{\{r,r^{\prime }\}\in
E}x_{q_{r}q_{r^{\prime }}1}\right) .  \label{eq-ex1-ad-1}
\end{equation}%
We fix $(q_{1},q_{2},\ldots ,q_{k})\in \left[ N\right] ^{k}$ for our
analysis. Denote by 
\begin{equation*}
I=\prod\limits_{\{r,r^{\prime }\}\in E}(x_{q_{r}q_{r^{\prime }}1}+\gamma
\sum_{s=2}^{l}x_{q_{r}q_{r^{\prime }}s})-(m-1)\gamma ^{2}\left(
\sum_{\{r,r^{\prime }\}\in E}\frac{\sum_{s\geq 2}x_{q_{r}q_{r^{\prime }}s}}{%
x_{q_{r}q_{r^{\prime }}1}}\prod\limits_{\{r,r^{\prime }\}\in
E}x_{q_{r}q_{r^{\prime }}1}\right) .
\end{equation*}%
For each $l^{\prime }\in \lbrack l]$, we let 
\begin{equation*}
\mathcal{M}_{l^{\prime }}:=\{\prod_{\{r,r^{\prime }\}\in
E}x_{q_{r}q_{r^{\prime }}s_{r,r^{\prime }}}:(s_{r,r^{\prime
}})_{\{r,r^{\prime }\}\in E}\in \lbrack l]^{m},max_{\{r,r^{\prime }\}\in E}{%
s_{r,r^{\prime }}}=l^{\prime }\}.
\end{equation*}%
By the decreasing assumption on $x$, we see that each term in $\mathcal{M}%
_{l^{\prime }}$ is greater than or equal to $\prod_{\{r,r^{\prime }\}\in
E}x_{q_{r}q_{r^{\prime }}l^{\prime }}$. By direct calculation, one can check
that, for $2\leq l^{\prime }\leq l$, in the expansion of $%
\prod_{\{r,r^{\prime }\}\in E}(x_{q_{r}q_{r^{\prime }}1}+\gamma
\sum_{s=2}^{l}x_{q_{r}q_{r^{\prime }}s})$, the summation of the coefficients
of those terms in $\mathcal{M}_{l^{\prime }}$ is $g_{0}(l^{\prime })$ where 
\begin{equation*}
g_{0}(l^{\prime }):=(1+(l^{\prime }-1)\gamma )^{m}-(1+(l^{\prime }-2)\gamma
)^{m}.
\end{equation*}%
Similarly, for $2\leq l^{\prime }\leq l$, as $\gamma <m^{-1}$, one can check
that in the expansion of $I$, all the coefficients of terms in $\mathcal{M}%
_{l^{\prime }}$ are positive, and the summation of them is $g_{0}(l^{\prime
})-m(m-1)\gamma ^{2}$. From the above analysis we have that 
\begin{equation}
I\geq \prod\limits_{\{r,r^{\prime }\}\in E}x_{q_{r}q_{r^{\prime
}}1}+\sum_{2\leq s\leq l}\left[ (g_{0}(s)-m(m-1)\gamma
^{2})\prod\limits_{\{r,r^{\prime }\}\in E}x_{q_{r}q_{r^{\prime }}s}\right] .
\label{eq-arrangementineq-1}
\end{equation}%
It is direct to check that $g_{0}(\cdot )$ is increasing on $\mathbb{Z}+$.
Note that we can rewrite 
\begin{equation}
(1-\gamma )^{m}\sum_{s=2}^{l}\prod\limits_{\{r,r^{\prime }\}\in
E}x_{q_{r}q_{r^{\prime }}s}-\sum_{s=1}^{l}\prod\limits_{\{r,r^{\prime }\}\in
E}x_{q_{r}q_{r^{\prime }}s}=-\prod\limits_{\{r,r^{\prime }\}\in
E}x_{q_{r}q_{r^{\prime
}}1}-g_{0}(1)\sum_{s=2}^{l}\prod\limits_{\{r,r^{\prime }\}\in
E}x_{q_{r}q_{r^{\prime }}s}.  \label{eq-arrangementineq-2}
\end{equation}%
Combining (\ref{eq-arrangementineq-1}) and (\ref{eq-arrangementineq-2}), and
using the monotonicity of $g_{0}(\cdot )$, we see that 
\begin{equation*}
I+(1-\gamma )^{m}\sum_{s=2}^{l}\prod\limits_{\{r,r^{\prime }\}\in
E}x_{q_{r}q_{r^{\prime }}s}-\sum_{s=1}^{l}\prod\limits_{\{r,r^{\prime }\}\in
E}x_{q_{r}q_{r^{\prime }}s}\geq (g_{0}(2)-m(m-1)\gamma
^{2}-g_{0}(1))\sum_{2\leq s\leq l}\prod\limits_{\{r,r^{\prime }\}\in
E}x_{q_{r}q_{r^{\prime }}s}\geq 0,
\end{equation*}%
where the last inequality is due to the fact that $g_{0}(2)-m(m-1)\gamma
^{2}-g_{0}(1)\geq 0$. Summing over all possible $(q_{1},q_{2},\ldots
,q_{k})\in \left[ N\right] ^{k}$ leads to (\ref{eq-ex1-ad-1}). \newline
For any $\lambda >1/l$, we denote by $N(\lambda )$ the number of
homomorphisms of $H$ in $G$ whose edges all satisfy $x_{ij1}>\lambda $. Note
that $x_{ij1}\geq 1/l$ always holds since $x_{ij1}\geq x_{ijs}$ for any $%
s\in \lbrack l]$. Denote by $C(N,H)$ the total number of different
homomorphisms of $H$ in a $N$ vertices complete graph. Then we have%
\begin{equation*}
N^{k-2}\widetilde{T}(x)\geq \sum_{q_{1},q_{2},\ldots ,q_{k}\in \left[ N%
\right] }\prod\limits_{\{r,r^{\prime }\}\in E}x_{q_{r}q_{r^{\prime }}1}\geq
N(\lambda )\lambda ^{m}+\left( C(N,H)-N(\lambda )\right) \frac{1}{l^{m}},
\end{equation*}%
which with the fact $\widetilde{T}(x)=\left( t-\delta ^{\prime }\right) n$
implies that%
\begin{equation}
N(\lambda )\leq (\left( t-\delta ^{\prime }\right)
N^{k}/2-C(N,H)/l^{m})/(\lambda ^{m}-1/l^{m}).  \label{eq-bound-N-lambda}
\end{equation}%
We denote by $\Gamma _{1}$ the set of homomorphisms of $H$ in $G$ who have
at least an edge with $x_{ij1}\leq \lambda $. Since $x_{ij1}\leq \lambda $
implies that $\left( 1-x_{ij1}\right) /x_{ij1}\geq \left( 1-\lambda \right)
/\lambda $, we have%
\begin{equation}
\sum_{q_{1},q_{2},\ldots ,q_{k}\in \left[ N\right] }\left(
\sum_{\{r,r^{\prime }\}\in E}\frac{1-x_{q_{r}q_{r^{\prime }}1}}{%
x_{q_{r}q_{r^{\prime }}1}}\prod\limits_{\{r,r^{\prime }\}\in
E}x_{q_{r}q_{r^{\prime }}1}\right) \geq \frac{1-\lambda }{\lambda }%
\sum_{(q_{1},q_{2},\ldots ,q_{k})\in \Gamma _{1}}\prod\limits_{\{r,r^{\prime
}\}\in E}x_{q_{r}q_{r^{\prime }}1},  \label{eq-H-counts-A}
\end{equation}%
where in the right-hand side we use $(q_{1},q_{2},\ldots ,q_{k})\in \Gamma
_{1}$ to represent those $(q_{1},q_{2},\ldots ,q_{k})$ with corresponding
homomorphism $H$ (that is, the homomorphism with vertices $%
(q_{1},q_{2},\ldots ,q_{k})$) in $\Gamma _{1}$. Note that 
\begin{equation*}
l\sum_{(q_{1},q_{2},\ldots ,q_{k})\in \Gamma
_{1}}\prod\limits_{\{r,r^{\prime }\}\in E}x_{q_{r}q_{r^{\prime }}1}\geq
\sum_{(q_{1},q_{2},\ldots ,q_{k})\in \Gamma
_{1}}\sum_{s=1}^{l}\prod\limits_{\{r,r^{\prime }\}\in E}x_{q_{r}q_{r^{\prime
}}s}\geq \widetilde{T}(x)N^{k-2}-N(\lambda ).
\end{equation*}%
Combining above inequality and (\ref{eq-bound-N-lambda}), we get{\small 
\begin{equation*}
\sum_{(q_{1},q_{2},\ldots ,q_{k})\in \Gamma _{1}}\prod\limits_{\{r,r^{\prime
}\}\in E}x_{q_{r}q_{r^{\prime }}1}\geq N^{k}\left[ (1-1/N)\left( t-\delta
^{\prime }\right) /2-(\left( t-\delta ^{\prime }\right)
/2-C(N,H)/(N^{k}l^{m}))/(\lambda ^{m}-1/l^{m})\right] /l.
\end{equation*}%
} Due to the fact that $C(N,H)/N^{k}$ converges to a positive constant as $%
N\rightarrow \infty $, and that $t$ is of order $1/l^{m-1}$ by (\ref%
{def-T-tilt}), we can choose $\lambda =1-c/l$ for some constant $c>0$ such
that%
\begin{equation}
\sum_{(q_{1},q_{2},\ldots ,q_{k})\in \Gamma _{1}}\prod\limits_{\{r,r^{\prime
}\}\in E}x_{q_{r}q_{r^{\prime }}1}\geq CN^{k}l^{-(m+1)}.  \label{eq-ex1-ad-2}
\end{equation}%
Combining (\ref{eq-ex1-ad-1}), (\ref{eq-H-counts-A}) and (\ref{eq-ex1-ad-2}%
), we see that $\widetilde{T}(y)-\widetilde{T}(x)\geq C\gamma
^{2}N^{2}l^{-(m+2)}$. Thus if we choose $\gamma =C_{0}\delta
^{1/2}l^{(m+2)/2}$ for a suitable $C_{0}>0$, we have $\widetilde{T}(y)-%
\widetilde{T}(x)\geq \delta n$ and thus $\widetilde{T}(y)\geq tn$. From the
convexity of $x\log x$, we have%
\begin{eqnarray}
\phi _{l}(t) &\leq &\sum_{1\leq i<j\leq N}\sum_{s=1}^{l}y_{ijs}\log \frac{%
y_{ijs}}{1/l}  \notag \\
&\leq &(1-\gamma )\sum_{1\leq i<j\leq N}\sum_{s=1}^{l}x_{ijs}\log \frac{%
x_{ijs}}{1/l}+\gamma n\log l\leq \phi _{l}(t-\delta )+C_{0}N^{2}\delta
^{1/2}l^{(m+2)/2}\log l,  \label{eq-ex-multi-bound-diff-phi}
\end{eqnarray}%
where in the last inequality we let $x=x^{\ast }$.

\subsubsection{{\textbf{Upper bound for} $\protect\phi _{l}(t) $}}

{\small Denote by $\left\lceil t^{1/k}N\right\rceil$ the smallest integer
greater than $t^{1/k}N$. Choose $r=C_{1}\left\lceil t^{1/k}N\right\rceil $,
and let $x=(x_{ij})_{1\leq i<j\leq N}$ where%
\begin{equation*}
x_{ij}=\left\{ 
\begin{array}{cc}
e_{1}, & \text{if }1\leq i<j\leq r, \\ 
(1/l,\ldots ,1/l), & \text{otherwise.}%
\end{array}%
\right.
\end{equation*}%
Then it is easy to check that for a suitable $C_{1}>0$ we have $\widetilde{T}%
(x)\geq tn$ for all $N$. Thus%
\begin{equation}
\phi _{l}(t)\leq \sum_{1\leq i<j\leq N}\sum_{s=1}^{l}x_{ijs}\log \frac{%
x_{ijs}}{1/l}\leq Ct^{2/k}N^{2}\log l.  \label{eq-ex1-ad3}
\end{equation}
}

\subsubsection{{\textbf{Final calculation}}}

We give the proofs of the upper bound and lower bound of Theorem \ref%
{Prop-ex-multi-color} separately below.

\begin{proof}[Proof of the upper bound in Theorem \protect\ref%
{Prop-ex-multi-color}]
Recalling that $K=\phi _{l}(t)/n$, with (\ref{eq-ex1-ad3}) and the fact that 
$t$ is of the order $l^{-(m-1)}$, we can see that%
\begin{equation*}
K\leq Cl^{-\frac{2(m-1)}{k}}\log l\text{.}
\end{equation*}%
We work with the $L_{1}$ norm in this problem. It is easy to verify that for 
$g(x)$ we have%
\begin{equation}
\left\vert g(x)\right\vert \leq nK,\,\,\,\,\left\Vert \frac{\partial g(x)}{%
\partial x_{ij}}\right\Vert \leq \frac{CKb_{ij}^{\prime }}{\delta },
\label{eq-ex-multi-bound-g-gd}
\end{equation}%
\begin{equation}
\left\Vert \frac{\partial ^{2}g(x)}{\partial x_{ij}\partial x_{kl}}%
\right\Vert \leq \frac{CKc_{ij,kl}^{\prime }}{\delta }+\frac{%
CKb_{ij}^{\prime }b_{kl}^{\prime }}{n\delta ^{2}},
\label{eq-ex-multi-bound-g-gd-2}
\end{equation}%
where%
\begin{equation}
b_{i}^{\prime }\leq C\text{, }c_{ij,kl}^{\prime }\leq \left\{ 
\begin{array}{cc}
CN^{-1}, & \text{ if }\left\vert \{i,j,k,l\}\right\vert =2\text{ or }3 \\ 
CN^{-2}, & \text{otherwise}%
\end{array}%
\right. .  \label{eq-ex-multi-bound-b-c}
\end{equation}%
Denoting the $\sqrt{n}\epsilon $-covering set in \cite[Theorem 1.2]{NLD} as $%
\widetilde{D}(\epsilon )$. Since we are working with $L_{1}$ norm and each $%
x_{ij}$ is $l$ dimensional, it is not hard to observe that for any $\epsilon
^{\prime }>0$, $D(\epsilon ^{\prime }):=\widetilde{D}(\epsilon ^{\prime }/%
\sqrt{l})\times \ldots \times \widetilde{D}(\epsilon ^{\prime }/\sqrt{l})$
(the product of $l$ sets) is a $\sqrt{n}\epsilon ^{\prime }$-covering of the
gradient of $\widetilde{T}(x)$ in the sense of (\ref{covering_condition}).
Therefore by \cite[Lemma 5.2]{NLD} we get%
\begin{equation}
\log \left\vert D((\delta \epsilon )/(4K))\right\vert \leq C\frac{l^{3}NK^{4}%
}{\delta ^{4}\epsilon ^{4}}\left( \log N\right) .
\label{eq-ex-multi-bound-cover}
\end{equation}%
Now we bound the right-hand side of (\ref{eq-mid-bound-ex-multicolor}). It
is clear that in this example $M\leq 2$. Using (\ref{eq-ex-multi-bound-g-gd}%
), (\ref{eq-ex-multi-bound-g-gd-2}) and (\ref{eq-ex-multi-bound-b-c}), by
some algebra it is easy to check that under the conditions that%
\begin{equation}
N\delta ^{2}>1,\,\,k/\delta >1,\,\,N\delta \epsilon /K>1,\,\,\epsilon <1,
\label{eq-ex-multi-condition-1}
\end{equation}%
we have%
\begin{equation}
B_{1}=CN^{\frac{3}{2}}K^{\frac{3}{2}}\delta ^{-1},\text{ }%
B_{2}=CN^{2}\epsilon K\delta ^{-1}.  \label{eq-ex-multi-bound-B-D}
\end{equation}%
Thus with (\ref{eq-mid-bound-ex-multicolor}), (\ref%
{eq-ex-multi-bound-diff-phi}), (\ref{eq-ex-multi-bound-cover}) and (\ref%
{eq-ex-multi-bound-B-D}), we see that%
\begin{eqnarray}
\log \mathbb{P}(\widetilde{T}(X)\geq tn) &\leq &-\phi _{l}(t)+\log
2+CN^{2}\delta ^{1/2}l^{(m+2)/2}\log l+CN^{\frac{3}{2}}K^{\frac{3}{2}}\delta
^{-1}  \notag \\
&&+\,\,CN^{2}\epsilon K\delta ^{-1}+C\frac{l^{3}NK^{4}}{\delta ^{4}\epsilon
^{4}}\left( \log N\right) .  \label{eq-ex-multi-mid2-bound}
\end{eqnarray}%
Denote by $T_{i}(X)$ the number of homomorphisms of $H$ in $G$ whose edges
are all of color $i$, and let $\widetilde{T}_{i}(X):=T_{i}(X)/N^{k-2}$. Then
obviously $T_{i}(X)$ has the same distribution as the number of
homomorphisms of $H$ in $G(N,l^{-1})$ - the Erd\H{o}s-R\'{e}nyi random graph
with probability $l^{-1}$ , and thus by \cite[Theorem 1.2 and Theorem 1.5]%
{SKR04} we have%
\begin{equation}
-\log \mathbb{P}(\widetilde{T}_{i}(X)\geq tl^{-1}n)\geq CN^{2}l^{-\Delta }.
\label{eq-ex-multi-lowerbound-T-i}
\end{equation}%
Due to the fact that $\mathbb{P}(\widetilde{T}(X)\geq tn)=\mathbb{P}%
(\sum_{i=1}^{l}\widetilde{T}_{i}(X)\geq tn)\leq \sum_{i=1}^{l}\mathbb{P}(%
\widetilde{T}_{i}(X)\geq tl^{-1}n)$, from (\ref{eq-ex-multi-lowerbound-T-i})
we further get%
\begin{equation}
-\log \mathbb{P}(\widetilde{T}(X)\geq tn)\geq -\log l\mathbb{P}(\widetilde{T}%
_{i}(X)\geq tl^{-1}n)\geq CN^{2}l^{-\Delta }-\log l.
\label{eq-ex-multi-lower-log-T}
\end{equation}%
Choosing $\epsilon =N^{-1/5}\delta ^{-3/5}K^{3/5}l^{3/5}(\log N)^{1/5}$, $%
\delta =N^{-(2\Delta +m+2)/(19+8m+21\Delta )}(\log N)^{-4}$, with (\ref%
{eq-ex-multi-lowerbound-T-i}) and (\ref{eq-ex-multi-lower-log-T}) it is
directly to derive that{\small 
\begin{eqnarray*}
\frac{\phi _{l}(t)}{-\log \mathbb{P}(\widetilde{T}\geq tn)} &\leq &1+CN^{-%
\frac{m/2+\Delta +1}{19+8m+21\Delta }}l^{\frac{m+2\Delta +2}{2}}(\log
l)(\log N)^{-2}+CN^{\frac{m+2\Delta +2}{19+8m+21\Delta }-\frac{1}{2}}l^{-%
\frac{3(m-1)-\Delta k}{k}}(\log l)^{\frac{3}{2}}(\log N)^{4} \\
&&+\,\,CN^{-\frac{1}{5}+\frac{8}{5}\frac{m+2\Delta +2}{19+8m+21\Delta }}l^{%
\frac{3}{5}-\frac{16(m-1)}{5k}+\Delta }(\log l)^{\frac{8}{5}}(\log N)^{\frac{%
33}{5}}+o(1).
\end{eqnarray*}%
} Using above equation and the fact that $(m-1)/k<\Delta /2$, we can check
that if $l\leq N^{1/(19+8m+21\Delta )}$, then the right-hand side goes to $0$
as $N\rightarrow \infty $, and it is directly to verify that condition (\ref%
{eq-ex-multi-condition-1}) holds. Recalling that $\psi _{l}(u)=\phi _{l}(t)$%
, we finish the proof.
\end{proof}

{\small \bigskip }

Next we show the lower bound.

\begin{proof}[Proof of the lower bound in Theorem \protect\ref%
{Prop-ex-multi-color}]
Fix any $z\in W_{0}^{n}$ such that $\widetilde{T}(z)\geq (t+\delta _{0})n$,
with $\delta _{0}$ to be determined later. Recall that $e_{s}$ is the $l$%
-dimension vector with $1$ on the $s$th coordinate and $0$ on others. Let $%
Z_{ij}$, $1\leq i<j\leq N$ be independent random vectors with $\mathbb{P}%
(Z_{ij}=e_{s})=z_{ijs},$ and denote by $\widehat{\mu }$ the measure of $%
Z=(Z_{ij})_{1\leq i<j\leq N}$. Let $\Gamma $ be the set of $x\in W_{0}^{n}$
such that $\widetilde{T}(x)\geq tn$, and let $\Gamma ^{\prime }$ be the
subset of $\Gamma $ where%
\begin{equation*}
\left\vert \sum_{1\leq i<j\leq N}\sum_{k=1}^{l}\left( x_{ijk}\log
z_{ijk}-x_{ijk}\log \frac{1}{l}-z_{ijk}\log \left( lz_{ijk}\right) \right)
\right\vert \leq \epsilon _{0}n.
\end{equation*}%
Then we have%
\begin{eqnarray}
\mathbb{P}(\widetilde{T}(X)\geq tn)=\int_{\Gamma }1d\mu (x) &\geq
&\int_{\Gamma ^{\prime }}e^{\sum_{1\leq i<j\leq N}\sum_{k=1}^{l}\left(
-x_{ijk}\log z_{ijk}+x_{ijk}\log z_{ijk}\right) }d\mu (x)  \notag \\
&\geq &e^{z_{ijk}\log \left( lz_{ijk}\right) -\epsilon _{0}n}\mathbb{P}_{%
\widehat{\mu }}(Z\in \Gamma ^{\prime }).  \label{Jun-add-319-t-1}
\end{eqnarray}%
Denote%
\begin{equation*}
H(x):=\sum_{1\leq i<j\leq N}\sum_{k=1}^{l}\left( x_{ijk}\log \left(
lz_{ijk}\right) -z_{ijk}\log \left( lz_{ijk}\right) \right) .
\end{equation*}%
Obviously $\mathbb{E}_{\widehat{\mu }}\left[ H(Z)\right] =0$. By direct
calculation we have%
\begin{equation}
\text{Var}_{\widehat{\mu }}(H(Z))=\sum_{1\leq i<j\leq N}\left[
\sum_{k=1}^{l}z_{ijk}\log \left( z_{ijk}\right) -\left(
\sum_{k=1}^{l}z_{ijk}\log \left( z_{ijk}\right) \right) ^{2}\right] .
\label{Var-miu-H}
\end{equation}%
Noting that%
\begin{equation*}
\log \left( \frac{1}{l}\right) \leq \sum_{k=1}^{l}z_{ijk}\log \left(
z_{ijk}\right) \leq 0,
\end{equation*}%
with (\ref{Var-miu-H}) it is clear that%
\begin{equation*}
\text{Var}_{\widehat{\mu }}(H(Z))\leq Cn\left( \log l\right) ^{2}.
\end{equation*}%
Thus by choosing $\epsilon _{0}=C_{2}N^{-1}\left( \log l\right) $ for a
suitable $C_{2}>0$ we have%
\begin{equation}
\mathbb{P}_{\widehat{\mu }}(\left\vert H(Z)\right\vert >\epsilon _{0}n)\leq 
\frac{C\left( \log l\right) ^{2}}{\epsilon _{0}^{2}n}=\frac{1}{4}.
\label{eq-ex1-ad-4}
\end{equation}%
Let $S(x):=\widetilde{T}(x)-\widetilde{T}(z)$. Using the similar approach as
in \cite[(4.3) - (4.4)]{NLD} we can verify that%
\begin{equation*}
\mathbb{E}_{\widehat{\mu }}\left[ S^{2}\right] \leq CN^{2}\text{.}
\end{equation*}%
Thus by choosing $\delta _{0}=C_{3}N^{-1}$ for a suitable $C_{3}>0$, we get%
\begin{equation}
\mathbb{P}_{\widehat{\mu }}(\widetilde{T}(Z)\leq tn)\leq \frac{CN^{2}}{%
\delta _{0}^{2}n^{2}}=\frac{1}{4}.  \label{eq-ex1-ad-5}
\end{equation}%
Using (\ref{eq-ex1-ad-4}) and (\ref{eq-ex1-ad-5}) we see that $\mathbb{P}_{%
\widehat{\mu }}(Z\in \Gamma ^{\prime })\geq 1/2$, therefore with (\ref%
{Jun-add-319-t-1}) and by taking the sup over $z$ we get%
\begin{eqnarray*}
\log \mathbb{P}(\widetilde{T}(X)\geq tn) &\geq &-\phi _{l}(t+\delta
_{0})-\epsilon _{0}n-\log 2 \\
&\geq &-\phi _{l}(t)-CN^{2}\delta _{0}^{1/2}l^{(m+2)/2}\log l-CN\left( \log
l\right) -\log 2.
\end{eqnarray*}%
Consequently with (\ref{eq-ex-multi-lower-log-T}) we see that%
\begin{equation*}
\frac{-\phi _{l}(t)}{-\log \mathbb{P}(\widetilde{T}(X)\geq tn)}\geq 1-CN^{-%
\frac{1}{2}}l^{\Delta +\frac{m+2}{2}}\log N+o(1),
\end{equation*}%
which completes the proof.
\end{proof}

\subsection{{\textbf{Proof of Theorem \protect\ref{Prop-ex-continuous}}\label%
{section-ex2-proof}}}

In this subsection we show Theorem \ref{Prop-ex-continuous} in our second
example about continuous weighted triangle counts. Throughout the proof, $C$
will denote any positive constant that does not depend on $N$. We follow the
routine of the above example. In the proof we use the definitions in Section %
\ref{ex-continuous}. Define the normalized weighted triangle counts $%
\widetilde{T}(x)$ as%
\begin{equation*}
\widetilde{T}(x):=T(x)/N.
\end{equation*}%
For any $1<u<8$, we let $t=u(N-2)/(24N)$. Since $n=N(N-1)/2$ and by
calculation $\mathbb{E}[T(X)]=N(N-1)(N-2)/48$, we see that $\{T(x)\geq u$$%
\mathbb{E}\mathbb{[}T(X)]\}=$ $\{\widetilde{T}(x)\geq tn\}$. Define%
\begin{equation*}
\phi _{n}(t):=\inf \{\sum_{1 \leq i<j \leq N}(-1+\frac{\lambda
_{y_{ij}}e^{-\lambda _{y_{ij}}}}{1-e^{-\lambda _{y_{ij}}}}+\log (\frac{%
\lambda _{y_{ij}}}{1-e^{-\lambda _{y_{ij}}}})):y_{ij}\in (0,1)\text{ such
that }\widetilde{T}(y)\geq tn\}.
\end{equation*}%
Obviously $\phi _{n}(t)=\psi _{n}(u)$. Let $g(x)=nKh(((\widetilde{T}%
(x)/n)-t)/\delta )$ for $h(\cdot)$ defined in (\ref{def-h}), with $K=$ $\phi
_{n}(t)/n$ and $\delta $ to be determined later. Then same as the argument
of showing (\ref{eq-psi-l-delta}), we have%
\begin{equation*}
\max_{y=(y_{ij})_{1\leq i<j\leq N}\text{, }y_{ij}\in (0,1)}\left\{
g(y)-\sum_{i<j}D(\nu ^{y_{ij}}\parallel \mu _{ij})\right\} \leq -\phi
_{n}(t-\delta ).
\end{equation*}%
Applying Theorem \ref{main-theorem} for $g(x)$ and some $\epsilon$ to be
determined later, we get%
\begin{equation}
\log \mathbb{P}(\widetilde{T}(X)\geq tn)\leq \log \mathbb{E[}e^{g(x)}]\leq
-\phi _{n}(t-\delta )+\log 2+B_{1}+B_{2}+\log \left\vert \mathcal{D}%
(\epsilon )\right\vert,  \label{eq-ex-continuous-ad-1}
\end{equation}%
where $B_{1},B_{2}$ are as defined in Theorem \ref{main-theorem}, and $%
\mathcal{D}(\epsilon )$ will be constructed later. Next we upper bound the
rightmost side of (\ref{eq-ex-continuous-ad-1}).

\subsubsection{{\textbf{The upper bound for} $\protect\phi _{n}(t)-\protect%
\phi _{n}(t-\protect\delta )$}}

Recall the definition of $\nu ^{a}$ in Section \ref{ex-continuous}. For $%
\lambda ^{a}>0$ we define $f_{1}(\lambda ^{a}):=$$\mathbb{E}_{\nu ^{a}}[X]$.
After calculation we have%
\begin{equation*}
f_{1}(x)=\frac{1}{x}-\frac{1}{e^{x}-1},
\end{equation*}%
and we can check that on any bounded interval $[-M_{0},M_{0}]$, there exists 
$c_{M_{0}}>0$ such that%
\begin{equation}
f_{1}^{\prime }(x)<-c_{M_{0}}.
\label{condition-ex-continuous-derivative-mean}
\end{equation}%
For $\lambda ^{a}>0$ we define $f_{2}(\lambda ^{a}):=D(\nu ^{a}||U)$, which
after some calculation is 
\begin{equation*}
f_{2}(x)=-1+\frac{xe^{-x}}{1-e^{-x}}+\log (\frac{x}{1-e^{-x}}).
\end{equation*}%
We can check that%
\begin{equation}
f_{2}^{\prime }(x)<0\text{ when }x<0\text{; }f_{2}^{\prime }(x)>0\text{ when 
}x>0\text{; }\left\vert f_{2}^{\prime }(x)\right\vert \leq C_{D}\text{ for
some }C_{D}<\infty .  \label{condition-ex-continuous-derivative-KL}
\end{equation}%
We assume that $t-\delta >1/24$, since later we will choose $\delta
\rightarrow 0$ as $N\rightarrow 0$, and by our choice $t>1/24$ as $%
N\rightarrow 0$. In order to bound $\phi _{n}(t)-\phi _{n}(t-\delta )$, we
use the same strategy as Section \ref{upper-bound-phi-diff-tech}. If $\phi
_{n}(t)\neq \phi _{n}(t-\delta )$, we assume that $\phi _{n}(t-\delta )$ is
achieved on some $z=(z_{ij})_{1\leq i<j\leq N}$ such that $\widetilde{T}%
(z)=\left( t-\delta ^{\prime }\right) n$ for some $\delta ^{\prime }\in
\lbrack 0,\delta ]$. In addition we assume that $z_{ij}\geq 1/2$ for all $i<j
$, since otherwise according to (\ref{condition-ex-continuous-derivative-KL}%
) we can change those $z_{ij}<1/2$ to $1/2$ without increasing $%
\sum_{i<j}D(\nu ^{z_{ij}}||U)$, which results in a bigger $\widetilde{T}(z)$%
, and we can consider the new $z$ instead. For some $s\in (1/2,1)$ to be
determined later, we define $A(s):=\{\{i,j\}:z_{ij}\geq s\}$ and $%
V_{s}(i):=|\{k\in \lbrack N]:z_{ik}\geq s\}|$ (here $|\cdot |$ refers to
cardinality). Write $B(s)$ as the set of triangles whose three edges all
belong to $A(s)$. Observing that for each edge $\{i,j\}\in A(s)$, the number
of triangles in $B(s)$ containing $\{i,j\}$ is at least $%
V_{s}(i)+V_{s}(j)-N-1$, we get that%
\begin{eqnarray}
\left\vert B(s)\right\vert  &\geq &\frac{1}{3}\sum_{\{i,j\}\in
A(s)}(V_{s}(i)+V_{s}(j)-N-1)  \notag \\
&=&\frac{1}{3}\left( \sum_{i=1}^{N}\left( V_{s}(i)\right) ^{2}-\left\vert
A(s)\right\vert (N-1)\right) \geq \frac{1}{3}\left( \frac{4\left\vert
A(s)\right\vert ^{2}}{N}-\left\vert A(s)\right\vert (N-1)\right) ,
\label{eq-ex-cont-Bs-lower}
\end{eqnarray}%
where the second equality is by the fact that each $V_{s}(i)$ appears $%
V_{s}(i)$ times in the summation, and the last inequality is by Cauchy
inequality and the fact that $\sum_{i=1}^{N}\left\vert V_{s}(i)\right\vert
=2\left\vert A(s)\right\vert $. Since%
\begin{equation}
\binom{N}{3}\geq T(z)\geq \left\vert B(s)\right\vert (s^{3}-1/8)+\mathbb{E[}%
T(X)],  \label{eq-ex-cont-Bs-lower-2}
\end{equation}%
with the fact that $\mathbb{E}\mathbb{[}T(X)]=N^{3}/48+o(N^{2})$,
substituting (\ref{eq-ex-cont-Bs-lower}) into (\ref{eq-ex-cont-Bs-lower-2})
we can verify that there exist $s\in (0,1)$ and $c_{s}>0$ independent of $N$
such that $\left\vert A(s)\right\vert \leq (1-c_{s})n$. We find $c_{s}n$
number of edges in $A(s)^{C}$, and increase the weights on them by $\sigma >0
$ to be determined later, getting a new weight vector $\widetilde{z}=(%
\widetilde{z}_{ij})_{1\leq i<j\leq N}$. Later we can verify that $\sigma
\rightarrow 0$ as $N\rightarrow \infty $, and thus the weight-increasing
operation is feasible, that is, $\widetilde{z}_{ij}\leq 1$ for all $\{i,j\}$%
, as $N$ is large enough. Since for each edge there are $N-2$ triangles
containing it, after the operation, with the fact that $z_{ij}>1/2$ for all $%
\{i,j\}$, each edge in $A(s)^{C}$ at least contribute $\sigma /5$ more to $%
\widetilde{T}(z)$. Therefore we get%
\begin{equation*}
\widetilde{T}(\widetilde{z})-\widetilde{T}(z)\geq \frac{c_{s}n\sigma }{5},
\end{equation*}%
which implies that we can choose $\sigma =c_{s}^{\prime }\delta ^{\prime }$
for some $c_{s}^{\prime }>0$ to make $\widetilde{T}(\widetilde{z})\geq tn$.
Since for $N$ large enough we can find $s_{1}<1$ such that $s+\sigma <s_{1}$%
, with (\ref{condition-ex-continuous-derivative-mean}), we see that for
those $z_{ij}\in A(s)^{C}$, we have $\left\vert \lambda ^{z_{ij}}-\lambda ^{%
\widetilde{z}_{ij}}\right\vert \leq c_{s_{1}}\sigma $ for some $c_{s_{1}}>0$%
, and thus with (\ref{condition-ex-continuous-derivative-KL}) we have $D(\nu
^{\widetilde{z}_{ij}}||U)-D(\nu ^{z_{ij}}||U)\leq C_{D}c_{s_{1}}\sigma $.
Therefore we have that%
\begin{equation}
\phi _{n}(t)-\phi _{n}(t-\delta )\leq \sum_{i<j}D(\nu ^{\widetilde{z}%
_{ij}}\parallel \mu _{ij})-\sum_{i<j}D(\nu ^{z_{ij}}\parallel \mu _{ij})\leq
C_{D}c_{s_{1}}\sigma n\leq C_{D}c_{s_{1}}c_{s}^{\prime }\delta ^{\prime
}n\leq CN^{2}\delta .  \label{upper-bound-difference-phi}
\end{equation}%
%
%
%
%

\subsubsection{{\textbf{Bound for} $K$}}

In order to bound $K$, we just need to bound $\phi _{n}(t)$. Obviously we
can choose $z_{ij}=s_{t}$ for some $s_{t}\in (0,1)$ such that $\widetilde{T}%
(z)\geq tn$ for all $n$, and thus $\phi _{n}(t)\leq CN^{2}$, which implies
that $K\leq C$ since $K=\phi _{n}(t)/n$.

\subsubsection{{\textbf{Final calculation}}}

We give the proofs of the upper bound and lower bound of Theorem \ref%
{Prop-ex-continuous} separately below.

\begin{proof}[Proof of the upper bound in Theorem \protect\ref%
{Prop-ex-continuous}]
From our choice of $g$, it is easy to verify that%
\begin{equation}
B_{1}=CN^{3/2}\delta ^{-1}+CN\delta ^{-2}\text{, }\,\,B_{2}=CN\delta
^{-2}+N^{2}\delta ^{-1}\epsilon .  \label{eq-ex-continuous-B-D}
\end{equation}%
One can check that in the sense of (\ref{covering_condition}), the $\sqrt{n}%
\delta \epsilon /(4K)$-covering of the gradient of $\widetilde{T}(x)$ is a $%
\sqrt{n}\epsilon $-covering of the gradient of $g(x)$, by \cite[Lemma 5.2]%
{NLD} and the fact that $K$ is bounded by a constant, we have that for $g(x)$%
, $\log \left\vert D(\epsilon )\right\vert \leq CN\delta ^{-4}\epsilon
^{-4}\log N$. Choosing $\epsilon =N^{-1/5}\delta ^{2/5}$, by (\ref%
{eq-ex-continuous-ad-1}), (\ref{upper-bound-difference-phi}) and (\ref%
{eq-ex-continuous-B-D}) we get%
\begin{equation}
\log \mathbb{P}(\widetilde{T}(X)\geq tn)\leq -\phi _{n}(t)+CN^{2}\delta +C%
\frac{N^{\frac{3}{2}}}{\delta }+C\frac{N}{\delta ^{2}}+CN^{\frac{9}{5}%
}\delta ^{-\frac{8}{5}}\left( \log N\right) ^{\frac{1}{5}}\text{.}
\label{eq-ex-cont-upper-final}
\end{equation}%
For any $s^{\ast }\in (0,1)$, based on the graph $G$ and weight $X$, we
construct a graph $G_{s^{\ast }}^{\prime }(X)$ by making those edges with
weight $>s^{\ast }$ as connected and other edges as disconnected. Write $%
T_{s^{\ast }}(X)$ as the number of triangles in $G_{s^{\ast }}^{\prime }(X)$%
. Then it is not hard to see that we can choose $0<s_{u}<1$ and $1<u^{\prime
}<8$ such that%
\begin{equation*}
\{T(X)\geq u\mathbb{E}\left[ T(X)\right] \}\subset \{T_{s_{u}}(X)\geq
u^{\prime }\mathbb{E}\left[ T_{s_{u}}(X)\right] \}.
\end{equation*}%
Since $G_{s_{u}}^{\prime }(X)$ is just the Erd\H{o}s-R\'{e}nyi random graph $%
G(N,1-s_{u})$, with \cite[Theorem 1.2 and Theorem 1.5]{SKR04} we see that%
\begin{equation}
-\log \mathbb{P}(\widetilde{T}(X)\geq tn)\geq -\log \mathbb{P}%
(T_{s_{u}}(X)\geq u^{\prime }\mathbb{E}\left[ T_{s_{u}}(X)\right] )\geq
CN^{2}.  \label{eq-ex-continuous-ad-4}
\end{equation}%
Choosing $\delta =N^{-1/10}$ and dividing both sides of (\ref%
{eq-ex-cont-upper-final}) by $-\log \mathbb{P}(\widetilde{T}(X)\geq tn)$, we
get the desired upper bound.
\end{proof}

\begin{proof}[Proof of the lower bound in Theorem \protect\ref%
{Prop-ex-continuous}]
Fix any $z=(z_{ij})_{1\leq i<j\leq N}$ with $z_{ij}\in (0,1)$ and $%
\widetilde{T}(z)\geq (t+\delta _{0})n$ with $\delta _{0}$ to be determined
later. Consider $Z=(Z_{ij})_{1\leq i<j\leq N}$ with $Z_{ij}$ ($i<j$)
independently from $\nu ^{z_{ij}}$. Denote by $\widehat{\mu }_{z}$ the
distribution of $Z$. Denote%
\begin{equation*}
\Gamma :=\{x=(x_{ij})_{1\leq i<j\leq N}:x_{ij}\in (0,1),\widetilde{T}(x)\geq
tn\},
\end{equation*}%
and%
\begin{equation*}
\Gamma ^{\prime }:=\Gamma \cap \{x=(x_{ij})_{1\leq i<j\leq N}:\
|\sum_{i<j}(-\lambda _{z_{ij}}x_{ij}-(-1+\frac{\lambda _{z_{ij}}e^{-\lambda
_{z_{ij}}}}{1-e^{-\lambda _{z_{ij}}}}))|<\epsilon _{0}n\},
\end{equation*}%
for $\epsilon _{0}$ to be determined later. Noting that $\mathbb{P}(%
\widetilde{T}(X)\geq tn)=$ $\mathbb{E}\left[ 1_{\Gamma }\right] $ and $%
\Gamma ^{\prime }\subset \Gamma $, we have%
\begin{eqnarray}
\mathbb{P}(\widetilde{T}(X)\geq tn) &\geq &\mathbb{E[}1_{\Gamma ^{\prime
}}e^{\sum_{i<j}(-\lambda _{z_{ij}}x_{ij}+\log (\frac{\lambda _{z_{ij}}}{%
1-e^{-\lambda _{z_{ij}}}}))-\sum_{i<j}(-\lambda _{z_{ij}}x_{ij}+\log (\frac{%
\lambda _{z_{ij}}}{1-e^{-\lambda _{z_{ij}}}}))}]  \notag \\
&\geq &e^{-\sum_{i<j}D(\nu ^{z_{ij}}||U)-\epsilon _{0}n}\mathbb{P}_{\widehat{%
\mu }_{z}}(Z\in \Gamma ^{\prime }).  \label{eq-ex-continuous-ad-3}
\end{eqnarray}%
By direct integration, we can see that for some $C_{4}>0$ 
\begin{equation*}
\mathbb{E}_{\widehat{\mu }_{z}}[(\sum_{i<j}(-\lambda _{z_{ij}}Z_{ij}-(-1+%
\frac{\lambda _{z_{ij}}e^{-\lambda _{z_{ij}}}}{1-e^{-\lambda _{z_{ij}}}}%
)))^{2}]\leq C_{4}n.
\end{equation*}%
Thus by choosing $\epsilon _{0}=(4C_{4}/n)^{1/2}$ and using the Markov's
inequality, we get%
\begin{equation*}
\mathbb{P}_{\widehat{\mu }_{z}}(\sum_{i<j}(-\lambda _{z_{ij}}Z_{ij}-(-1+%
\frac{\lambda _{z_{ij}}e^{-\lambda _{z_{ij}}}}{1-e^{-\lambda _{z_{ij}}}}%
))\geq \epsilon _{0}n)\leq \frac{C_{4}}{\epsilon _{0}^{2}n}=\frac{1}{4}.
\end{equation*}%
Using the similar method as in \cite[(4.4)]{NLD}, by choosing $\delta
_{0}=CN^{-1}$, we have that $\mathbb{P}_{\widehat{\mu }_{z}}(\widetilde{T}%
(Z)\leq tn)\leq 1/4$. Thus $\mathbb{P}_{\widehat{\mu }_{z}}(Z\in \Gamma
^{\prime })\geq 1/2$, and with (\ref{eq-ex-continuous-ad-3}) by taking sup
over $z$ we get%
\begin{equation*}
\log \mathbb{P}(\widetilde{T}(X)\geq tn)\geq -\phi _{n}(t+\delta
_{0})-\epsilon _{0}n-\log 2.
\end{equation*}%
Combining above inequality and (\ref{upper-bound-difference-phi}) we get%
\begin{equation*}
\log \mathbb{P}(\widetilde{T}(X)\geq tn)\geq -\phi _{n}(t)+CN-\log 2,
\end{equation*}%
which implies the lower bound with (\ref{eq-ex-continuous-ad-4}).
\end{proof}

\subsection{{\textbf{Proof of Theorem \protect\ref{theorem-as-new}}\label%
{section-new-ex-proof}}}

In this section we show Theorem \ref{theorem-as-new}, which is an extension
of \cite{AS}. Throughout the proof, $C$ will denote any positive constant
that does not depend on $n$. Note that in this example $N$ is the dimension
of $W_{1}$, and it has no relation with $n$. Recall the definitions in
Section \ref{ex-3}. For convenience we define%
\begin{equation*}
f(x):=H_{n}^{J,h}(x)=\frac{1}{2}\sum_{i,j=1}^{n}A_{n}(i,j)x_{i}^{T}Jx_{j}+%
\sum_{i=1}^{n}x_{i}^{T}h,
\end{equation*}%
and%
\begin{equation*}
\widetilde{f}(x):=\frac{1}{2}\sum_{i,j=1}^{n}A_{n}(i,j)x_{i}^{T}Jx_{j}.
\end{equation*}%
Without the loss of generality we assume that $\mu _{i}$'s are supported on
the unit ball $B_{\mathbb{R}^{N}}(1)$ in $\mathbb{R}^{N}$. Using the similar
argument to \cite[Lemma 3.1]{AS}, we can further assume that%
\begin{equation}
\max_{i,j}\left\vert A_{n}(i,j)\right\vert =o(1)\text{ \ \ \ and \ \ }%
A_{n}(i,i)=0\text{ for all }i.  \label{eq-new-extra-condition}
\end{equation}%
We work with the $L_{1}$ norm, and note that%
\begin{equation*}
\sup_{x\in B_{\mathbb{R}^{N}}(1)}\left\Vert x\right\Vert _{L_{1}}=\sqrt{N}.
\end{equation*}%
By the definition of $f$ and (\ref{ex-new-condition}), it is direct to
verify that%
\begin{equation}
a=O(n)\text{, \ \ \ }b_{i}=O(\sum_{j}\left\vert A_{n}(i,j)\right\vert )+O(1)%
\text{, \ \ \ }c_{ij}=O(\left\vert A_{n}(i,j)\right\vert ).
\label{eq-new-ex-a-b-c}
\end{equation}%
Using (\ref{eq-new-ex-a-b-c}) it is straightforward to verify that the lower
bound part is implied by Theorem \ref{main-theorem}, that is,%
\begin{equation*}
\lim_{n\rightarrow \infty }\frac{1}{n}\left[ \log
\int_{W_{1}^{n}}e^{H_{n}^{J,h}(x)}d\mu (x)-\max_{\nu \ll \mu ,\nu =\nu
_{1}\times \nu _{2}\times \ldots \times \nu _{n},}\left\{ H_{n}^{J,h}(m(\nu
))-\sum_{i=1}^{n}D(\nu _{i}\parallel \mu _{i})\right\} \right] \geq 0.
\end{equation*}%
Next we consider the upper bound part. If we calculate $B_{1}$ and $B_{2}$
in Theorem \ref{main-theorem}, then they are of the wrong order. In order to
show Theorem \ref{theorem-as-new}, we need to incorporate the special
property of $f$ into the proof of Theorem \ref{main-theorem}. For $f$ and $%
\widetilde{f}$ we have that%
\begin{equation*}
f_{i}(x)(z)=\sum_{j\neq i}A_{n}(j,i)x_{j}^{T}Jz+h^{T}z\text{, \ \ \ }%
\widetilde{f}_{i}(x)(z)=\sum_{j\neq i}A_{n}(j,i)x_{j}^{T}Jz.
\end{equation*}%
Defining $\widetilde{\mu }$ same as (\ref{def-miu-hat}), we claim that%
\begin{equation}
\mathbb{E}_{\widetilde{\mu }}\left[ \left( \widetilde{f}(X)-\widetilde{f}(%
\widehat{X})\right) ^{2}\right] =o(n^{2}),  \label{eq-new-ex-dif-f-tilt}
\end{equation}%
and%
\begin{equation}
\mathbb{E}_{\widetilde{\mu }}\left[ \left( \sum_{i=1}^{n}\widetilde{f}%
_{i}(X)(X_{i}-\widehat{X}_{i})\right) ^{2}\right] =o(n^{2}).
\label{eq-new-ex-f-tilt-dif-x}
\end{equation}%
We defer their proofs to later place, and first show how to finish the proof
with them. By (\ref{eq-new-ex-dif-f-tilt}) and (\ref{eq-new-ex-f-tilt-dif-x}%
) we see that there exists $\sigma _{n}\rightarrow 0$ such that $\mathbb{P}_{%
\widetilde{\mu }}(\Omega _{n})\geq \frac{1}{2}$, where%
\begin{equation*}
\Omega _{n}:=\{x\in \text{supp}(\mu ):\left\vert \widetilde{f}(x)-\widetilde{%
f}(\widehat{x})\right\vert ,\left\vert \sum_{i=1}^{n}\widetilde{f}%
_{i}(x)(x_{i}-\widehat{x}_{i})\right\vert \leq \sigma _{n}n\}.
\end{equation*}%
Given any $\epsilon >0$, by \cite[Lemma 3.4]{AS} it is not hard to see that
we can construct a $\mathcal{D}(\epsilon )$ such that $\log \left\vert 
\mathcal{D}(\epsilon )\right\vert =o(n)$. For each $d\in \mathcal{D}%
(\epsilon )$, we consider%
\begin{equation*}
E_{d}:=\{x\in \text{supp}(\mu ):\sum_{i=1}^{n}\left\Vert
f_{i}(x)-d_{i}\right\Vert ^{2}\leq n\epsilon ^{2}\}\cap \Omega _{n}.
\end{equation*}%
If $E_{d}$ is not empty, we pick one element $z_{d}\in E_{d}$ and fix the
choice. Consider%
\begin{equation*}
\widetilde{\mathcal{D}}(\epsilon ):=\{z_{d}:d\in \mathcal{D}(\epsilon )\text{%
, }E_{d}\neq \varnothing \}.
\end{equation*}%
Then for any $x\in \Omega _{n}$, recalling the definition of $d^{x}$ (\ref%
{def_d}), we can find $y_{x}:=z_{d^{x}}\in \widetilde{\mathcal{D}}(\epsilon
) $, such that by the triangle inequality%
\begin{equation}
\sum_{i=1}^{n}\left\Vert f_{i}(x)-f_{i}(y_{x})\right\Vert ^{2}\leq
\sum_{i=1}^{n}\left\Vert f_{i}(x)-d_{i}^{x}\right\Vert
^{2}+\sum_{i=1}^{n}\left\Vert d_{i}^{x}-f_{i}(z_{d^{x}})\right\Vert ^{2}\leq
2n\epsilon ^{2}.  \label{eq-ex-new-covering-D-tilt}
\end{equation}%
Obviously $|\widetilde{\mathcal{D}}(\epsilon )|\leq \left\vert \mathcal{D}%
(\epsilon )\right\vert $ by the construction of $\widetilde{\mathcal{D}}%
(\epsilon )$, and thus%
\begin{equation}
\log \left\vert \widetilde{\mathcal{D}}(\epsilon )\right\vert =o(n).
\label{eq-new-ex-covering-condition}
\end{equation}%
Let $\widehat{y_{x}}=((\widehat{y_{x}})_{1},\ldots ,(\widehat{y_{x}})_{n})$,
where 
\begin{equation*}
\left( \widehat{y_{x}}\right) _{i}=\frac{\mathbb{E}_{\mu
_{i}}[x_{i}e^{\sum_{j\neq i}A_{n}(i,j)x_{i}^{T}J\left( y_{x}\right)
_{j}+x_{i}^{T}h}]}{\mathbb{E}_{\mu _{i}}[e^{\sum_{j\neq
i}A_{n}(i,j)x_{i}^{T}J\left( y_{x}\right) _{j}+x_{i}^{T}h}]}=\frac{\mathbb{E}%
_{\mu _{i}}[x_{i}e^{f_{i}(y_{x})(x_{i})}]}{\mathbb{E}_{\mu
_{i}}[e^{f_{i}(y_{x})(x_{i})}]}.
\end{equation*}%
Next we do the following approximation for $x\in \Omega _{n}$%
\begin{equation*}
f(x)\approx f(\widehat{y_{x}})-\sum_{i=1}^{n}D(\nu _{i}^{\widehat{y_{x}}%
}\parallel \mu _{i})+\sum_{i=1}^{n}\log \frac{d\nu _{i}^{\widehat{y_{x}}}}{%
d\mu _{i}}(x_{i}).
\end{equation*}%
More precisely, we show that for any $x\in \Omega _{n}$ 
\begin{equation}
\left\vert f(x)-\left( f(\widehat{y_{x}})-\sum_{i=1}^{n}D(\nu _{i}^{\widehat{%
y_{x}}}\parallel \mu _{i})+\sum_{i=1}^{n}\log \frac{d\nu _{i}^{\widehat{y_{x}%
}}}{d\mu _{i}}(x_{i})\right) \right\vert \leq 2\sigma _{n}n+2\sqrt{2N}%
n\epsilon .  \label{eq-new-ex-final-approximation}
\end{equation}%
If (\ref{eq-new-ex-final-approximation}) holds, then by (\ref%
{eq-new-ex-covering-condition}) and the same method as Section \ref%
{Section-upper-bound-final}, we see that {\small 
\begin{equation*}
\log \int_{W_{1}^{n}}e^{f(x)}d\mu (x)\leq \max_{\nu \ll \mu ,\nu =\nu
_{1}\times \nu _{2}\times \ldots \times \nu _{n}}\left\{ f(m(\nu
))-\sum_{i=1}^{n}D(\nu _{i}\parallel \mu _{i})\right\} +2\sigma _{n}n+2\sqrt{%
2N}n\epsilon +\log 2+\log \left\vert \mathcal{D}(\epsilon )\right\vert .
\end{equation*}%
} Dividing both sides by $n$, and noting the fact that $\epsilon $ is
arbitrary, we complete the proof by letting $\epsilon \rightarrow 0$.

Now we show (\ref{eq-new-ex-final-approximation}). 
Comparing the above equality with (\ref{v-exp-form}) in Proposition \ref%
{measure_v}, by (\ref{distribution_2}) and (\ref{distribution_4}) we see
that for any $z\in \mathbb{R}^{d}$%
\begin{equation*}
\log \frac{d\nu _{i}^{\widehat{y_{x}}}}{d\mu _{i}}(z)-D(\nu _{i}^{\widehat{%
y_{x}}}\parallel \mu _{i})=f_{i}(y_{x})(z-\left( \widehat{y_{x}}\right)
_{i}).
\end{equation*}%
Therefore we have{\small 
\begin{eqnarray}
&&\left\vert f(x)-\left( f(\widehat{y_{x}})-\sum_{i=1}^{n}D(\nu _{i}^{%
\widehat{y_{x}}}\parallel \mu _{i})+\sum_{i=1}^{n}\log \frac{d\nu _{i}^{%
\widehat{y_{x}}}}{d\mu _{i}}(x_{i})\right) \right\vert  \notag \\
&\leq &\left\vert \widetilde{f}(x)-\widetilde{f}(y_{x})\right\vert
+\left\vert \widetilde{f}(y_{x})-\widetilde{f}(\widehat{y_{x}})\right\vert
+\left\vert \sum_{i=1}^{n}\widetilde{f}_{i}(y_{x})(x_{i}-\left( y_{x}\right)
_{i})\right\vert +\left\vert \sum_{i=1}^{n}\widetilde{f}_{i}(y_{x})(\left(
y_{x}\right) _{i}-\left( \widehat{y_{x}}\right) _{i})\right\vert ,
\label{eq-new-ex-final-approximation-decompose}
\end{eqnarray}%
} where in the last line we replace $f$ by $\widetilde{f}$ since it is easy
to check that all the terms involving $h$ cancel in the first line.
Recalling that $y_{x}\in \Omega _{n}$, by the definition of $\Omega _{n}$ we
see that%
\begin{equation}
\left\vert \widetilde{f}(y_{x})-\widetilde{f}(\widehat{y_{x}})\right\vert
+\left\vert \sum_{i=1}^{n}\widetilde{f}_{i}(y_{x})(\left( y_{x}\right)
_{i}-\left( \widehat{y_{x}}\right) _{i})\right\vert \leq 2 \sigma_n n.
\label{eq-new-ex-final-approximation-eq-1}
\end{equation}%
Thus it remains to bound $\left\vert \widetilde{f}(x)-\widetilde{f}%
(y_{x})\right\vert $ and $\left\vert \sum_{i=1}^{n}\widetilde{f}%
_{i}(y_{x})(x_{i}-\left( y_{x}\right) _{i})\right\vert $. For $\left\vert 
\widetilde{f}(x)-\widetilde{f}(y_{x})\right\vert $, we have%
\begin{eqnarray}
\left\vert \widetilde{f}(x)-\widetilde{f}(y_{x})\right\vert &=&\left\vert 
\frac{1}{2}\sum_{i,j=1}^{n}A_{n}(i,j)\left( x_{i}^{T}J\left(
x_{j}-(y_{x})_{j}\right) +\left( x_{i}^{T}-(y_{x})_{i}^{T}\right)
J(y_{x})_{j}\right) \right\vert  \notag \\
&\leq &\left\vert \frac{1}{2}\sum_{i}\left( \widetilde{f}_{i}(x)-\widetilde{f%
}_{i}(y_{x})\right) \left( x_{i}\right) \right\vert +\left\vert \frac{1}{2}%
\sum_{j}\left( \widetilde{f}_{j}(x)-\widetilde{f}_{j}(y_{x})\right) \left(
y_{x}\right) \right\vert  \notag \\
&\leq &\sqrt{N}\sqrt{n}\left( \sum_{i}\left\Vert \widetilde{f}_{i}(x)-%
\widetilde{f}_{i}(y_{x})\right\Vert ^{2}\right) ^{1/2}\leq \sqrt{2N}%
n\epsilon ,  \label{eq-new-ex-diff-f-x-f-y-x}
\end{eqnarray}%
where the last inequality is by (\ref{eq-ex-new-covering-D-tilt}). For $%
\left\vert \sum_{i=1}^{n}\widetilde{f}_{i}(y_{x})(x_{i}-\left( y_{x}\right)
_{i})\right\vert $, note that%
\begin{equation*}
\left\vert \sum_{i=1}^{n}\widetilde{f}_{i}(y_{x})(x_{i}-\left( y_{x}\right)
_{i})\right\vert =\left\vert \sum_{i,j=1}^{n}A_{n}(i,j)\left(
x_{i}^{T}-(y_{x})_{i}^{T}\right) J(y_{x})_{j}\right\vert =\left\vert
\sum_{j}\left( \widetilde{f}_{j}(x)-\widetilde{f}_{j}(y_{x})\right) \left(
y_{x}\right) \right\vert ,
\end{equation*}%
which we already bound in (\ref{eq-new-ex-diff-f-x-f-y-x}). Thus combining (%
\ref{eq-new-ex-final-approximation-decompose}), (\ref%
{eq-new-ex-final-approximation-eq-1}) and (\ref{eq-new-ex-diff-f-x-f-y-x}),
we get (\ref{eq-new-ex-final-approximation}).

In the following we prove (\ref{eq-new-ex-dif-f-tilt}) and (\ref%
{eq-new-ex-f-tilt-dif-x}). We need the following two inequalities. By (\ref%
{ex-new-condition}) and (\ref{eq-new-ex-a-b-c}), there exists $\eta _{n}=o(n)
$, such that for any $w_{1},w_{2},\ldots ,w_{n}\in W_{1}^{n}$%
\begin{equation}
\sum_{i=1}^{n}\left\Vert \widetilde{f_{i}}(w_{i})\right\Vert ^{2}\leq
C(n+\sum_{i=1}^{n}(\sum_{j=1}^{n}\left\vert A_{n}(i,j)\right\vert )^{2})\leq
C(n+n\sum_{i,j=1}^{n}\left\vert A_{n}(i,j)\right\vert ^{2})\leq
C(n+ntr\left( A_{n}^{2}\right) )=\eta _{n}n^{2},  \label{eq-new-ex-sig-f-i-2}
\end{equation}%
and by (\ref{ex-new-condition}) again, there exists $M_{n}=O(1)$ such that
for all $x\in W_{1}^{n}$ 
\begin{equation}
\sum_{i=1}^{n}\left\Vert \widetilde{f_{i}}(x)\right\Vert \leq
\sum_{i=1}^{n}\left\Vert \sum_{j\neq i}A_{n}(i,j)Jx_{j}\right\Vert _{\infty
}\leq \sqrt{N}\left\Vert J\right\Vert _{\infty }\sup_{x\in \lbrack
0,1]^{n}}\sum_{i\in \lbrack n]}\left\vert \sum_{j\in \lbrack
n]}A_{n}(i,j)x_{j}\right\vert \leq M_{n}n.  \label{eq-new-ex-sig-f-i}
\end{equation}

\begin{proof}[Proof of (\protect\ref{eq-new-ex-dif-f-tilt})]
If we directly apply Proposition \ref{Prop-diff-g} for $\widetilde{f} $,
then we can see that only $\sum_{i,j=1}^{n}b_{i}b_{j}c_{ij}$ is of wrong
order, which comes from (\ref{upper_6}). Let $\theta =(0,0,\ldots ,0)$ in $%
\mathbb{R}^{N}$. Here we show 
\begin{equation}
\sum_{i=1}^{n}\mathbb{E}_{\widetilde{\mu }}\left[ u_{i}(t,X)(X_{i}-\widehat{X%
}_{i})\left( h(X)-h(X_{\theta _{i}}^{(i)})\right) \right] = o(n^2),
\label{eq-4.40-asist}
\end{equation}
which gives (\ref{eq-new-ex-dif-f-tilt}). Recall that%
\begin{equation*}
h(x)=\widetilde{f}(x)-\widetilde{f}(\widehat{x}),\text{ }u_{i}(t,x)=%
\widetilde{f}_{i}(tx+(1-t)\widehat{x}).
\end{equation*}%
By arrangement we have{\small 
\begin{eqnarray}
h(x)-h(x_{\theta _{i}}^{(i)}) &=&\widetilde{f}(x)-\widetilde{f}(x_{\theta
_{i}}^{(i)})+\frac{1}{2}\sum_{l,j}A_{n}(l,j)(\widehat{x}_{l})^{T}J\left( 
\widehat{x}_{j}-\widehat{x_{\theta _{i}}^{(i)}}_{j}\right) +\frac{1}{2}%
\sum_{l,j}A_{n}(l,j)(\widehat{x}_{l}-\widehat{x_{\theta _{i}}^{(i)}}%
_{l})^{T}J\widehat{x_{\theta _{i}}^{(i)}}_{j}  \notag \\
&=&\widetilde{f}(x)-\widetilde{f}(x_{\theta _{i}}^{(i)})+\frac{1}{2}\sum_{j}%
\widetilde{f}_{j}(\widehat{x})\left( \widehat{x}_{j}-\widehat{x_{\theta
_{i}}^{(i)}}_{j}\right) +\frac{1}{2}\sum_{l}\widetilde{f}_{l}(\widehat{%
x_{\theta _{i}}^{(i)}})\left( \widehat{x}_{l}-\widehat{x_{\theta _{i}}^{(i)}}%
_{l}\right) .  \label{eq-new-ex-decompose-h}
\end{eqnarray}%
%
} We also have{\small 
\begin{equation}
\widetilde{f}(x)-\widetilde{f}(x_{\theta _{i}}^{(i)})=\frac{1}{2}%
\sum_{l,j}A_{n}(l,j)(x_{l})^{T}Jx_{j}-\frac{1}{2}\sum_{l,j}A_{n}(l,j)((x_{%
\theta _{i}}^{(i)})_{l})^{T}Jx_{j}^{(i)}=\sum_{j\neq
i}A_{n}(i,j)(x_{i})^{T}Jx_{j}=\widetilde{f}_{i}(x)(x_{i}).
\label{eq-new-ex-dif-f-tilt-x}
\end{equation}
} By Cauchy inequality we have%
\begin{eqnarray*}
&&\left\vert \sum_{i=1}^{n}\mathbb{E}\left[ \left( \left\Vert \widetilde{f}%
_{i}(X)\right\Vert +\left\Vert \widetilde{f}_{i}(\widehat{X})\right\Vert
\right) \left( \left\Vert \widetilde{f}_{i}(X)\right\Vert \right) \right]
\right\vert \\
&\leq &\left(\mathbb{E} \sum_{i=1}^{n}\left\Vert \widetilde{f}%
_{i}(X)\right\Vert ^{2}\right)+\left( \mathbb{E} \sum_{i=1}^{n}\left\Vert 
\widetilde{f}_{i}(\widehat{X})\right\Vert ^{2}\right) ^{1/2}\left(\mathbb{E}
\sum_{i=1}^{n}\left\Vert \widetilde{f}_{i}(X)\right\Vert ^{2}\right) ^{1/2}
= o( n^{2}),
\end{eqnarray*}%
where the last line is by (\ref{eq-new-ex-sig-f-i-2}). Thus with (\ref%
{eq-new-ex-dif-f-tilt-x}) we see that%
\begin{eqnarray}
&&\left\vert \sum_{i=1}^{n}\mathbb{E}\left[ u_{i}(t,X)(X_{i}-\widehat{X}%
_{i})\left( \widetilde{f}(X)-\widetilde{f}(X^{(i)})\right) \right]
\right\vert  \notag \\
&=&\left\vert \sum_{i=1}^{n}\mathbb{E}\left[ \left( t\widetilde{f}%
_{i}(X)+(1-t)\widetilde{f}_{i}(\widehat{X})\right) (X_{i}-\widehat{X}%
_{i})\left( \widetilde{f}_{i}(X)(X_{i})\right) \right] \right\vert  \notag \\
&\leq &N\left\vert \sum_{i=1}^{n}\mathbb{E}\left[ \left( \left\Vert 
\widetilde{f}_{i}(X)\right\Vert +\left\Vert \widetilde{f}_{i}(\widehat{X}%
)\right\Vert \right) \left( \left\Vert \widetilde{f}_{i}(X)\right\Vert
\right) \right] \right\vert =o(n^{2}).  \label{eq-ex-new-add-319-2}
\end{eqnarray}%
Next we define $\Delta _{j,i}(x):=\widehat{x}_{j}-\widehat{x^{(i)}}_{j}$,
and then by (\ref{bound-derivative-x-hat}), (\ref{eq-new-extra-condition})
and (\ref{eq-new-ex-a-b-c}) we see that for any $x \in W_1^n$,%
\begin{equation}
\max_{i,j}\left\vert \Delta _{j,i}(x)\right\vert \leq \sqrt{N}%
\max_{i,j}\left\vert A_{n}(i,j)\right\vert =o(1)\text{.}
\label{eq-new-ex-delta-bound}
\end{equation}%
By (\ref{eq-new-ex-sig-f-i}), for any $x \in W_1^n$,%
\begin{equation}
\sum_{i,j=1}^{n}\left\Vert \widetilde{f}_{j}(\widehat{x})\right\Vert \left(
\left\Vert \widetilde{f}_{i}(x)\right\Vert +\left\Vert \widetilde{f}_{i}(%
\widehat{x})\right\Vert \right) \leq \left( \sum_{i=1}^{n}\left\Vert 
\widetilde{f}_{j}(\widehat{x})\right\Vert \right) \left(
\sum_{i=1}^{n}\left\Vert \widetilde{f}_{i}(x)\right\Vert
+\sum_{i=1}^{n}\left\Vert \widetilde{f}_{i}(\widehat{x})\right\Vert \right)
\leq 2 M_n^2 n^{2}.  \label{eq-ex-new-add-319-1}
\end{equation}%
Noting that $|\widetilde{f}_{j}(\widehat{x})(\Delta
_{j,i}(x)^{T})(u_{i}(t,x)(x_{i}-\widehat{x}_{i}))|\leq ||\widetilde{f}_{j}(%
\widehat{x})||(||\widetilde{f}_{i}(x)||+||\widetilde{f}_{i}(\widehat{x})||)$%
, with (\ref{eq-new-ex-delta-bound}) and (\ref{eq-ex-new-add-319-1}) we see
that%
\begin{eqnarray}
&&\left\vert \sum_{i=1}^{n}\mathbb{E}\left[ u_{i}(t,X)(X_{i}-\widehat{X}%
_{i})\left( \frac{1}{2}\sum_{j}\widetilde{f}_{j}(\widehat{X})\left( \widehat{%
X}_{j}-\widehat{X^{(i)}}_{j}\right) \right) \right] \right\vert  \notag \\
&=&\left\vert \frac{1}{2}\mathbb{E}\left[ \sum_{i,j=1}^{n}\widetilde{f}_{j}(%
\widehat{X})\left( \Delta _{j,i}(X)^{T}\right) \left( u_{i}(t,X)(X_{i}-%
\widehat{X}_{i})\right) \right] \right\vert =o(n^{2}).
\label{eq-new-ex-extra-lemma-1}
\end{eqnarray}%
Similarly we can show that%
\begin{equation}
\left\vert \sum_{i=1}^{n}\mathbb{E}\left[ u_{i}(t,X)(X_{i}-\widehat{X}%
_{i})\left( \frac{1}{2}\sum_{l}\widetilde{f}_{j}(\widehat{X^{(i)}})\left( 
\widehat{X}_{l}-\widehat{X^{(i)}}_{l}\right) \right) \right] \right\vert
=o(n^{2}),  \label{eq-new-ex-extra-lemma-3}
\end{equation}%
and thus with (\ref{eq-ex-new-add-319-2}), (\ref{eq-new-ex-extra-lemma-1}), (%
\ref{eq-new-ex-extra-lemma-3}) and (\ref{eq-new-ex-decompose-h}), we get (%
\ref{eq-4.40-asist}) and finish the proof.
\end{proof}

\begin{proof}[Proof of (\protect\ref{eq-new-ex-f-tilt-dif-x})]
Denote%
\begin{equation*}
G(x):=\sum_{i=1}^{n}\widetilde{f}_{i}(x)(x_{i}-\widehat{x}_{i}).
\end{equation*}%
Then by the definition of $\widehat{X}_{i}$ we have%
\begin{equation*}
\mathbb{E}_{\widetilde{\mu }}\left[ G(X^{(i)})\widetilde{f}%
_{i}(X^{(i)})(X_{i}-\widehat{X}_{i})\right] =0.
\end{equation*}%
Noting that $\widetilde{f}_{i}(X^{(i)})=\widetilde{f}_{i}(X)$, it is enough
to show that%
\begin{equation}
\mathbb{E}_{\widetilde{\mu }}\left[ \sum_{i=1}^{n}\widetilde{f}%
_{i}(X^{(i)})(X_{i}-\widehat{X}_{i})\left( G(X)-G(X^{(i)})\right) \right]
=o(n^{2}).  \label{eq-ex-new-add-319-3}
\end{equation}%
After some algebra we have%
\begin{eqnarray}
G(X)-G(X^{(i)}) &=&\sum_{j=1}^{n}\sum_{l=1}^{n}A_{n}(l,j)\left(
X_{l}^{T}J(X_{j}-\widehat{X}_{j})-\left( X_{l}^{(i)}\right)
^{T}J(X_{j}^{(i)}-\widehat{X^{(i)}}_{j})\right)  \notag \\
&=&\sum_{j\neq i}^{n}\widetilde{f}_{j}(X)\left( \widehat{X^{(i)}}_{j}-%
\widehat{X}_{j}\right) +\left( 2\widetilde{f}_{i}(X)-\widetilde{f}_{i}(%
\widehat{X^{(i)}})\right) (X_{i}).  \label{eq-new-ex-decompose-dif-g}
\end{eqnarray}%
By the definition of $\Delta _{j,i}$, we have%
\begin{eqnarray}
&&\mathbb{E}_{\widetilde{\mu }}\left[ \left\vert \sum_{i=1}^{n}\widetilde{f}%
_{i}(X^{(i)})(X_{i}-\widehat{X}_{i})\left( \sum_{j\neq i}^{n}\widetilde{f}%
_{j}(X)\left( \widehat{X}_{j}-\widehat{X^{(i)}}_{j}\right) \right)
\right\vert \right]  \notag \\
&\leq & \mathbb{E}_{\widetilde{\mu }} \left[ \max_{i,j}\left\Vert \Delta
_{j,i}(X)\right\Vert \cdot \sqrt{N}\left( \sum_{i=1}^{n}\left\Vert 
\widetilde{f}_{i}(X^{(i)})\right\Vert \right) \left(
\sum_{j=1}^{n}\left\Vert \widetilde{f}_{j}(X)\right\Vert \right) \right]
=o(n^{2}).  \label{eq-new-ex-extra-lemma-4}
\end{eqnarray}%
Also we have{\small 
\begin{align}
\mathbb{E}_{\widetilde{\mu }}\left[ \left\vert \sum_{i=1}^{n}\widetilde{f}%
_{i}(X^{(i)})(X_{i}-\widehat{X}_{i})\left( \left( 2\widetilde{f}_{i}(X)-%
\widetilde{f}_{i}(\widehat{X^{(i)}})\right) (X_{i})\right) \right\vert %
\right] \\
\leq \mathbb{E}_{\widetilde{\mu }}\left[N\sum_{i=1}^{n}\left\Vert \widetilde{%
f}_{i}(X^{(i)})\right\Vert \left( 2\left\Vert \widetilde{f}%
_{i}(X)\right\Vert +\left\Vert \widetilde{f}_{i}(\widehat{X})\right\Vert
\right)\right] =o(n^{2}),  \label{eq-new-ex-extra-lemma-5}
\end{align}%
} where the last line is by Cauchy inequality and (\ref{eq-new-ex-sig-f-i-2}%
). Combining (\ref{eq-ex-new-add-319-3}), (\ref{eq-new-ex-decompose-dif-g}),
(\ref{eq-new-ex-extra-lemma-4}) and (\ref{eq-new-ex-extra-lemma-5}), we
finish the proof.
\end{proof}

\section*{Acknowledgements}

The author thank Sourav Chatterjee for proposing this research direction.
The author also thank Sourav Chatterjee, Amir Dembo, Sumit Mukherjee and
Yufei Zhao for helpful discussions. The author is grateful to two anonymous
referees for their detailed comments and suggestions, which have improved
the quality of this paper. 
\bibliographystyle{plain}
\bibliography{GNLD}

\end{document}